\def\tank#1{\protected@xdef\@thanks{\@thanks
        \protect\footnotetext[0]{#1}}}
\def\bigfoot{

    \@footnotetext}
\newcommand{\ba}{\begin{eqnarray}}
\newcommand{\ea}{\end{eqnarray}}
\newtheorem{thm}{Theorem}[section]
\newtheorem{prop}{Proposition}[section]
\newtheorem{corollary}{Corollary}[section]
\newtheorem{lem}{Lemma}[section]
\newtheorem{defi}{Definition}[section]
\newtheorem{rmk}{Remark}[section]
\newtheorem{ass}{Assumption}[section]
\newtheorem{exmp}{Example}[section]
\numberwithin{equation}{section}
\newenvironment{proof}{Proof}{\hfill $\Box$}
\def\RR{\mathbb{R}}
\def\PP{\mathbb{P}}
\def\FF{\mathbb{F}}
\def\EE{\mathbb{E}}
\def\NN{\mathbb{N}}
\def\cA{{\mathcal A}}
\def\cB{{\mathcal B}}
\def\cE{{\mathcal E}}
\def\cF{{\mathcal F}}
\def\be{{\beta}}
\def\si{{\sigma}}
\def\et{{\eta}}
\def\Om{{\Omega}}
\def\al{{\alpha}}
\def\be{{\beta}}
\def\si{{\sigma}}
\def\eps{{\epsilon}}
\def\th{{\theta}}
\def\EE{\mathbb{ E}}
\def\th{{\theta}}
\def\si{{\sigma}}
\def\al{{\alpha}}
\def\Sgn{{\rm Sgn}}
\renewcommand{\d}{d}
\begin{document}

\title{\Large \bf Irreducibility and ergodicity of SPDEs driven by pure jump noise}
\date{}

\author{{Jian Wang}$^1$\footnote{E-mail:wg1995@mail.ustc.edu.cn}~~~{Hao Yang}$^2$\footnote{E-mail:yanghao@hfut.edu.cn}~~~ {Jianliang Zhai}$^3$\footnote{E-mail:zhaijl@ustc.edu.cn}~~~ {Tusheng Zhang}$^{4}$\footnote{E-mail:Tusheng.Zhang@manchester.ac.uk}
\\
  \small  1. School of Mathematics,  Hangzhou Normal University, Hangzhou 311121, China,\\
 \small 2. School of Mathematics, Hefei University of Technology, Hefei, Anhui 230009, China. \\
 \small  3. School of Mathematical Sciences,
 \small University of Science and Technology of China,\\
 \small Hefei, Anhui 230026, China.\\
\small 4. Department of Mathematics, University of Manchester, \\
\small Oxford Road, Manchester, M13 9PL, UK.
}

\maketitle

\begin{center}
\begin{minipage}{140mm}
{\bf Abstract:}
The irreducibility is fundamental for the study of ergodicity of stochastic dynamical systems.
The existing methods on the irreducibility of stochastic partial differential equations (SPDEs) and stochastic differential equations (SDEs) driven by  pure jump noise  are basically along the same lines as that for the Gaussian case, which are not particularly suitable for jump noise. As a result, restrictive conditions are usually placed on the driving jump noise. Basically the driving noises are additive type and more or less in the class of stable processes.
\vskip 0.2cm

In this paper, we develop a new and effective method to obtain  the irreducibility of SPDEs and SDEs driven by multiplicative pure jump noise.
 The conditions placed on the coefficients and the driving noise are very mild, and in some sense they are necessary and sufficient.   As an application of our main results,
we remove all the restrictive conditions on the driving noises in the literature, and derive  new irreducibility results of a large class of equations driven
by pure jump noise, including SPDEs with locally monotone coefficients, SPDEs/SDEs with singular coefficients, nonlinear $\rm Schr\ddot{o}dinger$ equations etc. We emphasize that under our setting the driving noises could be compound Poisson processes, even allowed to be  infinite dimensional. As further applications of the main results, we obtain  the ergodicity  of multi-valued, singular stochastic evolution inclusions  such as   stochastic $1$-Laplacian
evolution(total variation flow), stochastic sign fast diffusion equation, stochastic minimal surface flow, stochastic curve shortening flow,  etc.

\vspace{3mm} {\bf Keywords:}
Irreducibility; pure jump noise; stochastic partial differential equations; ergodicity; locally monotone coefficients; singular coefficients;

\vspace{3mm} {\bf AMS Subject Classification (2020):}
60H15; 60G51; 37A25; 60H17.
\end{minipage}
\end{center}

\newpage
\tableofcontents

\renewcommand\baselinestretch{1.2}
\setlength{\baselineskip}{0.28in}
\section{Introduction and motivation}

Let $H$ be a topological space with Borel $\sigma$-field $\mathcal{B}(H)$, and let $\mathbb{X}:=\{X^x(t),t\geq0;x\in H\}$ be an $H$-valued  Markov process on some
probability space $(\Omega,\mathcal{F},\PP)$.
$\mathbb{X}$ is said to be  irreducible in $H$ if for each $t>0$ and $x\in H$
\begin{center}
$\PP(X^x(t)\in B)>0$ \quad for any non-empty open set $B$.
\end{center}
In this paper, we are concerned with  the irreducibility of stochastic partial differential equations (SPDEs) and stochastic differential equations (SDEs) driven by pure jump noise.

\vskip 0.2cm
The irreducibility is a fundamental property of stochastic dynamic systems.
The importance of the study of the irreducibility lies in  its relevance in the analysis of the ergodicity of  Markov processes.
The uniqueness of the invariant measures/ergodicity is usually obtained by proving irreducibility and the strong Feller property, or the asymptotic strong Feller property, or the $e$-property; see \cite{DZ 1996, D, DMT 1995, FM 1995,FGRT 2015,GT 2016, Hairer M 2006,Kapica,KPS 2010, PZ,T 2020}.
Irreducibility  also plays an indispensable role in establishing large deviations of the occupation measures of Markov processes, we refer the reader to  \cite{ JNPS 2015,JNPS 2018,JNPS 2021,MN 2018,NPX 2022, W}; and it also plays an important role in the study of the recurrence
of Markov processes; see \cite{DZ 1996}.

The study of the irreducibility of stochastic dynamical systems driven by Gaussian noise has a long history, and there is a large amount of literature devoted to this topic; see, for instance, the classical works \cite{D,PZ}, the books \cite{DZ 1996,DaPrato2004}, and \cite{EM2001,Flandoli 1997,FM 1995,PZ, Z}.
To obtain the irreducibility for stochastic equations driven by Gaussian noise, one usually needs to solve a control problem.
In doing so, three ingredients  play very important role:
the (approximate) controllability of the associated PDEs, Girsanov's transformation of  Wiener processes, and the support of  Wiener processes/stochastic convolutions  on  path spaces.

However, things become quite different when the driving noises are pure jump processes.
   Compared with the case of the Gaussian driving noise, there are few results on the irreducibility of the case of the  pure jump driving noise, because the systems
   behave drastically differently due to the appearance of jumps.
The existing methods on the irreducibility of the dynamical system driven by jump noise  are basically along the same lines as that of the Gaussian case. They heavily rely on the fact that the driving noises are additive type and  more or less in the class of stable processes. The use of such methods to deal with the case of other types of additive pure jump noises appears to be unclear, let alone the case of multiplicative noises. Actually the methods and techniques available for dynamical systems driven by Gaussian  noise are not well suited  for investigating the irreducibility of systems driven by jump type noise for two main reasons.
One is that there exist very few results on the support of the pure jump L\'{e}vy processes/the
stochastic convolutions on path spaces. Due to the discontinuity of trajectories, the characterization of the support of the pure jump processes is not as satisfactory as in the case of  Gaussian noise. The other is that Girsanov's transformation of the pure jump L\'{e}vy process is much less effective than that of Gaussian case, because the density of the Girsanov transform of a
 Poisson random measure is expressed in terms of  nonlinear invertible and predictable transformations, and is to
censore jumps or thin the size of jumps. So far, there is a lack of effective methods to obtain the irreducibility of
   stochastic equations driven by  pure jump noise. This strongly motivates the current paper.

\textbf{The existing results}

Now we mention the existing results on the  irreducibility of SPDEs  driven by pure jump noise.
To do this, we introduce the so-called cylindrical pure jump L\'evy processes defined by the orthogonal expansion
\begin{eqnarray}\label{eq Intro 1}
L(t)=\sum_{i}\beta_iL_i(t)e_i,\ \ t\geq0,
\end{eqnarray}
where $\{e_i\}$ is an orthonormal basis of a  separable Hilbert space $H$, $\{L_i\}$ are real valued i.i.d. pure jump L\'evy processes, and $\{\beta_i\}$ is a given sequence of non zero real numbers. 

The first paper dealing with the irreducibility of stochastic equations driven by  pure jump noise was
published in \cite{17}. The authors obtained the irreducibility of semilinear SPDEs with  Lipschitz coefficients. The driving noises they considered are the so-called cylindrical symmetric $\alpha$-stable processes, $\alpha\in(0,2)$, which have the form (\ref{eq Intro 1}) with  $\{L_i\}$ replaced by real valued i.i.d. symmetric $\alpha$-stable processes.
The authors in \cite{WXX} proved the irreducibility of stochastic real Ginzburg-Landau equation on torus $\mathbb{T}=\mathbb{R}\setminus \mathbb{Z}$ in $H:=\{h\in L^2(\mathbb{T}):\int_\mathbb{T}h(y)dy=0\}$ driven by cylindrical symmetric $\alpha$-stable processes with $\alpha\in(1,2)$; see \cite[Theorem 2.3]{WXX}. In the paper, the coefficients in (\ref{eq Intro 1}) are required to satisfy
(ii) on page 1182 of \cite{WXX}, i.e.,
\begin{eqnarray}\label{eq intro 3}
\alpha\in(1,2)
\end{eqnarray}
and
\begin{eqnarray}\label{eq intro 2}
C_1\gamma_i^{-\beta}\leq |\beta_i|\leq C_2\gamma_i^{-\beta}\text{ with }\beta>\frac{1}{2}+\frac{1}{2\alpha}\text{ for some positive constants }C_1\text{ and }C_2,
\end{eqnarray}
here $\{\gamma_i=4\pi^2|i|^2\}$ are the eigenvalues of the Laplace operator on $H$.
In \cite{WX} and \cite{DWX2020}, the authors established the irreducibility of stochastic reaction-diffusion equation and stochastic Burgers equation driven by the subordinated cylindrical Wiener process with a $\alpha/2$-stable subordinator, $\alpha\in(1,2)$, respectively. However, the restrictions as (\ref{eq intro 3}) and (\ref{eq intro 2}) on the driving noises  are
also required. 

In \cite{FHR 2016} the authors studied the irreducibility of some stochastic Hydrodynamical systems with bilinear term; see \cite[Theorem 3.5]{FHR 2016}. The driving noises $L$ they considered are also of the form (\ref{eq Intro 1}) and  satisfy
\begin{itemize}
  \item[(a)] The intensity measure $\mu$ of each component process $L_i$ satisfies that there exists a strictly monotone and $C^1$ function $q:(0,\infty)\rightarrow(0,\infty)$ such that
      \begin{eqnarray}
      &&\lim_{r\nearrow\infty}q(r)=0,\ \lim_{r\searrow0}q(r)=1,\text{ and }\mu(dz)=q(|z|)|z|^{-1-\theta}dz,\ \theta\in(0,2);\label{eq intro 4}\\
      &&\int_\mathbb{R}(1-q^{1/2}(|z|))^2\mu(dz)<\infty.\label{eq intro 5}
      \end{eqnarray}

  \item[(b)] There exist a certain $\epsilon\in(0,2)$ and $\vartheta\in[0,1/2)$ such that
        \begin{eqnarray}
      \sum_i(|\beta_i|+\beta_i^2\lambda_i^{-2\vartheta}+\beta_i^2\lambda_i^{\epsilon-1}+\beta_i^4\lambda_i^\epsilon)<\infty.\label{eq intro 6}
      \end{eqnarray}
Here $0<\lambda_1<\lambda_2<...$ are the eigenvalues associated with a positive self-adjoint operator appearing in the equations they studied.
\end{itemize}
Note that the driving noises $L$ could not cover cylindrical $\alpha$-stable processes.
As mentioned
in \cite{FHR 2016},  the case of stable driving noise is still an open problem. This problem is now solved as the application of the main result in this paper, see  Proposition \ref{Prop monotone} below.

Now we introduce the results on the  irreducibility of SDEs  driven by pure jump noise.
  In \cite{XZ}, the authors  studied the irreducibility of SDEs with singular coefficients driven by symmetric and rotationally invariant $\alpha$-stable processes with $\alpha\in(1,2)$.   This is the only paper to get the irreducibility of stochastic equations driven by multiplicative pure jump noise. In \cite{AP} the authors obtained the irreducibility of a class of multidimensional Ornstein-Uhlenbeck processes driven by additive pure jump noise $L$. $L$ is of the form $L(t)=L_1(t)+L_2(t)$,  where
 $L_1$ and $L_2$ are independent $d$-dimensional pure jump L\'evy processes, such that one of the following conditions is satisfied

 (1)  $L_1$ is a subordinate Brownian motion, and $L_2$ can be any pure jump L\'evy process or vanish;

 (2) $L_1$ is an anisotropic L\'evy process with independent symmetric one dimensional $\alpha$-stable components for $\alpha\in(0,2)$, and $L_2$ is a compound Poisson process.

 \textbf{Our contributions}

 Our main results are  Theorem \ref{thmmulti} and Theorem \ref{thmmulti-additive case}. In a few words, to get the irreducibility, we only need to impose the conditions under which the well posedness can be guaranteed and a nondegenerate condition on the intensity measure of the  driving L\'evy noise, i.e., Assumptions \ref{ass3} and \ref{assv} in Section \ref{Sec 2}  respectively for the case of the multiplicative noise and the case of  the additive noise.

 \textbf{(I)} The approach to prove the irreducibility we proposed
is completely different with the existing ones. Our approach gets rid of  solving the (approximate) controllability for the associated PDEs, does not need to establish the support of  L\'evy processes/stochastic convolutions  on  path spaces, and does not rely on Girsanov's transformation of  L\'evy processes. As a result, we removed all the restrictions placed on the driving noises in the literature; see Propositions \ref{Prop monotone} and \ref{prop 4.4} in this paper, and some details will be provided below.

\textbf{(II)}
We established new irreducibility results of a large class of equations driven
by  pure jump noise with mild requirements. For instance, Proposition \ref{Pro 4.3} below establishes the irreducibility of the nonlinear $\rm Schr\ddot{o}dinger$ equations, which covers both focusing and defocusing nonlinearity in the full subcritical
range of exponents. To the best of our knowledge, the corresponding results are not even known in the case of Gaussian driving noise.
The framework of Proposition \ref{Prop monotone} in Section 4
covers  SPDEs such as  stochastic porous medium equation, stochastic $p$-Laplace equation, stochastic fast diffusion equation,  stochastic 2D Navier-Stokes equation,   stochastic equations of non-Newtonian fluids,  and many other stochastic Hydrodynamical systems, most of which can not be covered by the existing results.

\textbf{(III)}
As a further application of the main result of our paper, combining with the $e$-property, we can obtain the uniqueness of invariant measures of the linear $\rm Schr\ddot{o}dinger$ equations and a class of multi-valued, singular stochastic evolution inclusions (see Proposition \ref{Pro 4.3} and Theorem \ref{thm ergo 1}). Examples include stochastic $1$-Laplacian
evolution(total variation flow), stochastic sign fast diffusion equation, stochastic minimal surface flow, stochastic curve shortening flow,  etc. It seems quite difficult to get these results with other means due to the lack of strong dissipativity of the equations.

\textbf{Comparison with the existing works}

Compared with the results in \cite{XZ}, Proposition \ref{prop 4.4} of this paper establishes the irreducibility for a class of SDE with singular coefficients driven by non-degenerate $\al$-stable-like L\'evy process with $\alpha\in(0,2)$. We stress that the study of the supercritical case $\alpha\in(0,1)$ is much harder and attracts a lot of attention.

  Except \cite{XZ}, Proposition \ref{Prop monotone} covers all of the other existing results. Furthermore the driving noises are required much weaker assumptions.  An example of the driving noises required in Proposition \ref{Prop monotone} is the form (\ref{eq Intro 1}) with
\begin{itemize}
  \item[(c)] {the intensity measure $\mu$ of each component process $L_i$ satisfies that there exist  $a\in S_\mu\cap (0,+\infty)$ and $b\in S_\mu\cap (-\infty,0)$ such that $a/b$ is an
irrational number;
      here $S_\mu$ is the support of $\mu$, that is, the set of $x\in\mathbb{R}$ such that $\mu(G)>0$ for any open set $G$ containing $x$;
      }

      \item[(d)] $\{\beta_i\}$ is a given sequence of non zero real numbers satisfying the conditions under which the well posedness can be proven.
\end{itemize}
We remark that  for (c) to hold, the measure $\mu$ is not necessary to be absolutely continuous with respect to the Lebesgue measure, and the driving noise could be compound Poisson processes.  The assumptions (c) and (d) remove technical assumptions appeared  in the existing results, such as (\ref{eq intro 3})-(\ref{eq intro 6}).




\textbf{We now describe the main idea of this paper.} Let $H$ be a separable Hilbert space, and let $\mathbb{X}:=\{X^x(t),t\geq0;x\in H\}$ be an $H$-valued
$\rm c\grave{a}dl\grave{a}g$ strong Markov process on some
probability space $(\Omega,\mathcal{F},\PP)$. For example, $X^x=(X^x(t),t\geq0)$ could be  the unique solutions to SPDEs/SDEs driven by pure jump L\'evy noise with initial data $x\in H$. For any $x,y\in H$, $T>0$ and $\kappa>0$, our aim is to  prove that
\begin{eqnarray}\label{In eq 01}
\mathbb{P}\Big(X^x(T)\in B(y,\kappa)\Big)>0.
\end{eqnarray}
Here, for any $h\in H$ and $l>0$, denote $B(h,l)=\{\hbar\in H: \|\hbar-h\|_H<l\}$.

To do this, we impose two main assumptions: Assumptions \ref{ass2} and \ref{ass3}.  Intuitively speaking, the first one is a weakly continuous assumption on $\mathbb{X}$ uniformly in the initial data.
The second one is a nondegenerate condition on the intensity measure of the  driving L\'evy noise,  which basically says that for any $\hbar,\overrightarrow{\hbar}\in H$, the neighbourhoods of $\overrightarrow{\hbar}$ can be reached with positive probability from $\hbar$  through a finite number of choosing jumps.

Applying Assumption \ref{ass2} to the given $y$ and $\kappa$, there exist $\epsilon_0:=\epsilon(y,\frac{\kappa}{2})\in(0,\frac{\kappa}{4})$ and $t_0:=t(y,\frac{\kappa}{2})>0$ such that for any $\hbar\in B(y,\epsilon_0)$,
\begin{eqnarray}\label{Eq idea 00}
\mathbb{P}\Big(\big\{X^\hbar(t)\in B(y,\frac{\kappa}{2}), \forall t\in[0,t_0]\big\}\Big)>0.
\end{eqnarray}
Therefore, set $T_0=T-\frac{t_0}{2}$, once we  prove that there exists $\widetilde{T}\in(T_0,T)$ such that
\begin{eqnarray}\label{In eq 03}
\mathbb{P}\big(X^x(\widetilde{T})\in B(y,\epsilon_0)\big)>0,
\end{eqnarray}
by the Markov property of $\mathbb{X}$, (\ref{In eq 01}) follow from (\ref{Eq idea 00}) and (\ref{In eq 03}), completing the proof.

We now explain the ideas of proving (\ref{In eq 03}). First, notice that there exists $\zeta\in H$ such that for any $\rho>0$
\begin{eqnarray}\label{In eq 02}
\PP\big(X^{x}(T_0) \in B(\zeta,\rho)\big) > 0.
\end{eqnarray}
By Assumption \ref{ass3}, $B(y,\epsilon_0)$ can be reached with positive probability from $\zeta$  through a finite number of choosing jumps.
Let $\sigma_i$ be the $i$-th jump time. One key step to obtain (\ref{In eq 03}) is to prove that there exist $\rho_0>0$, $\rho_1>0$, $q_1\in H$, and $T_1\in(T_0,T)$ such that
\begin{eqnarray}\label{In eq 05}
&&\PP\Big(\{X^{x}(T_0) \in B(\zeta,\rho_0)\}
\cap
\{X^{x}(t) \in B(\zeta,2\rho_0),\forall t\in[T_0,\sigma_1)\}\nonumber\\
&&\ \ \ \ \ \ \ \ \ \ \cap
\{X^{x}(\sigma_1)\in B(q_1,\frac{\rho_1}{2})\}
\cap
\{X^{x}(t) \in B(q_1,\rho_1),\forall t\in(\sigma_1,T_1]\}\Big)>0,
\end{eqnarray}
which implies that
\begin{eqnarray}\label{In eq 04}
\PP\Big(\{X^{x}(T_0) \in B(\zeta,\rho_0)\}
\cap
\{X^{x}(T_1) \in B(q_1,\rho_1)\}\Big)>0.
\end{eqnarray}
 To get (\ref{In eq 05}), the following claims will be used:

(C1) Assumption \ref{ass2}, (\ref{In eq 02}) and the Markov property of $\mathbb{X}$ imply that
$$\PP\Big(\{X^{x}(T_0) \in B(\zeta,\rho_0)\}
\cap
\{X^{x}(t) \in B(\zeta,2\rho_0),\forall t\in[T_0,\sigma_1)\}\Big)>0.$$

(C2) The first choosing jump  ensures that
 the neighbourhood of $q_1$, $B(q_1,\frac{\rho_1}{2})$, can be reached with positive probability from $B(\zeta,2\rho_0)$. \\
  To complete the proof of (\ref{In eq 05}), a further delicate argument is carried out, which requires an intricate cutoff procedure and employs  stopping time techniques, etc. The argument exploits the strong Markov property of $\mathbb{X}$, the fact that
  the jumps of the Poisson random measure on disjoint subsets are mutually independent, and the fact that with probability one, two independent L\'evy processes can not  jump simultaneously at any given moment, etc.  It also relies on carefully choosing moments and sizes of jumps of the driving noises.

 After getting (\ref{In eq 05}) and (\ref{In eq 04}), following a recursive procedure we are able to prove that there exist $\{q_i,i=1,2,...,n\}\subseteq H$, $\{\rho_i,i=1,2,...,n\}\subseteq (0,\infty)$ and $T_0<T_1<T_2<...<T_n<T$ such that
 \begin{eqnarray}\label{In eq 06}
\PP\Big(\{X^{x}(T_0) \in B(\zeta,\rho_0)\}
\cap_{i=1}^n
\{X^{x}(T_i) \in B(q_i,\rho_i)\}
\Big)>0.
\end{eqnarray}
Carefully choosing $B(q_n,\rho_n)\subset B(y,\epsilon_0)$, the above inequality implies that
$$
\PP\Big(\{X^{x}(T_n) \in B(y,\epsilon_0)\}\Big)
\geq
\PP\Big(\{X^{x}(T_n) \in B(q_n,\rho_n)\}\Big)>0.
$$
Therefore, (\ref{In eq 03}) holds, completing the proof.

 An important novelty of this article is that,  we find a nondegenerate condition on the intensity measure of the  driving L\'evy noises to prove the irreducibility.
A further novelty is that the main assumptions, Assumptions \ref{ass2} and \ref{ass3}, are imposed separately on the process $\mathbb{X}$ and the intensity measure of the  driving L\'evy noises. These two assumptions are basically independent of each other. Both of them are held for most of the applications.
Therefore, the approach we are proposing here is quite robust, and covers  SPDEs/SDEs with quite singular coefficients.

The paper is organized as follows. In Section 2, we will give the main framework and main results: Theorems \ref{thmmulti} and \ref{thmmulti-additive case}. Section 3 is devoted to the proof  of the main results. In Section 4, we provide applications to SDEs and SPDEs including many interesting physical models. Since  Assumptions \ref{ass2} and \ref{ass3} are basically independent of each other, Section 4 is divided into three parts: Subsection 4.1 presents examples of the additive driving noises  satisfying  Assumption \ref{assv} (the corresponding  Assumption \ref{ass3} in the setting  of the additive noise). Subsection 4.2 gives examples of the multiplicative driving noises  satisfying   Assumption \ref{ass3}. Subsections 4.3-4.5 are to provide examples of physical models  satisfying   Assumption \ref{ass2}. The irreducibility of many interesting physical models driven by pure jump L\'evy noise is established in Subsections 4.3-4.5. In Section 5, we provide interesting examples for which ergodicity can be established.

\section{Preliminaries and statements of the main results}\label{Sec 2}
In this section, we will introduce the framework and state the main results.  Let
\[
V \subset H \simeq H^* \subset V^*
\]
be a Gelfand triple, i.e., $\big(H, \langle \cdot, \cdot \rangle_H \big)$ is a separable Hilbert space and identified with its dual space $H^*$ by the Riesz isomorphism, $V$ is a reflexive Banach space that is continuously and densely embedded into $H$. If $_{V^*}\langle \cdot, \cdot \rangle_V$ denotes the dualization between $V$ and its dual space $V^*$, then it follows that
\[ _{V^*}\langle u, v \rangle_V = \langle u , v \rangle_H, \quad u \in H,\ v\in V.
\]
Let $(\Om, \cF, \FF,\PP)$, where $\FF = \{\cF_t\}_{t \geq 0}$, be a filtered probability space satisfying the usual conditions.

For a metric space $(X,d_X)$, the Borel $\si$-field on $X$ will be written as $\cB(X)$. For any $x\in X$ and $l\geq0$, denote $B(x,l)=\{y\in X: d_X(y,x)<l\}$ and $\overline{B(x,l)}=\{y\in X: d_X(y,x)\leq l\}$. If $I\subset\mathbb{R}$ is a time interval, we denote by $D(I,X)$ the space of all $\rm c\grave{a}dl\grave{a}g$ paths from $I$ to $X$.

Let $(Z, \cB(Z))$ be a metric space, and $\nu$ a given $\si$-finite  measure $\nu$ on it, that is, there exists $Z_n\in\mathcal{B}(Z), n\in\mathbb{N}$ such that $Z_n\uparrow Z$ and $\nu(Z_n)<\infty, \forall n\in\mathbb{N}$. Let $N: \cB(Z\times\RR^+) \times \Omega\rightarrow \bar{\NN} := \NN \cup \{0, \infty\}$ be a time homogeneous Poisson random measure on $(Z, \cB(Z) )$ with intensity measure $\nu$. For the existence of such Poisson random measure, we refer the reader to \cite{IW1989}.
We denote by $\tilde{N} (dz,dt) = N(dz,dt) - \nu(dz)dt$ the compensated Poisson random measure associated to $N$.

Now we consider the following SPDEs driven by pure jump noise:
\begin{eqnarray}\label{meq1}
  &&dX(t)= \cA (X(t)) dt +\int_{Z_1^c}\!\!\!\!\si(X(t-), z)\tilde{N}(dz,dt) + \int_{Z_1}\!\!\!\! \si(X(t-), z)N(dz,dt),\\
  &&X(0)=x,\nonumber
\end{eqnarray}
where  $\cA:V\rightarrow V^*$ and $\si: H\times Z \rightarrow H$ are Borel measurable mappings, and, for any $m\in\mathbb{N}$, $Z_m^c$ denotes the complement of $Z_m$ relative to $Z$.
\vskip 0.2cm
\begin{defi}
An $H$-valued $\rm c\grave{a}dl\grave{a}g$ $\FF$-adapted process $X$ is called a solution of (\ref{meq1}) if the following conditions are satisfied

(I) $X(t,\omega)\in V$ for $dt\otimes \PP$-almost all $(t,\omega)\in[0,\infty)\times\Omega$, where
$dt$ stands for the Lebesgue measures on $[0,\infty)$;

(II)  $\int_0^t|\cA(X(s))|_{V^*}ds+\int_0^t \int_{Z_1^c}|\si(X(s), z)|_H^2\nu(dz)ds
+
\int_0^t \int_{Z_1} |\si(X(s-), z)|_HN(dz, ds)<\infty, \ \forall t\geq0$, $\PP$-a.s.,

(III) $\PP$-a.s.
\ba\label{eq o 1}
X(t) &=& x + \int_0^t{\cA(X(s))ds}+\int_0^t \int_{Z_1^c}\si(X(s-), z)\tilde{N}(dz, ds) \notag \\
&&+ \int_0^t \int_{Z_1} \si(X(s-), z)N(dz, ds) , \quad t\geq 0,
\ea
as an equation in $V^*$.
\end{defi}
For notational convenience, we use the notation $X^x$ to indicate the solution of (\ref{meq1}) starting from $x$.
\vskip 0.2cm
Our starting point is the following assumption.
\begin{ass}\label{ass1}
For any $x \in H$, there exists a unique global solution $X^x=\big(X^x(t)\big)_{t\geq 0}$ to (\ref{meq1}) and  $\{X^x,x\in H\}$ forms a  strong Markov process.
\end{ass}
\vskip 0.2cm
\begin{rmk}
Our primary concern in this paper is the irreducibility and ergodicity of the solutions of SPDEs. We simply impose the assumption 2.1 which of course holds under many variants of standard assumptions on the coefficients. See the examples in Section 4 and 5.
\end{rmk}
\vskip 0.2cm
For any $m\in\mathbb{N}$, since $\nu(Z_m)<\infty$, (\ref{eq o 1}) can be rewritten as follows:
\begin{eqnarray*}
X^x(t) &=&
x + \int_0^t{\cA(X^x(s))ds}+\int_0^t \int_{Z_m^c}\si(X^x(s-), z)\tilde{N}(dz, ds)\notag \\
&&- \int_0^t \int_{Z_m\setminus Z_1}\si(X^x(s), z)\nu(dz)ds+ \int_0^t \int_{Z_m} \si(X^x(s-), z)N(dz, ds), \quad t\geq 0.
\end{eqnarray*}
Removing the big jumps in the above equation, consider
 \begin{eqnarray}\label{meq1-Zhai}
  &&dX_m(t)= \cA (X_m(t)) dt +\int_{Z_m^c}\!\!\!\!\si(X_m(t-), z)\tilde{N}(dz,dt) - \int_{Z_m\setminus Z_1}\!\!\!\! \si(X_m(t), z)\nu(dz)dt,\nonumber\\
  &&X_m(0)=x.
\end{eqnarray}
Set
$$
N(Z_m, t):=\int_0^t \int_{Z_m}N(dz, ds),\ t\geq0,
$$
and
\begin{eqnarray}\label{stop time 1}
\tau_m^i=\inf\{t\geq 0: N(Z_m, t)=i\},\ \ i\in\mathbb{N}.
\end{eqnarray}

\vskip 0.2cm
It is clear that $\{X^x(t),t\in[0,\tau_m^1)\}$ is a solution to (\ref{meq1-Zhai}) on $t\in[0,\tau_m^1)$.
The next result shows that the existence and uniqueness of the solution of equation (\ref{meq1-Zhai}) follows from that of equation (\ref{meq1}).
\vskip 0.2cm
\begin{prop}\label{Prop 2}
Fix an arbitrary $m\in\mathbb{N}$. For any $x\in H$, there exists a unique global solution $X^x_m=\big(X^x_m(t)\big)_{t\geq 0}$ to equation (\ref{meq1-Zhai}), which also has the strong Markov property.
\end{prop}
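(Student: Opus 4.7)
I would construct $X_m^x$ by an interlacing procedure, taking equation (\ref{meq1}) and Assumption \ref{ass1} as black boxes. The key observation is that on the random interval $[0,\tau_m^1)$, where no jump from $Z_m$ has yet occurred, the integral $\int_0^t\int_{Z_m}\sigma(X^x(s-),z)\,N(dz,ds)$ vanishes; hence the rewritten form of (\ref{meq1}) displayed just before (\ref{meq1-Zhai}) coincides with (\ref{meq1-Zhai}) itself on this interval. I therefore set $X_m^x(t):=X^x(t)$ for $t\in[0,\tau_m^1)$ and $X_m^x(\tau_m^1):=X^x(\tau_m^1-)$, which is well defined since $X^x$ is c\`adl\`ag. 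This already produces a solution of (\ref{meq1-Zhai}) on $[0,\tau_m^1]$ having no jump at $\tau_m^1$, as required by the absence of the $\int_{Z_m}N(dz,dt)$ term in (\ref{meq1-Zhai}).

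To extend past $\tau_m^1$, I would invoke the strong Markov structure of $\mathbb{X}$ together with the restart property of the Poisson random measure: the shifted measure $\hat N(dz,dt):=N(dz,d(t+\tau_m^1))$ is again a PRM on $(Z,\cB(Z))$ with intensity $\nu$, independent of $\cF_{\tau_m^1}$. Applying Assumption \ref{ass1} to a solution of (\ref{meq1}) driven by $\hat N$ with $\cF_{\tau_m^1}$-measurable initial datum $y_1:=X^x(\tau_m^1-)$ yields a unique process $\hat X^{y_1}$. I then define $X_m^x(t):=\hat X^{y_1}(t-\tau_m^1)$ on $[\tau_m^1,\tau_m^2)$, put $X_m^x(\tau_m^2):=\hat X^{y_1}((\tau_m^2-\tau_m^1)-)$, and iterate. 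Because $\nu(Z_m)<\infty$, the inter-arrival times $\tau_m^{i+1}-\tau_m^i$ are (conditionally on $\cF_{\tau_m^i}$) exponential of rate $\nu(Z_m)$, so $\tau_m^i\uparrow\infty$ almost surely, and the recursion delivers a global c\`adl\`ag $\FF$-adapted process $X_m^x$ satisfying (\ref{meq1-Zhai}) piecewise.

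For uniqueness I would argue inductively on the intervals between consecutive $\tau_m^i$. If $Y$ is any other $H$-valued c\`adl\`ag adapted solution of (\ref{meq1-Zhai}) with $Y(0)=x$, then on $[0,\tau_m^1)$ the process $Y$ also solves (\ref{meq1}) (the discarded big-jump integral is identically zero there), hence $Y\equiv X^x\equiv X_m^x$ on that interval by Assumption \ref{ass1}. The compensated integral in (\ref{meq1-Zhai}) is restricted to $Z_m^c$, so $Y$ cannot jump at $\tau_m^1$; this forces $Y(\tau_m^1)=Y(\tau_m^1-)=X^x(\tau_m^1-)=X_m^x(\tau_m^1)$. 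Applying the same comparison to the shifted equation on each $[\tau_m^i,\tau_m^{i+1})$ propagates the equality and yields $Y=X_m^x$ a.s.

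The strong Markov property of $\{X_m^x:x\in H\}$ is transferred from that of $\mathbb{X}$: by the uniqueness just established, $X_m^x(t)$ is a measurable function of $x$ and of the restriction of $N$ to $[0,t]$, so $P_t^m f(x):=\EE[f(X_m^x(t))]$ is Borel in $x$, and the semigroup identity follows by restarting at a stopping time and exploiting the independence of disjoint time-pieces of $N$ together with the strong Markov property of $\mathbb{X}$ on each excursion between $Z_m$-jumps. The main technical obstacle, and the place where most of the bookkeeping sits, is verifying measurability/adaptedness through the recursion and checking the strong Markov property at a general stopping time; once those are in place, everything reduces to a finite interlacing because $\nu(Z_m)<\infty$ prevents $\tau_m^i$ from exploding in finite time.
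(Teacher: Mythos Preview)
Your proposal is correct and follows essentially the same interlacing construction as the paper: build $X_m^x$ on $[0,\tau_m^1)$ by taking $X^x$, glue in the left limit at $\tau_m^1$, then restart the full equation (\ref{meq1}) from that point and repeat, using $\nu(Z_m)<\infty$ to ensure $\tau_m^i\uparrow\infty$. The paper writes the restarted pieces as processes $Y^i$ solving (\ref{meq1}) on $[\tau_m^i,\infty)$ with the original $N$, while you phrase the same thing via the shifted Poisson random measure $\hat N$; these are equivalent by the renewal property of $N$ at $\tau_m^i$. Your uniqueness and strong Markov discussions are in fact more detailed than the paper's, which simply asserts that uniqueness and the strong Markov property are inherited from $\{X^x,x\in H\}$.
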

\vskip 0.2cm

\begin{proof}
 Note that  equation (\ref{meq1-Zhai}) does not involve the jumps of the Poisson random measure $N$ in the set $Z_m$. Therefore, the solution $X^x_m$ is continuous at the jumping times $\tau_m^i,i\in\mathbb{N}$.  $X^x_m$ can be constructed as follows:

For $t\in[0,\tau_m^1)$, set $X^x_m(t)=X^x(t)$. Define $X^x_m(\tau_m^1)=\lim_{t\uparrow \tau_m^1}X^x(t)$. Let $Y^1(t),t\geq\tau_m^1$ be the solution of the following SPDE:
\begin{eqnarray*}
Y^1(t) &=&
 X^x_m(\tau_m^1)+ \int_{\tau_m^1}^t{\cA(Y^1(s))ds}+\int_{\tau_m^1}^t \int_{Z_m^c}\si(Y^1(s-), z)\tilde{N}(dz, ds)\notag \\
&&- \int_{\tau_m^1}^t \int_{Z_m\setminus Z_1}\si(Y^1(s), z)\nu(dz)ds+ \int_{\tau_m^1}^t \int_{Z_m} \si(Y^1(s-), z)N(dz, ds), \quad t\geq {\tau_m^1}.
\end{eqnarray*}
For $t\in[\tau_m^1,\tau_m^2)$, set $X^x_m(t)=Y^1(t)$, and define $X^x_m(\tau_m^2)=\lim_{t\uparrow \tau_m^2}Y^1(t)$. Recursively, let $Y^i(t),t\geq\tau_m^i$, for $i\geq 2$, denote  the solution of the following SPDE:
\begin{eqnarray*}
Y^i(t) &=&
 X^x_m(\tau_m^i)+ \int_{\tau_m^i}^t{\cA(Y^i(s))ds}+\int_{\tau_m^i}^t \int_{Z_m^c}\si(Y^i(s-), z)\tilde{N}(dz, ds)\notag \\
&&- \int_{\tau_m^i}^t \int_{Z_m\setminus Z_1}\si(Y^i(s), z)\nu(dz)ds+ \int_{\tau_m^i}^t \int_{Z_m} \si(Y^i(s-), z)N(dz, ds), \quad t\geq {\tau_m^i}.
\end{eqnarray*}
Set $X^x_m(t)=Y^i(t)$ for $t\in[\tau_m^i,\tau_m^{i+1})$,  and define $X^x_m(\tau_m^{i+1})=\lim_{t\uparrow \tau_m^{i+1}}Y^i(t)$. Since $\nu(Z_m)<\infty$, we have $\lim_{i\rightarrow\infty}\tau_m^i=\infty$, $\mathbb{P}$-a.s..  The solution $X^x_m=\big(X^x_m(t)\big)_{t\geq 0}$ is uniquely determined.

The strong Markov property of $\{X^x_m,x\in H\}$ is implied by that of $\{X^x,x\in H\}$.
\end{proof}

\vskip 0.2cm
We denote by $\mathcal{G}_m$ the $\mathbb{P}$-completion of $\sigma\{N(U\cap Z_m,t), U\in\mathcal{B}(Z),t\geq0\}$, and
$\mathcal{G}^c_m$ the $\mathbb{P}$-completion of $\sigma\{N(U\cap Z^c_m,t), U\in\mathcal{B}(Z),t\geq0\}$.
Then $\mathcal{G}_m$ and $\mathcal{G}^c_m$ are independent. Since $X_m^x\in \mathcal{G}^c_m$, and $\tau_m^1\in\mathcal{G}_m$, we have the following result.
\vskip 0.2cm
\begin{prop}\label{Prop 3}
$\sigma\{X_m^x\}$ and $\sigma\{\tau_m^1\}$ are independent.
\end{prop}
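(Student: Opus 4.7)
The plan is to verify the two measurability claims hinted at by the authors and then invoke the independence of $\mathcal{G}_m$ and $\mathcal{G}_m^c$, which itself is a standard consequence of the defining property of a Poisson random measure that jump counts on disjoint Borel sets of $Z$ are mutually independent.

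First, I would show that $X_m^x$ is $\mathcal{G}_m^c$-measurable. Inspect equation (\ref{meq1-Zhai}): the only random input is the compensated measure $\tilde N(dz,ds)$ restricted to $Z_m^c$; the drift term $\cA(X_m(s))$ and the compensator $\int_{Z_m\setminus Z_1}\sigma(X_m(s),z)\nu(dz)ds$ are pathwise deterministic functionals of the solution, and no $N$-jumps inside $Z_m$ enter the equation. By the uniqueness part of Proposition \ref{Prop 2}, the solution is a measurable functional of $x$ and of the restriction $N|_{Z_m^c\times[0,\infty)}$ alone, so $\sigma\{X_m^x(t):t\ge 0\}\subseteq \mathcal{G}_m^c$. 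Concretely, one can build $X_m^x$ by Picard iteration (or by the recursive construction used in the proof of Proposition \ref{Prop 2}, applied with $Z_m$ replaced by finite-mass truncations of $Z_m^c$): at each stage the iterate is a measurable functional of $N|_{Z_m^c\times[0,\infty)}$, and the same holds in the limit.

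Second, the measurability $\sigma\{\tau_m^1\}\subseteq \mathcal{G}_m$ is immediate from the definition (\ref{stop time 1}): the counting process $N(Z_m,t)=\int_0^t\!\int_{Z_m}N(dz,ds)$ is $\mathcal{G}_m$-measurable for each $t$, hence so is the hitting time $\tau_m^1=\inf\{t\ge 0:N(Z_m,t)=1\}$ by the standard right-continuity argument $\{\tau_m^1\le t\}=\{N(Z_m,t)\ge 1\}\in\mathcal{G}_m$.

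Finally, since $N$ is a Poisson random measure on $Z\times\mathbb{R}^+$ with intensity $\nu\otimes\mathrm{Leb}$ and the sets $Z_m^c\times[0,\infty)$ and $Z_m\times[0,\infty)$ are disjoint, the two restrictions $N|_{Z_m^c\times[0,\infty)}$ and $N|_{Z_m\times[0,\infty)}$ are independent Poisson random measures, and consequently $\mathcal{G}_m^c$ and $\mathcal{G}_m$ are independent $\sigma$-algebras (this is the fact already recorded in the paragraph preceding the statement). Combining the three ingredients yields independence of $\sigma\{X_m^x\}$ and $\sigma\{\tau_m^1\}$.

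The only genuinely non-routine point is the first step, namely the $\mathcal{G}_m^c$-measurability of the whole trajectory $X_m^x$. This is really a statement that the solution map of (\ref{meq1-Zhai}) factors through the restricted noise; once we know a strong (pathwise unique) solution exists, this is forced, but it is the place where one must be careful if $\cA$ is only, say, locally monotone — one should invoke the same construction used to prove existence in Proposition \ref{Prop 2} and check measurability stage by stage. Everything else is bookkeeping.
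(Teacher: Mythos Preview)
Your proposal is correct and follows exactly the line the paper takes: the paper records (in the paragraph immediately preceding the proposition) that $\mathcal{G}_m$ and $\mathcal{G}_m^c$ are independent, that $X_m^x\in\mathcal{G}_m^c$, and that $\tau_m^1\in\mathcal{G}_m$, and presents the proposition as an immediate consequence without a separate proof. You have simply fleshed out each of these three assertions, including the only point requiring care --- the $\mathcal{G}_m^c$-measurability of $X_m^x$ via the pathwise uniqueness of (\ref{meq1-Zhai}) --- so your argument is a faithful (and more detailed) rendering of the paper's own reasoning.
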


\vskip 0.2cm

For any $x,y\in H$, $\eta>0$ and $m\in\mathbb{N}$, define $\mathbb{F}$-stopping times
\begin{eqnarray}\label{Stop time 01}
\tau_{x,y}^\eta=\inf\{t\geq 0: X^x(t)\not\in B(y,\eta)\}\ \ \text{ and }\ \ \tau_{x,y,m}^\eta=\inf\{t\geq 0: X^x_m(t)\not\in B(y,\eta)\}.
\end{eqnarray}
Since $X^x\in D([0,\infty),H)$ and $X^x_m\in D([0,\infty),H)$, $\mathbb{P}$-a.s., we have $\mathbb{P}(\tau_{x,x}^\eta>0)=1$ and $\mathbb{P}(\tau_ {x,x,m}^\eta>0)=1.$
\vskip 0.2cm
\begin{rmk}\label{Rem 1}
Note that $X^x(\tau_{x,y}^\eta)$ may not belong to $\overline{B(y,\eta)}$, in general the following statement is not true:
$$
\sup_{s\in[0,\tau_{x,y}^\eta]}\|X^x(s)-y\|_H\leq \eta\ \ \text{on}\ \{\tau_{x,y}^\eta>0\},\ \ \mathbb{P}\text{-a.s.}
$$
However, we have
$$
\sup_{s\in[0,\tau_{x,y}^\eta)}\|X^x(s)-y\|_H\leq \eta
\ \ \text{and}\ \ \sup_{s\in[0,\frac{\tau_{x,y}^\eta}{2}]}\|X^x(s)-y\|_H\leq \eta
\ \ \text{on}\ \{\tau_{x,y}^\eta>0\},\ \ \mathbb{P}\text{-a.s.}
$$

\end{rmk}
\vskip 0.2cm
We need the following assumption, which is held for most of the applications.
\begin{ass}\label{ass2}
For any $h\in H$, there exists $\eta_h>0$ such that, for any $\eta\in(0,\eta_h]$, there exist $(\epsilon,t)=(\epsilon(h,\eta),t(h,\eta))\in (0,\frac{\eta}{2}]\times (0,\infty)$ satisfying,
\begin{eqnarray*}
 \inf_{\tilde{h}\in B(h,\epsilon)}\mathbb{P}(\tau_{\tilde{h},h}^\eta\geq t)>0.
\end{eqnarray*}
\end{ass}

\begin{rmk}
The assumption \ref{ass2} basically says that with positive probability,  the solution stays in the neighbourhood of the initial position in a very short time period. This assumption is very mild, which holds for almost all reasonable equations.
\end{rmk}

\vskip 0.2cm

\begin{prop}\label{Prop 5}
Assume that Assumptions \ref{ass1} and \ref{ass2} hold. For any $m\in\mathbb{N}$ and $h\in H$, there exists $\eta^m_h>0$ such that, for any $\eta\in(0,\eta^m_h]$, there exist $(\epsilon,t)=(\epsilon(m,h,\eta),t(m,h,\eta))\in (0,\frac{\eta}{2}]\times (0,\infty)$ satisfying,
\begin{eqnarray}\label{2-1}
\inf_{\tilde{h}\in B(h,\epsilon)}\mathbb{P}(\tau_{\tilde{h},h,m}^\eta\geq t)>0.
\end{eqnarray}

\end{prop}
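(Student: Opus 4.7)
The plan is to bootstrap Assumption \ref{ass2} (stated for the original process $X^x$) up to the analogous statement for the truncated process $X^x_m$. The two essential ingredients are the construction of $X^x_m$ in the proof of Proposition \ref{Prop 2}, which forces $X^x_m(s)=X^x(s)$ for every $s\in[0,\tau_m^1)$, together with the independence of $X_m^{\tilde h}$ and $\tau_m^1$ supplied by Proposition \ref{Prop 3}. Choosing the time horizon small enough relative both to the probability bound from Assumption \ref{ass2} and to the exponential tail of $\tau_m^1$ will close the estimate.

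Concretely, fix $m\in\NN$ and $h\in H$, apply Assumption \ref{ass2} to $h$ to produce $\eta_h>0$, and for each $\eta\in(0,\eta_h]$ let $(\epsilon_0,t_0)=(\epsilon(h,\eta),t(h,\eta))\in(0,\eta/2]\times(0,\infty)$ with
$$c:=\inf_{\tilde h\in B(h,\epsilon_0)}\PP(\tau^\eta_{\tilde h,h}\geq t_0)>0.$$
Set $\eta^m_h:=\eta_h$ and $\epsilon(m,h,\eta):=\epsilon_0$. Since $\tau_m^1$ is $\mathrm{Exp}(\nu(Z_m))$-distributed with $\nu(Z_m)<\infty$, pick $t\in(0,t_0]$ small enough that $\PP(\tau_m^1<t)\leq c/2$ (the case $\nu(Z_m)=0$ is trivial because then $X^x_m=X^x$).

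The key pathwise observation is that on $\{\tau_m^1\geq t\}$ one has $X^{\tilde h}_m(s)=X^{\tilde h}(s)$ for every $s\in[0,t)$, so the first exit of either process from $B(h,\eta)$ in the interval $[0,t)$ happens at the same instant (or for neither); consequently
$$\{\tau^\eta_{\tilde h,h}\geq t\}\cap\{\tau_m^1\geq t\}=\{\tau^\eta_{\tilde h,h,m}\geq t\}\cap\{\tau_m^1\geq t\}.$$
Because $X^{\tilde h}_m$ is driven only by $N|_{Z^c_m}$, the stopping time $\tau^\eta_{\tilde h,h,m}$ is $\sigma(X^{\tilde h}_m)$-measurable, so Proposition \ref{Prop 3} yields that $\tau^\eta_{\tilde h,h,m}$ and $\tau_m^1$ are independent. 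Combining this independence with the event identity above and the elementary bound $\PP(A\cap B)\geq\PP(A)-\PP(B^c)$,
\begin{align*}
\PP(\tau^\eta_{\tilde h,h,m}\geq t)\,\PP(\tau_m^1\geq t)&=\PP(\tau^\eta_{\tilde h,h,m}\geq t,\,\tau_m^1\geq t)=\PP(\tau^\eta_{\tilde h,h}\geq t,\,\tau_m^1\geq t)\\
&\geq\PP(\tau^\eta_{\tilde h,h}\geq t)-\PP(\tau_m^1<t)\geq c-\tfrac{c}{2}=\tfrac{c}{2},
\end{align*}
where $t\leq t_0$ gives $\PP(\tau^\eta_{\tilde h,h}\geq t)\geq c$. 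Dividing by $\PP(\tau_m^1\geq t)\in(0,1]$ yields $\inf_{\tilde h\in B(h,\epsilon_0)}\PP(\tau^\eta_{\tilde h,h,m}\geq t)\geq c/2>0$, which is exactly \eqref{2-1}.

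The only non-routine step will be verifying the pathwise identification of the two stopping times on $\{\tau_m^1\geq t\}$; once that is recognised, the estimate reduces to a clean application of Proposition \ref{Prop 3} combined with the exponential control of $\tau_m^1$. No delicate Markovian or cutoff argument should be needed here, because the truncation in Proposition \ref{Prop 2} was designed precisely so that $X^{\tilde h}_m$ factors through the $\sigma$-algebra generated by $N|_{Z_m^c}$, making the relevant independence automatic.
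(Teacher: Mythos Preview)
Your proof is correct and follows essentially the same route as the paper: identify $\{\tau^\eta_{\tilde h,h}\geq t\}\cap\{\tau_m^1\geq t\}=\{\tau^\eta_{\tilde h,h,m}\geq t\}\cap\{\tau_m^1\geq t\}$ via the pathwise equality $X^{\tilde h}=X_m^{\tilde h}$ on $[0,\tau_m^1)$, factor the right-hand side using Proposition~\ref{Prop 3}, and then shrink $t$ so that the exponential tail $\PP(\tau_m^1<t)$ is dominated by the lower bound $c$ coming from Assumption~\ref{ass2}. The only cosmetic difference is that the paper writes the resulting inequality as $\PP(\tau^\eta_{\tilde h,h,m}\geq s)\geq\big(\PP(\tau^\eta_{\tilde h,h}\geq s)-\PP(\tau_m^1\leq s)\big)/\PP(\tau_m^1>s)$ and then chooses $s_0$ small, whereas you fix $t$ first so that $\PP(\tau_m^1<t)\leq c/2$ and obtain the explicit bound $c/2$.
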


\begin{proof}
Recall $\tau_m^1$ defined by (\ref{stop time 1}). For any $\tilde{h},h\in H$, $\eta,s>0$, we have
\begin{eqnarray}\label{eq 722}
\mathbb{P}(\tau_{\tilde{h},h}^\eta\geq s)
=
\mathbb{P}(\{\tau_{\tilde{h},h}^\eta\geq s\}\cap\{\tau_m^1\leq s\})
+
\mathbb{P}(\{\tau_{\tilde{h},h}^\eta\geq s\}\cap\{\tau_m^1> s\}).
\end{eqnarray}
Note that $\{X^x(t),t\in[0,\tau_m^1)\}$ coincides with $\{X_m^x(t),t\in[0,\tau_m^1)\}$ on $t\in[0,\tau_m^1)$. We have
\begin{eqnarray*}
\{\tau_{\tilde{h},h}^\eta\geq s\}\cap\{\tau_m^1> s\}=\{\tau_{\tilde{h},h,m}^\eta\geq s\}\cap\{\tau_m^1> s\}.
\end{eqnarray*}
Applying Proposition \ref{Prop 3},
$$
\mathbb{P}(\{\tau_{\tilde{h},h,m}^\eta\geq s\}\cap\{\tau_m^1> s\})=\mathbb{P}(\tau_{\tilde{h},h,m}^\eta\geq s)\mathbb{P}(\tau_m^1> s).
$$
Combining this equality with (\ref{eq 722}), one concludes that
\begin{eqnarray}\label{eq prp 3 01}
\mathbb{P}(\tau_{\tilde{h},h,m}^\eta\geq s)
\geq
\frac{\mathbb{P}(\tau_{\tilde{h},h}^\eta\geq s)-\mathbb{P}(\tau_m^1\leq s)}{\mathbb{P}(\tau_m^1> s)}.
\end{eqnarray}
By Assumption \ref{ass2}, there exists $\eta_h>0$ such that, for any $\eta\in(0,\eta_h]$, there exist $(\epsilon,t)=(\epsilon(h,\eta),t(h,\eta))\in (0,\frac{\eta}{2}]\times (0,\infty)$ satisfying
\begin{eqnarray}\label{eq prp 3 02}
\beta:=\inf_{\tilde{h}\in B(h,\epsilon)} \mathbb{P}(\tau_{\tilde{h},h}^\eta\geq t)>0.
\end{eqnarray}
Then, for any $s\in(0,t]$,
\begin{eqnarray}\label{eq prp 3 03}
\inf_{\tilde{h}\in B(h,\epsilon)} \mathbb{P}(\tau_{\tilde{h},h}^\eta\geq s)\geq\beta.
\end{eqnarray}
Recall that  $\tau_m^1$ has the exponential distribution with parameter $\nu(Z_m)<\infty$, that is,
\begin{eqnarray}\label{eq prp 3 04}
\mathbb{P}(\tau_m^1> s)=e^{-\nu(Z_m)s},\ \ \ \ \mathbb{P}(\tau_m^1\leq s)=1-e^{-\nu(Z_m)s}.
\end{eqnarray}
Putting together  (\ref{eq prp 3 01})--(\ref{eq prp 3 04}) we see that there exists $s_0>0$ small enough such that
$$
\inf_{\tilde{h}\in B(h,\epsilon)} \mathbb{P}(\tau_{\tilde{h},h,m}^\eta\geq s_0)>0.
$$
This completes the proof.
\end{proof}

\vskip 0.2cm
Now we introduce the conditions on the jumping measure of the driving noise of the equation (\ref{meq1}), which basically says that for any $\hbar,y\in H$, the neighbourhoods of $y$ can be reached from $\hbar$  through a finite number of choosing jumps.

\begin{ass}\label{ass3}
For any $\hbar,y\in H$ with $\hbar\neq y$ and any $\bar{\eta}>0$, there exist $n,m\in\mathbb{N}$, and $\{l_i,i=1,2,...,n\}\subset Z_m$
 such that,
 for any $\eta\in(0,\frac{\bar{\eta}}{2})$, there exist $\{\epsilon_i,i=1,2,...,n\}\subset(0,\infty)$ and $\{\eta_i,i=0,1,...,n\}\subset(0,\infty)$ such that, denoting
$$
q_0=\hbar,\ q_{i}=q_{i-1}+\sigma(q_{i-1},l_{i}),\ i=1,2,...,n,
$$
 \begin{itemize}
\item  $0<\eta_0\leq \eta_1\leq ...\leq \eta_{n-1}\leq \eta_n\leq \eta$;
   \item for any $i=0,1,...,n-1$, $\{\tilde{q}+\sigma(\tilde{q},l):\ \tilde{q}\in B(q_i,\eta_i),l\in B(l_{i+1},\epsilon_{i+1})\}\subset B(q_{i+1},\eta_{i+1})$;
   \item $B(q_{n},\eta_{n})\subset B(y,\frac{\bar{\eta}}{2})$;
   \item for any $i=1,2,...,n$, $\nu(B(l_{i},\epsilon_i))>0$;
   \item there exists $m_0\geq m$ such that $\bigcup_{i=1}^n B(l_{i},\epsilon_{i})\subset Z_{m_0}$.
 \end{itemize}

\end{ass}

\begin{rmk}\label{Rem 2}
Note that
$$
\lim_{\eta\searrow 0}\eta_i=0,\ \ \forall i=0,1,...n.
$$
 Moreover, in the above assumption, without loss of generality, it is not difficult to see that we can require that $\epsilon_i, i=1,2,...n$ is non-increasing as $\eta\searrow 0$.
\end{rmk}

\vskip 0.2cm
Now we are in a  position to state the main result of the paper.
\begin{thm}\label{thmmulti}
Suppose  Assumptions  \ref{ass1}, \ref{ass2} and \ref{ass3} hold. Then the Markov process formed by the solution $\{X^{x},x\in H\}$ of equation (\ref{meq1}) is irreducible in $H$.
\end{thm}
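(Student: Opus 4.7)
My plan is to follow the scheme sketched in the introduction. Fix $x, y \in H$, $T > 0$, $\kappa > 0$; the task is to show $\mathbb{P}(X^x(T) \in B(y,\kappa)) > 0$.

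\textbf{Step 1: local reduction near the target.} Applying Assumption \ref{ass2} at $y$ with $\eta = \min(\kappa/2, \eta_y)$ yields constants $\epsilon_0 \in (0, \kappa/4]$ and $t_0 > 0$ such that $\inf_{\tilde h \in B(y,\epsilon_0)} \mathbb{P}(\tau^{\eta}_{\tilde h, y} \geq t_0) > 0$. By Remark \ref{Rem 1}, on this event the trajectory $X^{\tilde h}$ stays in $B(y, \kappa/2) \subset B(y,\kappa)$ throughout the closed interval $[0, t_0/2]$. Setting $T_0 = T - t_0/2$ and invoking the Markov property of $\mathbb X$ at any deterministic time $\widetilde T \in (T_0, T)$, it suffices to produce $\widetilde T \in (T_0, T)$ with $\mathbb{P}(X^x(\widetilde T) \in B(y, \epsilon_0)) > 0$.

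\textbf{Step 2: applying the jump condition.} Choose $\zeta$ in the topological support of the law of $X^x(T_0)$, so that $\mathbb{P}(X^x(T_0) \in B(\zeta, \rho)) > 0$ for every $\rho > 0$. Feed $\hbar = \zeta$, target point $y$, and tolerance $\bar\eta = \epsilon_0$ into Assumption \ref{ass3}. This returns an integer $n$, a level $m_0$, jump locations $l_1, \ldots, l_n \in Z_{m_0}$, radii $\epsilon_i > 0$ and $\eta_0 \leq \cdots \leq \eta_n \leq \epsilon_0$, and a deterministic chain $q_0 = \zeta$, $q_i = q_{i-1} + \sigma(q_{i-1}, l_i)$, such that $B(q_n, \eta_n) \subset B(y, \epsilon_0/2)$, $\nu(B(l_i, \epsilon_i)) > 0$ for every $i$, and the uniform inclusion $\{\tilde q + \sigma(\tilde q, l) : \tilde q \in B(q_{i-1}, \eta_{i-1}),\, l \in B(l_i, \epsilon_i)\} \subset B(q_i, \eta_i)$ holds.

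\textbf{Step 3: the inductive core.} I will inductively construct times $T_0 < T_1 < \cdots < T_n < T$ and radii $\rho_i \leq \eta_i$ such that
$$\mathbb{P}\Bigl(\{X^x(T_0) \in B(\zeta,\rho_0)\} \cap \bigcap_{j=1}^i \{X^x(T_j) \in B(q_j, \rho_j)\}\Bigr) > 0, \qquad i = 0, 1, \ldots, n.$$
To pass from level $i-1$ to level $i$, I split the Poisson random measure $N$ according to the disjoint decomposition $Z = B(l_i, \epsilon_i) \sqcup (Z_{m_0} \setminus B(l_i, \epsilon_i)) \sqcup Z_{m_0}^c$; the three resulting Poisson random measures are mutually independent. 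Let $\sigma_i^{*}$ be the first jump time after $T_{i-1}$ of the restriction of $N$ to $B(l_i, \epsilon_i)$ and $\widetilde\sigma_i$ the analogous first jump time for $Z_{m_0} \setminus B(l_i, \epsilon_i)$. I intersect with the positive-probability events
\begin{itemize}
\item[(a)] $X^x(T_{i-1}) \in B(q_{i-1}, \rho_{i-1})$ (inductive hypothesis);
\item[(b)] $\widetilde\sigma_i > T_i$ and $\sigma_i^{*} \in (T_{i-1}, T_i)$, so the unique $Z_{m_0}$-jump in $(T_{i-1}, T_i]$ lies in $B(l_i, \epsilon_i)$;
\item[(c)] the trajectory of the truncated process $X^{\cdot}_{m_0}$ (which coincides with $X^x$ away from the $Z_{m_0}$-jumps) stays in $B(q_{i-1}, \eta_{i-1})$ until $\sigma_i^{*}$ and in $B(q_i, \eta_i)$ on $[\sigma_i^{*}, T_i]$ after restarting from $X^x(\sigma_i^{*})$.
\end{itemize}
These events combine as follows. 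Between $T_{i-1}$ and $\sigma_i^{*}$, $X^x$ coincides with $X^{\cdot}_{m_0}$, so Proposition \ref{Prop 5} applied at $h = q_{i-1}$ yields a uniform-in-starting-point positive probability to stay in $B(q_{i-1}, \eta_{i-1})$ over any sufficiently short interval, which forces $\rho_{i-1} \leq \epsilon(m_0, q_{i-1}, \eta_{i-1})$ and controls the length $T_i - T_{i-1}$. The exponential waiting time for $\sigma_i^{*}$ has parameter $\nu(B(l_i, \epsilon_i)) > 0$ and is independent of the continuous dynamics by Proposition \ref{Prop 3}; simultaneously, $\widetilde\sigma_i$ (independent of everything else) exceeds $T_i$ with positive probability, and two independent Poisson components almost surely do not jump simultaneously. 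At $\sigma_i^{*}$ the increment $\sigma(X^x(\sigma_i^{*}-), L)$ carries the process into $B(q_i, \eta_i)$ by the third bullet of Assumption \ref{ass3}, and a second application of Proposition \ref{Prop 5} at $q_i$ controls $[\sigma_i^{*}, T_i]$. Choosing the radii so that the iteration proceeds and $B(q_n, \rho_n) \subset B(y, \epsilon_0)$, Step 1 then delivers the conclusion with $\widetilde T = T_n$.

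\textbf{Main obstacle.} The delicate point is orchestrating Step 3: coordinating the splitting of $N$ into independent parts, the strong Markov property of both $\mathbb X$ and the truncated process $X^{\cdot}_{m_0}$, the independence statement of Proposition \ref{Prop 3}, and the uniform-in-initial-datum estimates of Proposition \ref{Prop 5}, while ensuring that no spurious big jump contaminates the continuous legs. The crucial probabilistic inputs are that jumps of $N$ on disjoint Borel subsets of $Z$ are mutually independent and that two independent Poisson point processes almost surely do not jump at the same instant; everything else is bookkeeping, with Assumption \ref{ass3} providing exactly the combinatorial structure that makes the jump chain $q_0 \to q_1 \to \cdots \to q_n$ close under small perturbations.
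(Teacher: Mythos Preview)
Your proposal is correct and follows essentially the same route as the paper's proof: the paper likewise reduces to hitting $B(y,\epsilon_0)$ at some time in $(T_0,T)$, picks $\zeta$ in the support of the law of $X^x(T_0)$, and then runs exactly your inductive Step~3 (splitting $N$ on disjoint sets, using $X_{m_0}$ and Proposition~\ref{Prop 3} for independence on the pre-jump leg, strong Markov at the jump time, and Assumption~\ref{ass3} for the jump inclusion). The only points the paper makes more explicit are a Case~1/Case~2 split to dispose of the trivial situation $\zeta\in B(y,\epsilon_0)$ (where Assumption~\ref{ass3} is not needed), and a preparatory Proposition~\ref{Prop 9-1} that pre-shrinks the radii via Remark~\ref{Rem 2} so that the landing ball after each jump is small enough to re-invoke the stay estimates at the next $q_i$---the ``bookkeeping'' you allude to.
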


\vskip 0.2cm
In the rest of this section, let us consider the particular case of the additive noise. Let  now $Z=H$, and $\nu$ a  given  $\si$-finite intensity measure of a L\'evy process  on $H$. Recall that $\nu(\{0\})=0$ and $\int_{H}(\|z\|_H^2\wedge1)\nu(dz)<\infty$.
Let $N: \cB(H\times\RR^+) \times \Omega\rightarrow \bar{\NN}$ be the time homogeneous Poisson random measure with intensity measure $\nu$. Again $\tilde{N} (dz,dt) = N(dz,dt) - \nu(dz)dt$ denotes the  compensated Poisson random measure associated to $N$.
\vskip 0.2cm
Let us point out that, see e.g., \cite[Theorems 4.23 and 6.8]{PZ 2007}, in this case  $$L(t)=\int_0^t\int_{0<\|z\|_H\leq1}z\tilde{N}(dz,ds) + \int_0^t\int_{\|z\|_H>1} zN(dz,ds), t\geq0$$ defines an $H$-valued L\'evy process.

\vskip 0.2cm

Let $Z_m=\{\chi\in H: \|\chi\|_H> \frac{1}{m}\}$, $m\in\mathbb{N}$. Now consider the SPDE (\ref{meq1}) with the additive noise $dL(t)$, that is,
\begin{eqnarray}\label{meq1-additive case}
  &&dX(t)= \cA (X(t)) dt +dL(t),\\
  &&X(0)=x.\nonumber
\end{eqnarray}

Let us now formulate the condition on the jumping measure in this setting.
\vskip 0.2cm

\begin{ass}\label{assv}
For any $h \in H$ and $\et_h >0$, there exist $n\in\mathbb{N}$, a sequence of strict positive numbers $\et_1,\ \et_2, \cdots, \et_n$, and $a_1,\ a_2, \cdots,\ a_n \in H\setminus\{0\}$, such that $ 0 \notin \overline{B(a_i, \et_i)}$, $\nu\big(B(a_i, \et_i)\big)>0$ , $ i=1,..., n$, and that $\sum_{i=1}^n{ B(a_i, \et_i)} :=\{\sum_{i=1}^n h_i:h_i\in B(a_i, \et_i),  1 \leq i \leq n\}\subset B(h, \et_h)$.
\end{ass}
\vskip 0.2cm

As an application of Theorem \ref{thmmulti}, we have
\begin{thm}\label{thmmulti-additive case}
Under Assumptions  \ref{ass1}, \ref{ass2} and \ref{assv}, the Markov process formed by the solution $\{X^{x},x\in H\}$ of equation (\ref{meq1-additive case}) is irreducible in $H$.
\end{thm}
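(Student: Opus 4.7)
The plan is to derive Theorem \ref{thmmulti-additive case} from Theorem \ref{thmmulti} by viewing equation (\ref{meq1-additive case}) as a special case of equation (\ref{meq1}) with $Z=H$, $Z_m=\{\chi\in H:\|\chi\|_H>1/m\}$, and $\sigma(x,z)=z$. Since Assumptions \ref{ass1} and \ref{ass2} are given hypotheses, it suffices to verify that Assumption \ref{assv} forces Assumption \ref{ass3} in this additive setting.

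Fix $\hbar,y\in H$ with $\hbar\neq y$ and $\bar\eta>0$. Apply Assumption \ref{assv} to $h:=y-\hbar$ with $\eta_h:=\bar\eta/4$, obtaining $n\in\mathbb{N}$, points $a_1,\ldots,a_n\in H\setminus\{0\}$ and radii $r_1,\ldots,r_n>0$ such that $0\notin\overline{B(a_i,r_i)}$, $\nu(B(a_i,r_i))>0$, and $\sum_{i=1}^n B(a_i,r_i)\subset B(y-\hbar,\bar\eta/4)$. The key refinement is to replace each $a_i$ by a point $l_i\in B(a_i,r_i)\cap\mathrm{supp}(\nu)$; such $l_i$ exists because $\nu$ is $\sigma$-finite on a Polish space and hence every open set of positive $\nu$-measure meets $\mathrm{supp}(\nu)$. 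By construction $\nu(B(l_i,\epsilon))>0$ for \emph{every} $\epsilon>0$. Setting $\delta_0:=\min_i(\|a_i\|_H-r_i)>0$, we have $\|l_i\|_H\geq\delta_0$, so choosing $m\in\mathbb{N}$ with $2/m<\delta_0$ gives $l_i\in Z_m$ for every $i$.

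Now put $q_0=\hbar$, $q_i=q_{i-1}+l_i$. Since $\sum_i l_i\in\sum_i B(a_i,r_i)\subset B(y-\hbar,\bar\eta/4)$, we obtain $\|q_n-y\|_H<\bar\eta/4$. For any given $\eta\in(0,\bar\eta/2)$, choose $\epsilon_1,\ldots,\epsilon_n>0$ small enough that $\sum_i\epsilon_i\leq\min(\eta/2,\bar\eta/8)$ and $\epsilon_i<\delta_0-1/m$ (so $B(l_i,\epsilon_i)\subset Z_m$), and set $\eta_0:=\min(\eta/2,\bar\eta/8)$, $\eta_i:=\eta_{i-1}+\epsilon_i$. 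Then $0<\eta_0\leq\cdots\leq\eta_n\leq\min(\eta,\bar\eta/4)\leq\eta$, while $\nu(B(l_i,\epsilon_i))>0$ by the support property. Since $\sigma(\tilde q,l)=l$, for every $\tilde q\in B(q_i,\eta_i)$ and $l\in B(l_{i+1},\epsilon_{i+1})$ the triangle inequality yields
\[
\|\tilde q+l-q_{i+1}\|_H\leq\|\tilde q-q_i\|_H+\|l-l_{i+1}\|_H\leq\eta_i+\epsilon_{i+1}=\eta_{i+1},
\]
and $B(q_n,\eta_n)\subset B(y,\eta_n+\|q_n-y\|_H)\subset B(y,\bar\eta/2)$. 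Taking $m_0=m$, all clauses of Assumption \ref{ass3} are verified, and Theorem \ref{thmmulti} delivers the conclusion.

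The only real obstacle, and the reason the naive choice $l_i=a_i$ fails, is the need to keep $\nu(B(l_i,\epsilon_i))>0$ as $\epsilon_i$ is shrunk \emph{uniformly} over $\eta\in(0,\bar\eta/2)$: Assumption \ref{ass3} forces $n,m,l_i$ to be fixed \emph{before} $\eta$ is chosen, while $\epsilon_i$ may depend on $\eta$. Passing to support points of $\nu$ inside each $B(a_i,r_i)$ is exactly the device that decouples these two quantifier levels and makes the quantitative bookkeeping go through.
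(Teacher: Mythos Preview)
Your proposal is correct and follows exactly the route the paper intends: the paper presents Theorem~\ref{thmmulti-additive case} simply ``as an application of Theorem~\ref{thmmulti}'', leaving the verification that Assumption~\ref{assv} implies Assumption~\ref{ass3} in the additive case implicit, and you have filled in precisely that verification.

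Your observation about the quantifier ordering in Assumption~\ref{ass3} --- that $n,m,l_i$ must be fixed before $\eta$, forcing $\epsilon_i\to 0$ as $\eta\to 0$ and hence requiring $\nu(B(l_i,\epsilon))>0$ for \emph{all} $\epsilon>0$ --- is the genuine content of the reduction, and your remedy of replacing each $a_i$ by a support point $l_i\in B(a_i,r_i)\cap S_\nu$ is both correct and necessary. This is entirely in line with the paper's own viewpoint: in Subsection~4.1 the authors remark that Assumption~\ref{assv} is equivalent to the density in $H$ of $H_0=\{\sum_{i=1}^n m_ia_i:\, n,m_i\in\mathbb{N},\ a_i\in S_\nu\}$, which is exactly the support-point reformulation you exploit. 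One cosmetic point: in your displayed triangle inequality the bound should read $<\eta_i+\epsilon_{i+1}$ rather than $\leq$, since $B(\cdot,\cdot)$ denotes the open ball; this is what actually gives the required inclusion into $B(q_{i+1},\eta_{i+1})$.
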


\vskip 0.2cm

Considering (\ref{meq1-additive case}) with $\mathcal{A}=0$ and $x=0$, then we have
\begin{corollary}\label{cor 1}
 Assume that Assumptions \ref{assv} holds. For any $s>0$,  $\epsilon>0$ and $h\in H$, $\mathbb{P}(L(s)\in B(h,\epsilon))>0$.
\end{corollary}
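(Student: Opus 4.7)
The plan is to derive the corollary as a direct application of Theorem \ref{thmmulti-additive case} to the degenerate equation (\ref{meq1-additive case}) with $\mathcal{A}\equiv 0$ and initial datum $x=0$. With these choices the unique solution is $X^{0}(t)=L(t)$, so it suffices to verify that all three hypotheses of the theorem, namely Assumptions \ref{ass1}, \ref{ass2} and \ref{assv}, are in force. Assumption \ref{assv} is the hypothesis of the corollary, so nothing has to be done there.

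Verifying Assumption \ref{ass1} is essentially trivial: when $\mathcal{A}\equiv 0$, the translated Lévy process $X^{x}(t)=x+L(t)$ is a strong Markov process, and it is the unique solution to the SDE in $H$ because any other solution $Y$ satisfies $Y(t)-L(t)\equiv x$.

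The one genuine computation is Assumption \ref{ass2}. Fix $h\in H$, set $\eta_h=1$, and for $\eta\in(0,\eta_h]$ put $\epsilon:=\eta/4$. For every $\tilde{h}\in B(h,\epsilon)$ we have $X^{\tilde{h}}(s)-h=(\tilde{h}-h)+L(s)$, so
\[
\mathbb{P}\bigl(\tau_{\tilde{h},h}^{\eta}\geq t\bigr)\;\geq\;\mathbb{P}\Bigl(\sup_{s\in[0,t]}\|X^{\tilde{h}}(s)-h\|_{H}<\eta\Bigr)\;\geq\;\mathbb{P}\Bigl(\sup_{s\in[0,t]}\|L(s)\|_{H}<\tfrac{\eta}{2}\Bigr),
\]
since $\|\tilde{h}-h\|_{H}<\eta/4<\eta/2$. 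Because $L$ is càdlàg with $L(0)=0$, right continuity at $0$ gives $\sup_{s\in[0,t]}\|L(s)\|_{H}\to 0$ almost surely as $t\to 0^{+}$, so the last probability tends to $1$; in particular there exists $t=t(h,\eta)>0$ for which this probability is strictly positive. Since the resulting lower bound does not depend on $\tilde{h}$, we obtain $\inf_{\tilde{h}\in B(h,\epsilon)}\mathbb{P}(\tau_{\tilde{h},h}^{\eta}\geq t)>0$, which is exactly Assumption \ref{ass2}. Observe that this step is also the only potential obstacle, and it rests on nothing deeper than the right continuity of Lévy paths at the origin.

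Having checked all three assumptions, Theorem \ref{thmmulti-additive case} guarantees that $\{X^{x},x\in H\}$ is irreducible in $H$. Applied to the initial condition $x=0$, this means $\mathbb{P}(X^{0}(s)\in B)>0$ for every $s>0$ and every nonempty open $B\subset H$. Since $X^{0}(s)=L(s)$ and $B(h,\epsilon)$ is a nonempty open set, we conclude $\mathbb{P}(L(s)\in B(h,\epsilon))>0$, which is the statement of the corollary.
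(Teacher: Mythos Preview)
Your proof is correct and follows precisely the paper's own approach: the paper introduces the corollary with the single phrase ``Considering (\ref{meq1-additive case}) with $\mathcal{A}=0$ and $x=0$, then we have,'' and you have simply filled in the routine verifications of Assumptions \ref{ass1} and \ref{ass2} that the paper leaves implicit.
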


\vskip 0.2cm


We also like to stress that Corollary \ref{cor 1} covers both finite and infinite dimensional L\'evy processes, and is  even new  for $\mathbb{R}^n$-valued L\'evy processes. We  refer the reader to Chapter 5 in \cite{Sato} for the study of the support of $\mathbb{R}^n$-valued L\'evy processes.

\section{Proofs of the main results}
In this section, we will give the proof of Theorem 2.1. To this end, we need to prepare a number of results. Recall $\tau_{x,y}^{\eta}$ introduced in (\ref{Stop time 01}).

\begin{prop}\label{Prop 6-1}
Assume that Assumptions \ref{ass1} and \ref{ass2} hold. For any $h\in H$, there exists $\eta_h>0$ such that, for any $\eta\in(0,\eta_h]$, there exist $s>0$, $\varpi_0\in(0,\frac{\eta}{2}]$
satisfying
for any $\tilde{h}\in \overline{B(h,\varpi_0)}$,
\begin{eqnarray}\label{eq prop 6 02-1}
\mathbb{P}(\sup_{l\in[0,s]}\|X^{\tilde{h}}(l)-h\|_H\leq \eta)>0.
\end{eqnarray}
\end{prop}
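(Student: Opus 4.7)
The plan is to derive the supremum bound directly from Assumption \ref{ass2} by applying it at the halved radius $\eta/2$ and then using the second inequality in Remark \ref{Rem 1}, which is precisely designed to upgrade an ``inside-before-$\tau$'' statement to a uniform bound on the closed half-interval $[0,\tau/2]$, even in the presence of a jump at the exit time.

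Concretely, let $\eta_h$ be the constant produced by Assumption \ref{ass2} at the point $h$; I will use the same $\eta_h$ in the proposition. For any $\eta\in(0,\eta_h]$, the value $\eta/2$ also lies in $(0,\eta_h]$, so Assumption \ref{ass2} yields $\epsilon_\ast:=\epsilon(h,\eta/2)\in(0,\eta/4]$ and $t_\ast:=t(h,\eta/2)>0$ with
\[
\inf_{\tilde h\in B(h,\epsilon_\ast)}\mathbb{P}\bigl(\tau^{\eta/2}_{\tilde h,h}\geq t_\ast\bigr)>0.
\]
I would then choose $\varpi_0\in(0,\epsilon_\ast/2]$ and set $s:=t_\ast/2$, which gives $\overline{B(h,\varpi_0)}\subset B(h,\epsilon_\ast)$ and $\varpi_0\leq\eta/8\leq\eta/2$ as required in the statement. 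Fix any $\tilde h\in\overline{B(h,\varpi_0)}$. Since $X^{\tilde h}$ is c\`adl\`ag and $X^{\tilde h}(0)=\tilde h\in B(h,\eta/2)$, we have $\tau^{\eta/2}_{\tilde h,h}>0$ almost surely, so on the event $\{\tau^{\eta/2}_{\tilde h,h}\geq t_\ast\}$ one has $[0,s]=[0,t_\ast/2]\subset[0,\tau^{\eta/2}_{\tilde h,h}/2]$. Remark \ref{Rem 1} then gives
\[
\sup_{l\in[0,s]}\|X^{\tilde h}(l)-h\|_H\leq\sup_{l\in[0,\tau^{\eta/2}_{\tilde h,h}/2]}\|X^{\tilde h}(l)-h\|_H\leq\frac{\eta}{2}\leq\eta,
\]
so $\mathbb{P}\bigl(\sup_{l\in[0,s]}\|X^{\tilde h}(l)-h\|_H\leq\eta\bigr)\geq\mathbb{P}\bigl(\tau^{\eta/2}_{\tilde h,h}\geq t_\ast\bigr)>0$, establishing \eqref{eq prop 6 02-1}.

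The only genuine subtlety, which is precisely the one flagged in Remark \ref{Rem 1}, is that because of the possibility of a jump at the exit time, $X^{\tilde h}(\tau^{\eta}_{\tilde h,h})$ may sit far outside $\overline{B(h,\eta)}$, so applying Assumption \ref{ass2} directly with the target radius $\eta$ would not yield a sup bound on any deterministic closed time interval. Halving the radius to $\eta/2$ and then halving the time from $t_\ast$ to $t_\ast/2$ is the device that bypasses this obstruction: a jump at the exit time would land in $H$ but the process is already frozen at the safer radius $\eta/2$ on $[0,\tau/2]$. I do not expect to need either the truncated process $X^x_m$ or Proposition \ref{Prop 5} for this particular step; those will be used elsewhere to carry out the cutoff/stopping-time argument in the main theorem.
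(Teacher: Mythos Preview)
Your proof is correct and follows essentially the same route as the paper: invoke Assumption \ref{ass2}, then use the second inequality of Remark \ref{Rem 1} together with a halved time to pass from $\{\tau\geq t\}$ to a sup bound on the closed interval $[0,t/2]$.

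One small point of comparison: the paper applies Assumption \ref{ass2} directly at the target radius $\eta$ (not $\eta/2$), obtaining $(\epsilon,t)$ and then setting $\varpi_0=\epsilon/2$, $s=t/2$. Your additional radius-halving step is harmless but unnecessary, and your closing remark that ``applying Assumption \ref{ass2} directly with the target radius $\eta$ would not yield a sup bound on any deterministic closed time interval'' is not quite right: Remark \ref{Rem 1} already guarantees $\sup_{l\in[0,\tau/2]}\|X^{\tilde h}(l)-h\|_H\leq\eta$ on $\{\tau^{\eta}_{\tilde h,h}>0\}$, so halving the \emph{time} alone handles the jump-at-exit issue. Halving the radius buys you the sharper conclusion $\leq\eta/2$, which is fine but not needed for the statement.
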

\begin{proof}
By Assumption 2.2, for any $h\in H$, there exists $\eta_h>0$ such that, for any $\eta\in(0,\eta_h]$, there exists $(\epsilon,t)=(\epsilon(h,\eta),t(h,\eta))\in (0,\frac{\eta}{2}]\times (0,\infty)$ such that, for any $\tilde{h}\in B(h,\epsilon)$,
\begin{eqnarray*}
\mathbb{P}(\tau_{\tilde{h},h}^\eta\geq t)>0,
\end{eqnarray*}
which implies that for any $\tilde{h}\in \overline{B(h,\frac{\epsilon}{2})}$,
\begin{eqnarray*}
\mathbb{P}(\sup_{l\in[0,\frac{t}{2}]}\|X^{\tilde{h}}(l)-h\|_H\leq \eta)>0.
\end{eqnarray*}
We simply choose $s=\frac{t}{2}$ and $\varpi_0=\frac{\epsilon}{2}$ to complete the proof.
\end{proof}

\vskip 0.2cm

By Proposition \ref{Prop 5}, a similar argument to that used for the proof of Proposition \ref{Prop 6-1} leads to the following statement.
\begin{prop}\label{Prop 6-01-1}
Assume that Assumptions \ref{ass1} and \ref{ass2} hold. For any $m\in\mathbb{N}$ and  $h\in H$, there exists $\eta^m_h>0$ such that, for any $\eta^m\in(0,\eta^m_h]$, there exist $s^m>0$, $\varpi^m_0\in(0,\frac{\eta^m}{2}]$ such that
for any $\tilde{h}\in \overline{B(h,\varpi^m_0)}$,
\begin{eqnarray}\label{eq prop 6 04-1}
\mathbb{P}(\sup_{l\in[0,s^m]}\|X^{\tilde{h}}_m(l)-h\|_H\leq \eta^m)>0.
\end{eqnarray}

\end{prop}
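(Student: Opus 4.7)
The plan is to mirror the proof of Proposition 3.1 (\ref{Prop 6-1}) almost verbatim, but use Proposition \ref{Prop 5} in place of Assumption \ref{ass2} as the starting input. The point is that Proposition \ref{Prop 5} provides exactly the same kind of non-exit statement for the truncated process $X_m$ that Assumption \ref{ass2} provides for $X$, so no new structural ideas are required.

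More precisely, I would first apply Proposition \ref{Prop 5} to the given $h \in H$: this yields some $\eta_h^m > 0$ such that for every $\eta^m \in (0, \eta_h^m]$ there exist $(\epsilon^m, t^m) = (\epsilon(m,h,\eta^m), t(m,h,\eta^m)) \in (0, \eta^m/2] \times (0,\infty)$ with
\begin{equation*}
\inf_{\tilde{h} \in B(h, \epsilon^m)} \mathbb{P}\bigl(\tau_{\tilde{h}, h, m}^{\eta^m} \geq t^m\bigr) > 0.
\end{equation*}
Next, I would invoke the $X_m$-analogue of Remark \ref{Rem 1}: since $X_m^{\tilde h} \in D([0,\infty), H)$ almost surely, on the event $\{\tau_{\tilde{h}, h, m}^{\eta^m} \geq t^m\}$ one has the trajectory bound
\begin{equation*}
\sup_{l \in [0, t^m/2]} \|X_m^{\tilde h}(l) - h\|_H \leq \eta^m,
\end{equation*}
simply because the supremum is taken on a closed interval strictly inside the open interval $[0, \tau_{\tilde h, h, m}^{\eta^m})$ on which the càdlàg process remains in $B(h, \eta^m)$.

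Finally, I would set $s^m := t^m/2$ and $\varpi_0^m := \epsilon^m/2 \in (0, \eta^m/2]$. For any $\tilde h \in \overline{B(h, \varpi_0^m)} \subset B(h, \epsilon^m)$, the probability of the event $\{\sup_{l \in [0, s^m]} \|X_m^{\tilde h}(l) - h\|_H \leq \eta^m\}$ dominates the already strictly positive probability $\mathbb{P}(\tau_{\tilde h, h, m}^{\eta^m} \geq t^m)$, giving (\ref{eq prop 6 04-1}). There is no real obstacle here; the only mild point to be careful about is the distinction between the strict interval $[0, \tau^{\eta^m}_{\tilde h, h, m})$ where the trajectory is controlled and the closed interval $[0, t^m/2]$ that we need, which is handled exactly as in Remark \ref{Rem 1}.
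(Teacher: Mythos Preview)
Your proposal is correct and is precisely the argument the paper has in mind: the paper itself does not write out a proof but simply states that ``by Proposition~\ref{Prop 5}, a similar argument to that used for the proof of Proposition~\ref{Prop 6-1}'' gives the result. Your write-up is exactly that similar argument, with the same choices $s^m=t^m/2$ and $\varpi_0^m=\epsilon^m/2$ as in Proposition~\ref{Prop 6-1}.
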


\begin{prop}\label{Prop 7}
Assume that Assumption \ref{ass3} holds. For any $\hbar,y\in H$ with $\hbar\neq y$ and any $\bar{\eta}>0$, there exist $n,m\in\mathbb{N}$, and $\{l_i,i=1,2,...,n\}\subset Z_m$
 such that, for any $\eta\in(0,\frac{\bar{\eta}}{2})$, there exist $\{\epsilon_i,i=1,2,...,n\}\subset(0,\infty)$ and
 $$
 0<\eta_0<\eta_1<2\eta_1\leq\eta_1'<\eta_2<2\eta_2\leq\eta_2'<...<\eta_{n-1}<2\eta_{n-1}\leq\eta_{n-1}'<\eta_n<2\eta_n\leq\eta_n'<\eta
 $$
  such that, denoting
$$
q_0=\hbar,\ q_{i}=q_{i-1}+\sigma(q_{i-1},l_{i}),\ i=1,2,...,n,
$$
 \begin{itemize}
   \item $\{\tilde{q}+\sigma(\tilde{q},l),\ \tilde{q}\in \overline{B(q_0,\eta_0)},l\in B(l_{1},\epsilon_{1})\}\subset \overline{B(q_{1},\eta_{1})}$,
   \item for any $i=1,...,n-1$, $\{\tilde{q}+\sigma(\tilde{q},l),\ \tilde{q}\in \overline{B(q_i,\eta_i')},l\in B(l_{i+1},\epsilon_{i+1})\}\subset \overline{B(q_{i+1},\eta_{i+1})}$;
   \item $\overline{B(q_{n},2\eta_{n})}\subset\overline{B(q_{n},\eta'_{n})}\subset B(y,\frac{\bar{\eta}}{2})$;
   \item for any $i=1,2,...,n$, $\nu(B(l_{i},\epsilon_i))>0$;
   \item there exists $m_0\geq m$ such that $\bigcup_{i=1}^n B(l_{i},\epsilon_{i})\subset Z_{m_0}$;
   \item for any $i=1,2,...,n$, $\epsilon_i$ is non-increasing as $\eta\searrow 0$.
 \end{itemize}

\end{prop}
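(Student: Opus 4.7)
The plan is to establish Proposition 3.3 by invoking Assumption 2.3 exactly $n$ times, once for each desired inclusion, with recursively nested parameters that simultaneously enforce the chain of strict inequalities and the closed-ball inclusions. First I will apply Assumption 2.3 to the given triple $(\hbar, y, \bar\eta)$ to fix $n$, $m$ and the jumps $\{l_i\}_{i=1}^n \subset Z_m$; these same choices serve for Proposition 3.3. A useful preliminary observation is that the containment $B(q_n, \eta_n) \subset B(y, \bar\eta/2)$ in Assumption 2.3 together with $\eta_n > 0$ forces $\|q_n - y\|_H < \bar\eta/2$, so that $\delta_0 := \bar\eta/2 - \|q_n - y\|_H > 0$ is a strictly positive constant depending only on $\hbar, y, \bar\eta$.

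Given $\eta \in (0, \bar\eta/2)$, I build the radii by a backward recursion. Initialize $\eta_n' := \tfrac{1}{2}\min(\delta_0, \eta)$, so that $\eta_n' < \eta$ and $\overline{B(q_n, \eta_n')} \subset B(y, \bar\eta/2)$. Then for $k = n, n-1, \ldots, 1$ in turn: define $\eta_k := \eta_k'/3$ (which secures $2\eta_k < \eta_k'$); apply Assumption 2.3 with parameter $\eta^{(k-1)} := \eta_k \in (0, \bar\eta/2)$ to extract sequences $\{\eta_j^{(k-1)}\}_{j=0}^n$ and $\{\epsilon_j^{(k-1)}\}_{j=1}^n$; set $\epsilon_k := \epsilon_k^{(k-1)}$; and put $\eta_{k-1}' := \eta_{k-1}^{(k-1)}/2$ when $k \ge 2$, or $\eta_0 := \eta_0^{(0)}/2$ when $k = 1$. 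Writing $T(\tilde q, l) := \tilde q + \sigma(\tilde q, l)$, the $(k-1)$-th inclusion supplied by the $k$-th application of Assumption 2.3 then reads $T(B(q_{k-1}, \eta_{k-1}^{(k-1)}) \times B(l_k, \epsilon_k)) \subset B(q_k, \eta_k^{(k-1)})$.

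The verification proceeds in three pieces. The chain of strict inequalities $0 < \eta_0 < \eta_1 < 2\eta_1 \le \eta_1' < \cdots < \eta_n' < \eta$ follows from $\eta_j^{(k-1)} \le \eta^{(k-1)} = \eta_k$, which yields $\eta_{k-1}' \le \eta_k/2 < \eta_k$ (respectively $\eta_0 < \eta_1$), together with $2\eta_k < \eta_k'$ by construction. The closed-ball inclusion follows because $\eta_{k-1}' < \eta_{k-1}^{(k-1)}$ gives $\overline{B(q_{k-1}, \eta_{k-1}')} \subset B(q_{k-1}, \eta_{k-1}^{(k-1)})$, while $\eta_k^{(k-1)} \le \eta_k$ gives $B(q_k, \eta_k^{(k-1)}) \subset \overline{B(q_k, \eta_k)}$; chaining with the open-ball inclusion above yields $T(\overline{B(q_{k-1}, \eta_{k-1}')} \times B(l_k, \epsilon_k)) \subset \overline{B(q_k, \eta_k)}$ as required (the $k = 1$ case is identical with $\eta_0$ in place of $\eta_{k-1}'$). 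The positivity $\nu(B(l_i, \epsilon_i)) > 0$ and the containment $\bigcup_i B(l_i, \epsilon_i) \subset Z_{m_0}$ with $m_0 := \max_k m_0^{(k-1)}$ transfer directly from the $n$ invoked applications of Assumption 2.3.

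The main obstacle I anticipate is the concluding monotonicity requirement, that each $\epsilon_i$ is non-increasing as $\eta \searrow 0$. Since by Remark 2.4 each $\epsilon_k = \epsilon_k^{(k-1)}$ is non-increasing in $\eta^{(k-1)} = \eta_k$, I need $\eta_k$ itself to be non-decreasing in $\eta$; this propagates through the backward recursion only if, within each application of Assumption 2.3, I select $\eta_j^{(k-1)}$ monotonically in $\eta^{(k-1)}$. Such a monotone selection is feasible because the set of valid Assumption 2.3 sequences grows with the parameter $\eta^{(k-1)}$, but carrying it out carefully while keeping the two layers of indexing (the absolute $\eta_i, \eta_i'$ chain versus the parenthesized $\eta_j^{(k-1)}$ sequences) straight is the principal technical burden of the argument.
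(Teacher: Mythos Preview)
Your approach is correct and shares the paper's essential idea: invoke Assumption~2.3 repeatedly with successively smaller parameters, then upgrade the open-ball inclusions to the required closed-ball inclusions by inserting strict gaps between radii. The paper's execution differs only cosmetically: rather than your backward recursion (applying Assumption~2.3 exactly $n$ times with each parameter $\eta^{(k-1)}=\eta_k$ fed by the previous step), it applies Assumption~2.3 along the full dyadic scale $\eta_k=\eta/2^k$, $k\in\mathbb{N}$, obtains a two-parameter family $\{\eta_i^k,\epsilon_i^k\}$, and then selects suitable levels $k$ for each index using $\eta_i^k\to 0$ and the monotonicity of $\epsilon_i^k$ in $k$. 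Your version is more explicit about how the radii $\eta_i,\eta_i'$ are produced; the paper's version keeps the monotonicity-in-$\eta$ bookkeeping marginally cleaner since all selections come from a single monotone scale. Both proofs leave the final ``$\epsilon_i$ non-increasing as $\eta\searrow 0$'' clause at the level of the observation you already make---that valid Assumption~2.3 data for a smaller parameter remain valid for a larger one---so your identification of this as the residual technical burden is accurate and matches the paper.
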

\begin{proof}
By Assumption \ref{ass3} and Remark \ref{Rem 2}, for any $\hbar,y\in H$ with $\hbar\neq y$ and any $\bar{\eta}>0$, there exist $n,m\in\mathbb{N}$, and $\{l_i,i=1,2,...,n\}\subset Z_m$,
denoting
$$
q_0=\hbar,\ q_{i}=q_{i-1}+\sigma(q_{i-1},l_{i}),\ i=1,2,...,n,
$$
 such that, for any $\eta\in(0,\frac{\bar{\eta}}{2})$, setting $\eta_k=\frac{\eta}{2^k}$, $k\in \mathbb{N}$, there exist $\{\epsilon^k_i,i=1,2,...,n\}\subset(0,\infty)$ and $\{\eta^k_i,i=0,1,...,n\}\subset(0,\infty)$ satisfying
 \begin{itemize}
  \item  $0<\eta^k_0\leq \eta^k_1\leq ...\leq \eta^k_{n-1}\leq \eta^k_n\leq \eta_k$;
   \item for any $i=0,1,...,n-1$, $\{\tilde{q}+\sigma(\tilde{q},l),\ \tilde{q}\in B(q_i,\eta^k_i),l\in B(l_{i+1},\epsilon^k_{i+1})\}\subset B(q_{i+1},\eta^k_{i+1})$;
   \item $B(q_{n},\eta^k_{n})\subset B(y,\frac{\bar{\eta}}{2})$;
   \item for any $i=1,2,...,n$, $\nu(B(l_{i},\epsilon^k_i))>0$;
   \item for any $i=0,1,2,...,n$, $\lim_{k\rightarrow\infty}\eta^k_i=0$;
   \item for any $i=1,2,...,n$, $\epsilon^k_i$ is non-increasing as $k\rightarrow\infty$;
   \item there exists $m_0\geq m$ such that $\bigcup_{i=1}^n B(l_{i},\epsilon^k_{i})\subset Z_{m_0}$.
 \end{itemize}

Noting that, for any $q\in H, z\in Z$, $\varpi_1>\varpi_2\geq0$, $\beta_1\geq\beta_2\geq0$,
$$
\overline{B(q,\varpi_2)}\subseteq B(q,\varpi_1)
$$
and
$$
\{\tilde{q}+\sigma(\tilde{q},l),\ \tilde{q}\in \overline{B(q,\varpi_2)},l\in B(z,\beta_2)\}
\subseteq
 \{\tilde{q}+\sigma(\tilde{q},l),\ \tilde{q}\in B(q,\varpi_1),l\in B(z,\beta_1)\},
$$
 one can easily choose appropriate integers $k$, $\eta^k_i$ and $\epsilon^k_i$ above to get the positive numbers $\eta_i$, $\eta^{\prime}_i$ and  $\epsilon_i$ required in the statement of the proposition.
\end{proof}

\vskip 0.2cm

Combining Propositions \ref{Prop 6-1}, \ref{Prop 6-01-1} and \ref{Prop 7} together, we arrive at the following.
\begin{prop}\label{Prop 9-1}
Assume that Assumptions \ref{ass1}, \ref{ass2} and \ref{ass3} hold. For any $\hbar,y\in H$ with $\hbar\neq y$ and any $\bar{\eta}>0$, there exist  $n,m\in\mathbb{N}$, and $\{l_i,i=1,2,...,n\}\subset Z_{m}$ such that,
denoting
$$
q_0=\hbar,\ q_{i}=q_{i-1}+\sigma(q_{i-1},l_{i}),\ i=1,2,...,n,
$$
for any $\eta\in(0,\frac{\bar{\eta}}{2})$, there exist $s>0$, $\eta_i\in(0,\eta)$, $\varpi^1_i\in(0,\frac{\eta_i}{2}]$ and $\varpi^2_i\in(0,\frac{\varpi^1_i}{2}]$, $i=0,1,2,...,n$, and $\{\epsilon_i,i=1,2,...,n\}\subset(0,\infty)$, and $m_0\geq m$, such that, for any $s'\in(0,s]$,
 \begin{itemize}
 \item for any $i=0,1,2,...,n-1$, $\{\tilde{q}+\sigma(\tilde{q},l),\ \tilde{q}\in \overline{B(q_i,\eta_i)},l\in B(l_{i+1},\epsilon_{i+1})\}\subset \overline{B(q_{i+1},\varpi^2_{i+1})}$;
   \item $\overline{B(q_{n},\varpi^1_{n})}\subset\overline{B(q_{n},\eta_{n})}\subset B(y,\frac{\bar{\eta}}{2})$;
   \item for any $i=1,2,...,n$, $\nu(B(l_{i},\epsilon_i))>0$;
    \item for any $i=1,2,...,n$, $\epsilon_i$ is non-increasing as $\eta\searrow 0$;
   \item $\bigcup_{i=1}^n B(l_{i},\epsilon_{i})\subset Z_{m_0}$;

   \item for  $i=0,1,2,...,n$,
      \begin{eqnarray*}
       &&\mathbb{P}(\sup_{l\in[0,s']}\|X^{\tilde{h}}(l)-q_i\|_H\leq \varpi^1_i)>0,\ \forall\ {\tilde{h}}\in \overline{B(q_i,\varpi^2_i)};\\
       &&\mathbb{P}(\sup_{l\in[0,s']}\|X^{\tilde{h}}(l)-q_i\|_H\leq \eta_i)>0,\ \forall\ {\tilde{h}}\in \overline{B(q_i,\varpi^1_i)};
      \end{eqnarray*}
      and
       \begin{eqnarray*}
       &&\mathbb{P}(\sup_{l\in[0,s']}\|X^{\tilde{h}}_{m_0}(l)-q_i\|_H\leq \varpi^1_i)>0,\ \forall\ {\tilde{h}}\in \overline{B(q_i,\varpi^2_i)};\\
       &&\mathbb{P}(\sup_{l\in[0,s']}\|X_{m_0}^{\tilde{h}}(l)-q_i\|_H\leq \eta_i)>0,\ \forall\ {\tilde{h}}\in \overline{B(q_i,\varpi^1_i)}.
      \end{eqnarray*}

 \end{itemize}

\end{prop}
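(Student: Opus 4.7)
The plan is to consolidate the three preceding propositions. Proposition~\ref{Prop 7} supplies the combinatorial jump skeleton (the integers $n,m$, the jumps $l_i$, the states $q_i$, and a two-scale chain of ball-to-ball containments), while Propositions~\ref{Prop 6-1} and~\ref{Prop 6-01-1} supply the ``stay in a ball'' probability estimates for $X^{\tilde h}$ and $X^{\tilde h}_{m_0}$, uniformly in $\tilde h$ near each site. The task is to refine these data into the two-level nesting $\varpi^2_i\subset\varpi^1_i\subset\eta_i$ required by the conclusion, so that probability estimates hold at two levels around each $q_i$ and the jumping step carries the inner $\eta_i$-ball of one site into the even smaller $\varpi^2_{i+1}$-ball of the next.

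First I would fix, independently of $\eta$, the integers $n,m$, the points $l_1,\ldots,l_n$ and the states $q_0=\hbar,\ q_i=q_{i-1}+\sigma(q_{i-1},l_i)$ furnished by Proposition~\ref{Prop 7}, together with the per-site thresholds $\eta_{q_i}$ and $\eta^{m_0}_{q_i}$ below which Propositions~\ref{Prop 6-1} and~\ref{Prop 6-01-1} are available at each $q_i$. Given $\eta\in(0,\bar{\eta}/2)$, the construction proceeds by a downward sweep $i=n,n-1,\ldots,0$. At each site $q_i$, once $\eta_i$ has been fixed, I would apply Propositions~\ref{Prop 6-1} and~\ref{Prop 6-01-1} twice: first at target $\eta_i$ to produce $\varpi^1_i$ as the smaller of the two resulting ``$\varpi_0$'' radii, then at target $\varpi^1_i$ to produce $\varpi^2_i$ in the same manner. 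This yields the two-level probability estimates at $q_i$. Finally $s$ is taken as the minimum of all four resulting time parameters across all $i$; since shrinking the time interval only strengthens the ``stay in a ball'' event, the four probability inequalities then hold uniformly for every $s'\in(0,s]$.

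The main obstacle is the one-jump containment $\{\tilde q+\sigma(\tilde q,l):\tilde q\in\overline{B(q_i,\eta_i)},\ l\in B(l_{i+1},\epsilon_{i+1})\}\subset\overline{B(q_{i+1},\varpi^2_{i+1})}$, since a priori Proposition~\ref{Prop 7} only guarantees the coarser inclusion into $\overline{B(q_{i+1},\hat\eta_{i+1})}$. I would handle this via the downward-sweep order: first set $\eta_n$ small enough that $\overline{B(q_n,\eta_n)}\subset B(y,\bar{\eta}/2)$ and compute the corresponding $\varpi^1_n,\varpi^2_n$; then at each successive step, invoke Proposition~\ref{Prop 7} (ultimately Assumption~\ref{ass3}) to select $\eta_{i-1}$ and $\epsilon_i$ small enough that the $l_i$-jump image from $\overline{B(q_{i-1},\eta_{i-1})}$ lies inside $\overline{B(q_i,\varpi^2_i)}$. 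This is possible because the inner radii in Proposition~\ref{Prop 7} can be made arbitrarily small (by enlarging the auxiliary index $k$ in its proof), and the $\nu$-positivity, $Z_{m_0}$-inclusion, and $\epsilon_i$-monotonicity bullets are inherited directly. Iterating down to $i=0$ furnishes all remaining $\eta_i,\varpi^1_i,\varpi^2_i,\epsilon_i$ with the required containment at every step, completing the construction.
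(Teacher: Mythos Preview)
Your proposal is correct and follows the paper's approach; the paper's own proof is a single sentence (``Combining Propositions~\ref{Prop 6-1}, \ref{Prop 6-01-1} and~\ref{Prop 7} together, we arrive at the following''), and your downward sweep is a legitimate way to flesh out that combination. One minor point to tighten: you invoke the threshold $\eta^{m_0}_{q_i}$ from Proposition~\ref{Prop 6-01-1} before $m_0$ is determined, yet $m_0$ ultimately depends on the $\epsilon_i$'s produced by your sweep; this apparent circularity dissolves once you observe that the $\epsilon_i$'s are non-increasing in $\eta$, so you may fix $m_0$ once and for all using the $\epsilon_i$'s at some reference scale (e.g.\ $\eta=\bar\eta/4$) and every subsequent, smaller $\epsilon_i$ will still satisfy $B(l_i,\epsilon_i)\subset Z_{m_0}$.
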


\noindent{\bf Proof of Theorem \ref{thmmulti}}
\vskip 0.2cm
\begin{proof}
We will prove that, for any $x,y\in H$, $T>0$ and $\kappa>0$,
\begin{eqnarray}\label{eq bukeyue}
\mathbb{P}(X^x(T)\in B(y,\kappa))>0.
\end{eqnarray}
 Fix now $x,y\in H$, $T>0$ and $\kappa>0$. Recall
$$
\tau_{\tilde{h},y}^\eta=\inf\{t\geq 0: X^{\tilde{h}}(t)\not\in B(y,\eta)\}.
$$
By Assumption 2.2, there exists $\eta_y>0$ such that, for any $\eta\in(0,\eta_y]$, there exists $(\epsilon_y^\eta,t_y^\eta)\in (0,\frac{\eta}{2}]\times (0,\infty)$ satisfying, for any $\tilde{h}\in B(y,\epsilon_y^\eta)$,
\begin{eqnarray*}
\mathbb{P}(\tau_{\tilde{h},y}^\eta\geq t_y^\eta)>0.
\end{eqnarray*}
Without loss of generality, we may assume
$$
\kappa<\eta_y.
$$
Now, let in particular $\eta=\frac{\kappa}{4}$ to get a pair $(\epsilon_y,t_y)\in (0,\frac{\kappa}{8}]\times (0,\infty)$ such that  for any $\tilde{h}\in B(y,\epsilon_y)$,
\begin{eqnarray*}
\mathbb{P}(\tau_{\tilde{h},y}^{\frac{\kappa}{4}}\geq t_y)>0.
\end{eqnarray*}
This implies that, for any $\tilde{h}\in B(y,\epsilon_y)$,
\begin{eqnarray}\label{eq bky 01}
\mathbb{P}(\sup_{s\in[0,\frac{t_y}{2}]}\|X^{\tilde{h}}(s)-y\|_H\leq \frac{\kappa}{4})
>0.
\end{eqnarray}
We may also assume $t_y\leq T$.
Set $T_0=T-\frac{t_y}{2}$. In the rest of the proof, we  distinguish the following two cases:

{\bf Case 1:} for any $\epsilon> 0$, $\PP\big(X^{x}(T_0) \in B(y,\epsilon)\big) > 0$;

{\bf Case 2:} there exists $\widehat{\epsilon} > 0$ such that $\PP\big(X^{x}(T_0) \in B(y,\widehat{\epsilon})\big) = 0$.
\vskip 0.2cm

For an $H$-valued measurable mapping
$S$ defined on the probability space $(\Omega,\mathcal{F},\PP)$, we will denote by $\PP^S$ the measure induced by $S$ on $(H,\mathcal{B}(H))$.
\vskip 0.2cm
We first consider the \textbf{Case 1}, namely,
assume that for any $\epsilon> 0$, $\PP\big(X^{x}(T_0) \in B(y,\epsilon)\big) > 0$. In particular, we have
\begin{eqnarray}\label{eq bky case 1 00}
\PP\big(X^{x}(T_0) \in B(y,\frac{\epsilon_y}{2})\big)>0.
\end{eqnarray}

By the Markov property of $\mathbb{X}:=\{X^x,x\in H\}$ and (\ref{eq bky case 1 00}), we have
\begin{eqnarray}\label{eq bky case 1 01}
&&\mathbb{P}(X^x(T)\in B(y,\kappa))\nonumber\\
&\geq&
\mathbb{P}(\{X^x(T_0)\in B(y,\frac{\epsilon_y}{2})\}\cap\{X^x(t)\in B(y,\kappa),\ \forall t\in[T_0,T]\})\nonumber\\
&=&
\int_{B(y,\frac{\epsilon_y}{2})}\PP(X^{\tilde{h}}(t)\in B(y,\kappa),\ \forall t\in[0,T-T_0])\PP^{X^{x}(T_0)}(d\tilde{h})\nonumber\\
&=&
\int_{B(y,\frac{\epsilon_y}{2})}\PP(X^{\tilde{h}}(t)\in B(y,\kappa),\ \forall t\in[0,\frac{t_y}{2}])\PP^{X^{x}(T_0)}(d\tilde{h}).
\end{eqnarray}

In view of (\ref{eq bky 01}), we have $\PP(X^{\tilde{h}}(t)\in B(y,\kappa),\ \forall t\in[0,\frac{t_y}{2}])>0$ for all $\tilde{h}\in B(y,\frac{\epsilon_y}{2})$. It follows from (\ref{eq bky case 1 00}) and (\ref{eq bky case 1 01}) that $\mathbb{P}(X^x(T)\in B(y,\kappa))>0$, completing the proof of the theorem in the \textbf{Case 1}.
\vskip 0.2cm

Next we consider the \textbf{Case 2}, namely,  assume that there exists $\widehat{\epsilon} > 0$ such that
\begin{eqnarray}\label{eq bky case 2 01}
\PP\big(X^{x}(T_0) \in B(y,\widehat{\epsilon})\big) = 0,
\end{eqnarray}
which implies that there exists $\zeta\not\in B(y,\widehat{\epsilon})$ such that for any $\rho>0$
\begin{eqnarray}\label{eq bky case 2 02-01}
\PP\big(X^{x}(T_0) \in B(\zeta,\rho)\big) > 0.
\end{eqnarray}

By Proposition \ref{Prop 9-1}, we have the following statements.

For the point $\zeta,y\in H$ and $\bar{\eta}=\kappa_0:=\frac{\epsilon_y}{2}$,
\begin{itemize}
\item[\textbf{(C)}]
there exist $n^{\zeta},m^{\zeta}\in\mathbb{N}$, and $\{l^{\zeta}_i,i=1,2,...,n^{\zeta}\}\subset Z_{m^{\zeta}}$,
denoting
$$
q^{\zeta}_0=\zeta,\ q^{\zeta}_{i}=q^{\zeta}_{i-1}+\sigma(q^{\zeta}_{i-1},l^{\zeta}_{i}),\ i=1,2,...,n^{\zeta},
$$
 such that, for $\eta=\frac{\kappa_0}{8}$, there exist $s^{\zeta}>0$, $\eta^{\zeta}_i\in(0,\eta)$, $\varpi^{1,{\zeta}}_i\in(0,\frac{\eta^{\zeta}_i}{2}]$ and $\varpi^{2,{\zeta}}_i\in(0,\frac{\varpi^{1,{\zeta}}_i}{2}]$, $i=0,1,2,...,n^{\zeta}$, and $\{\epsilon^{\zeta}_i,i=1,2,...,n^{\zeta}\}\subset(0,\infty)$, and $m^{\zeta}_0\geq m^{\zeta}$, such that, for $s'=s_0:=\frac{t_y}{4n^\zeta}\wedge \frac{s^\zeta}{2}$,
 \begin{itemize}
 \item[\textbf{(C-1)}] for any $i=0,1,2,...,n^{\zeta}-1$, $\{\tilde{q}+\sigma(\tilde{q},l),\ \tilde{q}\in \overline{B(q^{\zeta}_i,\eta^{\zeta}_i)},l\in B(l^{\zeta}_{i+1},\epsilon^{\zeta}_{i+1})\}\subset \overline{B(q^{\zeta}_{i+1},\varpi^{2,{\zeta}}_{i+1})}$;
   \item[\textbf{(C-2)}] $\overline{B(q^{\zeta}_{n^{\zeta}},\varpi^{1,{\zeta}}_{n^{\zeta}})}
                             \subset
                          \overline{B(q^{\zeta}_{n^{\zeta}},\eta^{\zeta}_{n^{\zeta}})}
                               \subset
                          B(y,\frac{\bar{\eta}}{2})=B(y,\frac{\epsilon_y}{4})$;
   \item[\textbf{(C-3)}] for any $i=1,2,...,n^{\zeta}$, $\nu(B(l^{\zeta}_{i},\epsilon^{\zeta}_i))>0$;
    \item[\textbf{(C-4)}]  for any $i=1,2,...,n^{\zeta}$, $\epsilon^{\zeta}_i$ is non-increasing as $\eta\searrow 0$;
   \item[\textbf{(C-5)}] $\bigcup_{i=1}^{n^{\zeta}} B(l^{\zeta}_{i},\epsilon^{\zeta}_{i})\subset Z_{m^{\zeta}_0}$;

   \item[\textbf{(C-6)}] for any $i=0,1,2,...,n^{\zeta}$,
      $$
       \mathbb{P}(\sup_{l\in[0,s_0]}\|X^{\tilde{h}}(l)-q^{\zeta}_i\|_H\leq \varpi^{1,{\zeta}}_i)>0,\ \ \forall {\tilde{h}}\in \overline{B(q^{\zeta}_i,\varpi^{2,{\zeta}}_i)};
       $$
       $$
       \mathbb{P}(\sup_{l\in[0,s_0]}\|X^{\tilde{h}}(l)-q^{\zeta}_i\|_H\leq \eta^{\zeta}_i)>0,
       \ \ \forall {\tilde{h}}\in \overline{B(q^{\zeta}_i,\varpi^{1,{\zeta}}_i)};
      $$
      and
       $$
       \mathbb{P}(\sup_{l\in[0,s_0]}\|X^{\tilde{h}}_{m^{\zeta}_0}(l)-q^{\zeta}_i\|_H\leq \varpi^{1,{\zeta}}_i)>0,
       \ \ \forall {\tilde{h}}\in \overline{B(q^{\zeta}_i,\varpi^{2,{\zeta}}_i)};
       $$
       $$
       \mathbb{P}(\sup_{l\in[0,s_0]}\|X_{m^{\zeta}_0}^{\tilde{h}}(l)-q^{\zeta}_i\|_H\leq \eta^{\zeta}_i)>0,
       \ \ \forall {\tilde{h}}\in \overline{B(q^{\zeta}_i,\varpi^{1,{\zeta}}_i)}.
      $$

 \end{itemize}
 \end{itemize}

 By (\ref{eq bky case 2 02-01}) and $\varpi^{2,{\zeta}}_0>0$, we have
\begin{eqnarray}\label{eq bky case 2 02}
\PP(X^{x}(T_0)\in B(\zeta,\varpi^{2,{\zeta}}_0))>0.
\end{eqnarray}

\vskip 0.2cm
 Set $T_i:=T_0+i s_0$. We will prove that, for any $i=1,2,...,n^\zeta$,
 \begin{eqnarray}\label{eq bky case 2 03}
 \PP(\{X^{x}(T_0)\in B(\zeta,\varpi^{2,{\zeta}}_0)\}\bigcap_{j=1}^i \{X^{x}(T_j)\in \overline{B(q_j^\zeta,\varpi^{1,{\zeta}}_j)}\})>0.
 \end{eqnarray}
 By the Markov property of $\mathbb{X}$,
  \begin{eqnarray}\label{eq bky case 2 04}
 &&\PP(\{X^{x}(T_0)\in B(\zeta,\varpi^{2,{\zeta}}_0)\}\bigcap\{X^{x}(T_1)\in \overline{B(q_1^\zeta,\varpi^{1,{\zeta}}_1)}\})\nonumber\\
 &=&\int_{B(\zeta,\varpi^{2,{\zeta}}_0)}\PP(X^\hbar(s_0)\in \overline{B(q_1^\zeta,\varpi^{1,{\zeta}}_1)})\PP^{X^{x}(T_0)}(d\hbar).
 \end{eqnarray}
  In view of (\ref{eq bky case 2 02}), to prove (\ref{eq bky case 2 03}) with $i=1$, it is sufficient to show that for any $\hbar\in B(\zeta,\varpi^{2,{\zeta}}_0)$,
  \begin{eqnarray}\label{eq bky case 2 05}
 \PP(X^\hbar(s_0)\in \overline{B(q_1^\zeta,\varpi^{1,{\zeta}}_1)})>0.
 \end{eqnarray}
 Let  $\sigma_1=\inf\{t\geq0: \int_0^t\int_{B(l^{\zeta}_{1},
 \epsilon^{\zeta}_{1})}N(dz,ds)=1\}$ be the first jumping time of the Poisson process $N(B(l^{\zeta}_{1},
 \epsilon^{\zeta}_{1}), [0,t])$. $\sigma_1$ has the exponential distribution with parameter $0<\nu(B(l^{\zeta}_{1},\epsilon^{\zeta}_{1}))<\nu(Z_{m^{\zeta}_0})<\infty$. In particular, we have
 \begin{eqnarray}\label{eq bky jump 1 st 01}
 \PP(\sigma_1\in(0,s_0])>0.
 \end{eqnarray}

 By the strong Markov property of $\mathbb{X}$, for any $\hbar\in B(\zeta,\varpi^{2,{\zeta}}_0)$,
   \begin{eqnarray}\label{eq bky case 2 06}
 &&\PP(X^\hbar(s_0)\in \overline{B(q_1^\zeta,\varpi^{1,{\zeta}}_1)})\nonumber\\
 &\geq&
  \PP(\{X^\hbar(s_0)\in \overline{B(q_1^\zeta,\varpi^{1,{\zeta}}_1)}\}
       \cap
       \{\sigma_1\in(0,s_0]\}
       \cap
       \{X^\hbar(\sigma_1)\in \overline{B(q_1^\zeta,\varpi^{2,{\zeta}}_1)}\})\\
 &\geq&
 \PP(
       \{\sigma_1\in(0,s_0]\}
       \cap
       \{X^\hbar(\sigma_1)\in \overline{B(q_1^\zeta,\varpi^{2,{\zeta}}_1)}\}
       \cap
       \{\sup_{s\in[\sigma_1,\sigma_1+s_0]}\|X^\hbar(s)-q_1^\zeta\|_H\leq\varpi^{1,{\zeta}}_1\}\nonumber
       )\\
 &=&
   \EE\Big(
       \EE\Big(
       1_{(0,s_0]}(\sigma_1)
       1_{\overline{B(q_1^\zeta,\varpi^{2,{\zeta}}_1)}}(X^\hbar(\sigma_1))
       1_{[0,\varpi^{1,{\zeta}}_1]}(\sup_{s\in[\sigma_1,\sigma_1+s_0]}\|X^\hbar(s)-q_1^\zeta\|_H)\nonumber
       \Big|\mathcal{F}_{\sigma_1}
       \Big)
   \Big)\\
    &=&
   \EE\Big(
1_{(0,s_0]}(\sigma_1)
       1_{\overline{B(q_1^\zeta,\varpi^{2,{\zeta}}_1)}}(X^\hbar(\sigma_1))
       \EE\Big(
       1_{[0,\varpi^{1,{\zeta}}_1]}(\sup_{s\in[0,s_0]}\|X^{\tilde{\hbar}}(s)-q_1^\zeta\|_H)\nonumber
       \Big|\tilde{\hbar}=X^\hbar(\sigma_1)
       \Big)
   \Big).
 \end{eqnarray}
 If we set $f(\tilde{\hbar})=\EE\Big(
       1_{[0,\varpi^{1,{\zeta}}_1]}(\sup_{s\in[0,s_0]}\|X^{\tilde{\hbar}}(s)-q_1^\zeta\|_H)\nonumber
       \Big)$
       , then
       \begin{eqnarray}\label{def condition expec}
       \EE\Big(
       1_{[0,\varpi^{1,{\zeta}}_1]}(\sup_{s\in[0,s_0]}\|X^{\tilde{\hbar}}(s)-q_1^\zeta\|_H)
       \Big|\tilde{\hbar}=X^\hbar(\sigma_1)
       \Big)
       =
       f(X^\hbar(\sigma_1)).
       \end{eqnarray}

  Using \textbf{(C-6)} for $i=1$, we have, for any $\tilde{\hbar}\in \overline{B(q_1^\zeta,\varpi^{2,{\zeta}}_1)}$,
 \begin{eqnarray*}
       \EE\Big(
       1_{[0,\varpi^{1,{\zeta}}_1]}(\sup_{s\in[0,s_0]}\|X^{\tilde{\hbar}}(s)-q_1^\zeta\|_H)
       \Big)>0.
 \end{eqnarray*}
 So $f(X^\hbar(\sigma_1))>0$ for $X^\hbar(\sigma_1)\in \overline{B(q_1^\zeta,\varpi^{2,{\zeta}}_1)}$.
 Hence, to prove (\ref{eq bky case 2 05}), in view of (\ref{eq bky case 2 06})  we only need to show, for any $\hbar\in B(\zeta,\varpi^{2,{\zeta}}_0)$,
 \begin{eqnarray}\label{eq bky case 2 07}
 \EE\Big(
     1_{(0,s_0]}(\sigma_1)
       1_{\overline{B(q_1^\zeta,\varpi^{2,{\zeta}}_1)}}(X^\hbar(\sigma_1))
     \Big)
     =
     \PP(\{\sigma_1\in(0,s_0]\}
       \cap
       \{X^\hbar(\sigma_1)\in \overline{B(q_1^\zeta,\varpi^{2,{\zeta}}_1)}\})
     >0.
 \end{eqnarray}
 Set $\tau_1=\inf\{t\geq 0:\int_0^t\int_{Z_{m^\zeta_0}\setminus B(l^{\zeta}_{1},\epsilon^{\zeta}_{1})}N(dz,ds)=1\}$.
 Since $\nu(Z_{m^\zeta_0})<\infty$,
  \begin{eqnarray}\label{eq bky jump 1 st 02}
 \PP(\tau_1>s_0)>0.
 \end{eqnarray}
For any $\hbar\in B(\zeta,\varpi^{2,{\zeta}}_0)$,
  \begin{eqnarray}\label{eq bky case 2 08}
     &&\PP(\{\sigma_1\in(0,s_0]\}
       \cap
       \{X^\hbar(\sigma_1)\in \overline{B(q_1^\zeta,\varpi^{2,{\zeta}}_1)}\})\nonumber\\
     &\geq&
     \PP(\{\sigma_1\in(0,s_0]\}
         \cap
          \{\tau_1>s_0\}
         \cap
       \{X^\hbar(\sigma_1)\in \overline{B(q_1^\zeta,\varpi^{2,{\zeta}}_1)}\})\\
        &\geq&
     \PP(\{\sigma_1\in(0,s_0]\}
         \cap
         \{\sup_{s\in[0,\sigma_1)}\|X^\hbar(s)-\zeta\|_H\leq \varpi^{1,{\zeta}}_0)\}
         \cap
          \{\tau_1>s_0\}
         \cap
       \{X^\hbar(\sigma_1)\in \overline{B(q_1^\zeta,\varpi^{2,{\zeta}}_1)}\}).\nonumber
 \end{eqnarray}
 Since $\nu(Z_{m^\zeta_0})<\infty$, $\nu(B(l^{\zeta}_{1},\epsilon^{\zeta}_{1}))<\infty$ and \textbf{(C-5)}, the solution to (\ref{eq o 1}) with $x=\hbar$ satisfies the following equation:
\begin{eqnarray*}
X^\hbar(t) &=&
\hbar + \int_0^t{\cA(X^\hbar(s))ds}+\int_0^t \int_{Z_{m^\zeta_0}^c}\si(X^\hbar(s-), z)\tilde{N}(dz, ds)\notag \\
&&- \int_0^t \int_{Z_{m^\zeta_0}\setminus Z_1}\si(X^\hbar(s), z)\nu(dz)ds
  + \int_0^t \int_{B(l^{\zeta}_{1},\epsilon^{\zeta}_{1})} \si(X^\hbar(s-), z)N(dz, ds)\\
&&+ \int_0^t \int_{Z_{m^\zeta_0}\setminus B(l^{\zeta}_{1},\epsilon^{\zeta}_{1})} \si(X^\hbar(s-), z)N(dz, ds), \quad t\geq 0.
\end{eqnarray*}
Notice that
\begin{eqnarray*}
\int_0^t \int_{B(l^{\zeta}_{1},\epsilon^{\zeta}_{1})} \si(X^\hbar(s-), z)N(dz, ds)=0,\text{ on }\{t\in[0,\sigma_1)\}, \ \ \ \  \PP\text{-a.s};
\end{eqnarray*}
and
 \begin{eqnarray*}
\int_0^t \int_{Z_{m^\zeta_0}\setminus B(l^{\zeta}_{1},\epsilon^{\zeta}_{1})} \si(X^\hbar(s-), z)N(dz, ds)=0,
\text{ on }\{t\in[0,\tau_1)\}, \ \ \  \PP\text{-a.s}.
\end{eqnarray*}
 Recalling the solution  $X_m$ introduced in (\ref{meq1-Zhai}) we conclude that for $s<\sigma_1\leq s_0<\tau_1$, $X^\hbar(s)=X^\hbar_{m^\zeta_0}(s)$. Moreover, by \textbf{(C-1)} with $i=0$, we see that
 \begin{eqnarray}
 &&\{\sup_{s\in[0,\sigma_1)}\|X^\hbar(s)-\zeta\|_H\leq \varpi^{1,{\zeta}}_0\}\nonumber\\
 &\subset& \{X^\hbar(\sigma_1-):=\lim_{s\uparrow\sigma_1}X^\hbar(s)\in  \overline{B(\zeta,\varpi^{1,{\zeta}}_0)}\}\nonumber\\
 &\subset& \{X^\hbar(\sigma_1)=X^\hbar(\sigma_1-)+(X^\hbar(\sigma_1)-X^\hbar(\sigma_1-))\in
 \overline{B(q^{\zeta}_{1},\varpi^{2,{\zeta}}_{1})}\}.
 \end{eqnarray}
 Here we have used the fact that
 \begin{eqnarray*}
 &&X^\hbar(\sigma_1)-X^\hbar(\sigma_1-)\\
 &=&
 \int_0^{\sigma_1} \int_{Z_{m^\zeta_0}^c}\si(X^\hbar(s-), z)\tilde{N}(dz, ds)
+
\int_0^{\sigma_1} \int_{B(l^{\zeta}_{1},\epsilon^{\zeta}_{1})} \si(X^\hbar(s-), z)N(dz, ds)\\
&&+
\int_0^{\sigma_1} \int_{Z_{m^\zeta_0}\setminus B(l^{\zeta}_{1},\epsilon^{\zeta}_{1})} \si(X^\hbar(s-), z)N(dz, ds)\\
&&-
\lim_{t\uparrow\sigma_1}\Big(
\int_0^t \int_{Z_{m^\zeta_0}^c}\si(X^\hbar(s-), z)\tilde{N}(dz, ds)
+
\int_0^t \int_{B(l^{\zeta}_{1},\epsilon^{\zeta}_{1})} \si(X^\hbar(s-), z)N(dz, ds)\\
&&\ \ \ \ \ \ \ \ \ \ \ \ +
\int_0^t \int_{Z_{m^\zeta_0}\setminus B(l^{\zeta}_{1},\epsilon^{\zeta}_{1})} \si(X^\hbar(s-), z)N(dz, ds)
\Big)
\\
&=&
\int_0^{\sigma_1} \int_{B(l^{\zeta}_{1},\epsilon^{\zeta}_{1})} \si(X^\hbar(s-), z)N(dz, ds)\\
 &\in&
 \{\sigma(X^\hbar(\sigma_1-),l),\ l\in B(l^{\zeta}_{1},\epsilon^{\zeta}_{1})\}.
 \end{eqnarray*}
 We therefore arrive at
  \begin{eqnarray}\label{eq bky jump 1 01}
&&\{\sigma_1\in(0,s_0]\}
         \cap
         \{\sup_{s\in[0,\sigma_1)}\|X^\hbar(s)-\zeta\|_H\leq \varpi^{1,{\zeta}}_0)\}
         \cap
          \{\tau_1>s_0\}
         \cap
       \{X^\hbar(\sigma_1)\in \overline{B(q_1^\zeta,\varpi^{2,{\zeta}}_1)}\}\nonumber\\
&=&
    \{\sigma_1\in(0,s_0]\}
         \cap
         \{\sup_{s\in[0,\sigma_1)}\|X^\hbar(s)-\zeta\|_H\leq \varpi^{1,{\zeta}}_0)\}
         \cap
          \{\tau_1>s_0\}\nonumber\\
&=&
    \{\sigma_1\in(0,s_0]\}
         \cap
         \{\sup_{s\in[0,\sigma_1)}\|X^\hbar_{m^\zeta_0}(s)-\zeta\|_H\leq \varpi^{1,{\zeta}}_0)\}
         \cap
          \{\tau_1>s_0\}\nonumber\\
&\supseteq&
    \{\sigma_1\in(0,s_0]\}
         \cap
         \{\sup_{s\in[0,s_0]}\|X^\hbar_{m^\zeta_0}(s)-\zeta\|_H\leq \varpi^{1,{\zeta}}_0)\}
         \cap
          \{\tau_1>s_0\}.
\end{eqnarray}
 Similar to the proof of Proposition \ref{Prop 3}, because the following events are determined by the jumps of the Poisson random measure on disjoints subsets,
 \begin{eqnarray}\label{eq bky jump 1 02}
 \{\sigma_1\!\in\!(0,s_0]\},\
         \{\!\!\sup_{s\in[0,s_0]}\!\!\!\|X^\hbar_{m^\zeta_0}(s)-\zeta\|_H\!\leq\! \varpi^{1,{\zeta}}_0)\} \text{ and }
          \{\tau_1\!>\!s_0\}
 \end{eqnarray}
  are mutually independent.
 Combining (\ref{eq bky jump 1 st 01}), (\ref{eq bky jump 1 st 02}), (\ref{eq bky case 2 08}), (\ref{eq bky jump 1 01}), and (\ref{eq bky jump 1 02}) together, we obtain that,  for any $\hbar\in B(\zeta,\varpi^{2,{\zeta}}_0)$,
  \begin{eqnarray}\label{eq bky jump 1 03}
   &&\PP(\{\sigma_1\in(0,s_0]\}
       \cap
       \{X^\hbar(\sigma_1)\in \overline{B(q_1^\zeta,\varpi^{2,{\zeta}}_1)}\})\nonumber\\
     &\geq&
     \PP(\sigma_1\in(0,s_0])
        \PP(\sup_{s\in[0,s_0]}\|X^\hbar_{m^\zeta_0}(s)-\zeta\|_H\leq \varpi^{1,{\zeta}}_0))
         \PP(\tau_1>s_0)\nonumber\\
        &>&0,
 \end{eqnarray}
 which proves (\ref{eq bky case 2 07}).
 \textbf{(C-6)} has been used in the last inequality.
 We have proved (\ref{eq bky case 2 03}) for $i=1$.

Now we prove (\ref{eq bky case 2 03}) with $i=2$. By the Markov property of $\mathbb{X}$,
 \begin{eqnarray}\label{eq bky case 2 i2 01}
 &&\PP(\{X^{x}(T_0)\in B(\zeta,\varpi^{2,{\zeta}}_0)\}\bigcap_{j=1}^2 \{X^{x}(T_j)\in \overline{B(q_j^\zeta,\varpi^{1,{\zeta}}_j)}\})\nonumber\\
 &=&
    \EE\Big(
        \EE\Big(
            1_{B(\zeta,\varpi^{2,{\zeta}}_0)}(X^{x}(T_0))
            1_{\overline{B(q_1^\zeta,\varpi^{1,{\zeta}}_1)}}(X^{x}(T_1))
            1_{\overline{B(q_2^\zeta,\varpi^{1,{\zeta}}_2)}}(X^{x}(T_2))
            \Big|\mathcal{F}_{T_1}
            \Big)
       \Big)\nonumber\\
  &=&
    \EE\Big(
    1_{B(\zeta,\varpi^{2,{\zeta}}_0)}(X^{x}(T_0))
            1_{\overline{B(q_1^\zeta,\varpi^{1,{\zeta}}_1)}}(X^{x}(T_1))
        \EE\Big(
            1_{\overline{B(q_2^\zeta,\varpi^{1,{\zeta}}_2)}}(X^{x}(T_2))
            \Big|\mathcal{F}_{T_1}
            \Big)
       \Big)\\
   &=&
    \EE\Big(
            1_{B(\zeta,\varpi^{2,{\zeta}}_0)}(X^{x}(T_0))
            1_{\overline{B(q_1^\zeta,\varpi^{1,{\zeta}}_1)}}(X^{x}(T_1))
        \EE\Big(
            1_{\overline{B(q_2^\zeta,\varpi^{1,{\zeta}}_2)}}(X^{\hbar}(s_0))
            \Big|\hbar=X^{x}(T_1)
            \Big)
       \Big).\nonumber
 \end{eqnarray}
 Here,
 \begin{eqnarray}
 \EE\Big(
            1_{\overline{B(q_2^\zeta,\varpi^{1,{\zeta}}_2)}}(X^{\hbar}(s_0))
            \Big|\hbar=X^{x}(T_1)
            \Big)
            :=\EE\Big(
            1_{\overline{B(q_2^\zeta,\varpi^{1,{\zeta}}_2)}}(X^{\hbar}(s_0))
            \Big)\Big|_{\hbar=X^{x}(T_1)}.
 \end{eqnarray}

(\ref{eq bky case 2 03}) with $i=1$ says that
\small{$$
\EE\Big(
    1_{B(\zeta,\varpi^{2,{\zeta}}_0)}(X^{x}(T_0))
            1_{\overline{B(q_1^\zeta,\varpi^{1,{\zeta}}_1)}}(X^{x}(T_1))
    \Big)
\!=\!
 \PP(\{X^{x}(T_0)\!\in\! B(\zeta,\varpi^{2,{\zeta}}_0)\}\cap \{X^{x}(T_1)\!\in\! \overline{B(q_1^\zeta,\varpi^{1,{\zeta}}_1)}\})
\!>\!0.
$$}
In view of (\ref{eq bky case 2 i2 01}), to prove (\ref{eq bky case 2 03}) for $i=2$, we only need to show that, for any $\hbar\in \overline{B(q_1^\zeta,\varpi^{1,{\zeta}}_1)}$,
\begin{eqnarray*}
\EE\Big(   1_{\overline{B(q_2^\zeta,\varpi^{1,{\zeta}}_2)}}(X^{\hbar}(s_0))
            \Big)
=
\PP\Big(
        X^{\hbar}(s_0)\in \overline{B(q_2^\zeta,\varpi^{1,{\zeta}}_2)}
    \Big)>0.
\end{eqnarray*}
This can be proved similarly as the proof of
 (\ref{eq bky case 2 05}). Thus we have proved
(\ref{eq bky case 2 03}) also for $i=2$. Following a recursive procedure we are able to prove (\ref{eq bky case 2 03}) for
any $i=1,2,...,n^\zeta$.

Now we will prove that
\begin{eqnarray}\label{eq bky case 2 f1}
&&\hspace{-1truecm}\PP\Big(\!
    \{X^{x}(T_0)\!\in\! B(\zeta,\varpi^{2,{\zeta}}_0)\}
    \!\!\bigcap_{j=1}^{n^\zeta-1}\!\!
    \{X^{x}(T_j)\!\in\! \overline{B(q_j^\zeta,\varpi^{1,{\zeta}}_j)}\}\!
    \bigcap
      \{X^{x}(T_{n^\zeta})\!\in\! B(y,\frac{\epsilon_y}{2})\}
    \bigcap
    \{X^{x}(T)\in B(y,\kappa)\}
\!\Big)\nonumber\\
&>&0.
\end{eqnarray}

Recall $s_0=\frac{t_y}{4n^\zeta}\wedge \frac{s^\zeta}{2}$, $\kappa_0=\frac{\epsilon_y}{2}$  and $T_0=T-\frac{t_y}{2}$. By \textbf{(C-2)} and (\ref{eq bky case 2 03}),
we have
\begin{eqnarray}\label{eq bky case 2 f2}
&&\PP\Big(
    \{X^{x}(T_0)\in B(\zeta,\varpi^{2,{\zeta}}_0)\}
    \bigcap_{j=1}^{n^\zeta-1}
    \{X^{x}(T_j)\in \overline{B(q_j^\zeta,\varpi^{1,{\zeta}}_j)}\}
    \bigcap
      \{X^{x}(T_{n^\zeta})\in B(y,\frac{\epsilon_y}{2})\}
\Big)\nonumber\\
&\geq&
\PP\Big(
    \{X^{x}(T_0)\in B(\zeta,\varpi^{2,{\zeta}}_0)\}
    \bigcap_{j=1}^{n^\zeta}
    \{X^{x}(T_j)\in \overline{B(q_j^\zeta,\varpi^{1,{\zeta}}_j)}\}
\Big)\nonumber\\
&>&
0.
\end{eqnarray}
Noticing that $T_{n^\zeta}=T_0+n^\zeta s_0\in(T_0,T)$ and applying the Markov property of $\mathbb{X}$ again,
we have
\small{
\begin{eqnarray}\label{eq bky case 2 f3}
&&\hspace{-1truecm}\PP\Big(\!
    \{X^{x}(T_0)\!\in\! B(\zeta,\varpi^{2,{\zeta}}_0)\}
    \!\!\bigcap_{j=1}^{n^\zeta-1}\!\!
    \{X^{x}(T_j)\!\in\! \overline{B(q_j^\zeta,\varpi^{1,{\zeta}}_j)}\}
    \bigcap
      \{X^{x}(T_{n^\zeta})\!\in\! B(y,\frac{\epsilon_y}{2})\}
    \bigcap
    \{X^{x}(T)\in B(y,\kappa)\}
\!\Big)\nonumber\\
&=&
  \EE\Big(
         1_{B(\zeta,\varpi^{2,{\zeta}}_0)}(X^{x}(T_0))
         \prod_{j=1}^{n^\zeta-1}1_{\overline{B(q_j^\zeta,\varpi^{1,{\zeta}}_j)}}(X^{x}(T_j))
         1_{B(y,\frac{\epsilon_y}{2})}(X^{x}(T_{n^\zeta}))\nonumber\\
         &&\hspace{7truecm}\cdot
           \EE\Big(
            1_{B(y,\kappa)}(X^{\hbar}(T-T_{n^\zeta}))
            \Big|\hbar=X^{x}(T_{n^\zeta})
            \Big)
      \Big).
\end{eqnarray}}
In view of (\ref{eq bky case 2 f2}) and (\ref{eq bky case 2 f3}), to prove (\ref{eq bky case 2 f1}), we only need to prove that,
for any $\hbar\in B(y,\frac{\epsilon_y}{2})$,
\begin{eqnarray}\label{eq bky case 2 f4}
           \EE\Big(
            1_{B(y,\kappa)}(X^{\hbar}(T-T_{n^\zeta}))
      \Big)
=
\PP\Big(
      X^{\hbar}(T-T_{n^\zeta})\in B(y,\kappa)
    \Big)
>0.
\end{eqnarray}
As $T-T_{n^\zeta}\in(0,\frac{t_y}{2})$, (\ref{eq bky case 2 f4}) follows from the choice of $t_y$, see (\ref{eq bky 01}). Hence (\ref{eq bky case 2 f1}) is established, which in particular yields
$$
\PP(
    X^{x}(T)\in B(y,\kappa)
)
>0.
$$

The proof is finished in the \textbf{Case 2}, completing the whole proof of Theorem \ref{thmmulti}.
\end{proof}

\section{Applications I: irreducibility}

In this section, we provide applications of our main results to various SDEs and SPDEs. Since  Assumptions \ref{ass2} and \ref{ass3} are basically independent of each other, this section is divided into three parts: Subsection 4.1 presents examples of the additive driving noises  satisfying   Assumption \ref{assv} (the corresponding Assumption \ref{ass3} in the case of the additive noise). Subsection 4.2 gives examples of multiplicative driving noises  satisfying   Assumption \ref{ass3}. Subsections 4.3-4.6 are to provide examples of physical models  satisfying   Assumption \ref{ass2}.  Combination of Subsections 4.1-4.6. produces plenty of examples for the irreducibility of SDEs and SPDEs driven by pure jump L\'evy noise, including  many interesting physical models.

\subsection{Sufficient conditions and examples for Assumption \ref{assv}}\label{Sub 4-1}
For any measure $\rho$ on $H$, its support $S_\rho=S(\rho)$ is defined to be the set of $x\in H$ such that $\rho(G)>0$ for any open set $G$ containing $x$.
Set
\begin{equation}\label{a-1}
H_0:=\Big\{\sum_{i=1}^n m_ia_i,\ n,m_1,...,m_n\in\mathbb{N},\ a_i\in S_\nu\Big\}.
\end{equation}
It is not difficult to see that Assumption \ref{assv} holds if and only if
$H_0$ is dense in $H$.
\vskip 0.2cm

 Assumption \ref{assv}  actually places very mild conditions on the intensity measures $\nu$ of the L\'evy processes.
The examples we can include are considerably  more general than that considered in the literature where  the irreducibility of SPDEs/SDEs driven by pure jump noise were studied. In this subsection, we give several explanatory examples that satisfy Assumption \ref{assv}.

\vskip 0.2cm\begin{exmp}\label{Prop Ex R}
Let $H=\mathbb{R}$.
The intensity measure $\nu$ of the L\'evy process satisfies Assumption \ref{assv}, namely, $H_0$ defined in (\ref{a-1}) is dense in $\mathbb{R}$, if one of the following conditions is satisfied:
\begin{itemize}
  \item[(1)] There exist $a<0$, $b>0$ and $c_n\neq0, n\in\mathbb{N},$ such that $\lim_{n\rightarrow\infty}c_n=0$, $\{a, b, c_n, n\in \mathbb{N}\}\subseteq S_\nu$.

  \item[(2)] $\nu(\mathbb{R})=\infty$, and there exist $a>0$ and $b<0$ such that $\{a,b\}\subseteq S_\nu$.

  \item[(3)] Set $S_\nu^+=\{a\in S_\nu:a>0\}$ and $S_\nu^-=\{a\in S_\nu:a<0\}$. There exist $a\in S_\nu^+$ and $b\in S_\nu^-$ such that $a/b$ is an irrational number.

%

%
\end{itemize}

%
\vskip 0.2cm
The proofs of statements  are elementary and omitted here. We stress that
it is easy to find many examples  with $\nu(\mathbb{R})<\infty$ that satisfy  Assumption \ref{assv}. This  means that the driving  L\'evy process $L$ could be a compound Poisson process on $\mathbb{R}$.

\end{exmp}

%
%
%
%

\vskip 0.2cm\begin{exmp}\label{Prop 3.3}
      Let $H=\mathbb{R}^d$, $d\in \mathbb{N}\cup{\{+\infty\}}$. Let $\{e_1,e_2,...,e_d\}$ be an orthonormal basis of $H$.  Let $\{L_i(t),t\geq0\}_{i\in\mathbb{N}}$ be mutually independent one dimensional pure jump L\'evy processes
      with their intensity measures $\nu_i$ satisfying one of the conditions listed in Example \ref{Prop Ex R}.
        Choosing $\beta_i\in \mathbb{R}\setminus\{0\},i\in\mathbb{N}$ such that
       \begin{eqnarray}\label{Eq ITM 01}
       \sum_{i=1}^d\int_{\mathbb{R}}|\beta_ix_i|^2\wedge1\nu_i(dx_i)<\infty.
       \end{eqnarray}
        $L(t)=\sum_{i=1}^d \beta_iL_i(t)e_i, t\geq0$ defines an $H$-valued L\'evy process, and its intensity measure $\nu$ satisfies Assumption \ref{assv}.
   \end{exmp}
%

\vskip 0.2cm
The following example is concerned with the subordination of L\'evy processes, which is an important way
to obtain new L\'evy processes.

\vskip 0.2cm\begin{exmp}\label{Prop exm sub}
Let $H=\mathbb{R}^d, d\in\mathbb{N}\cup\{+\infty\}$.
Let $Z=\{Z_t,t\geq0\}$ be a subordinator (an increasing L\'evy process on $\mathbb{R}$) with L\'evy measure $\rho$, drift $\beta_0$, which satisfy
$$
\beta_0\geq0\text{ and }\int_{(0,\infty)}(1\wedge s)\rho(ds)<\infty.
$$
Let $X=\{X_t,t\geq0\}$ be a L\'evy process on $H$ with intensity measure $\nu_X$. $Z$ and
$X$ are independent. Define
\begin{eqnarray}\label{eq zhai 01}
L_t(\omega)=X_{Z_t(\omega)}(\omega),\ t\geq0,\omega\in\Omega.
\end{eqnarray}

Then $\{L_t,t\geq0\}$ is a L\'evy process on $H$, and its intensity measure $\nu$ satisfies that
$$
\nu(B)=\beta_0\nu_X(B)+\int_{(0,\infty)}\mu^s_X(B)\rho(ds),\ \ B\in\mathcal{B}(H).
$$
Here $\mu^s_X$ is the law of $X_s$. See \cite[Theorem 30.1]{Sato} for details.

 If one of the following conditions holds, then the intensity measure $\nu$ of (\ref{eq zhai 01}) satisfies Assumption \ref{assv} .
\begin{itemize}
  \item one of $\int_{(0,\infty)}(1\wedge s)\rho(ds)$ and $\beta_0$ is not equal to $0$, and Assumption \ref{assv} holds  with
  $\nu$ replaced by $\nu_X$.
  \item $\int_{(0,\infty)}(1\wedge s)\rho(ds)>0$ and $S_{\mu^1_X}$ is dense in $H$.
\end{itemize}

\vskip 0.2cm

Now let $\{Z_t,t\geq0\}$ be a subordinator with L\'evy measure $\rho$ satisfying $\int_{(0,\infty)}(1\wedge s)\rho(ds)>0$. We have the following two concrete examples.

(1) Let $\{X_t,t\geq0\}=\{W_t,t\geq0\}$ be a $Q$-Wiener process on $H$, $Q\in L(H)$ is nonnegative, symmetric, with finite trace and $Ker Q=\{0\}$; here $L(H)$ denotes the set of all bounded linear operators on $H$.

(2) Let $\{X_t,t\geq0\}$ be a L\'evy process introduced in Example \ref{Prop 3.3}.

\end{exmp}

\vskip 0.2cm\begin{exmp}\label{Prop 04-19}

Let $H=\mathbb{R}^d, d\in\mathbb{N}$.  Assume that the  intensity measure $\nu$ of the L\'evy process is absolutely continuous with respect to
the Lebesgue measure $dz$ on $\mathbb{R}^d$, that is, $\nu(dz)=q(z)dz$, for some measurable function $q:\mathbb{R}^d\rightarrow[0,\infty)$. Let $\{e_1,e_2,...,e_d\}$ be an orthonormal basis of $\mathbb{R}^d$.
Assume that $q$ is a continuous function  and that $q(x)>0$ for  $x=e_i,i=1,2,...,d$ and $x=-\sum_{j=1}^de_i$. Then we can easily see that the intensity measure  $\nu$ satisfies Assumption \ref{assv}.

One can find other mild conditions on $q$ such that the intensity measure $\nu$ satisfies Assumption \ref{assv}, even
for the case that $q$ is not a continuous function.

\end{exmp}

\vskip 0.2cm
\begin{rmk}
One can also easily construct many intensity measures in polar coordinates that satisfy Assumption \ref{assv}.
\end{rmk}

\subsection{Examples for Assumption \ref{ass3}}\label{Sub 4-2}

 A simple example is as follows.
\begin{exmp}
Let $H=\mathbb{R}^d, d\in\mathbb{N}\cup\{+\infty\}$.
Assume that the noise term in (\ref{meq1}) has the form:
\begin{eqnarray*}
  &&\int_{Z_1^c}\si(X(t-), z)\tilde{N}(dz,dt) + \int_{Z_1} \si(X(t-), z)N(dz,dt)\\
  &=&
  \int_{Z_1^c}\si_1(X(t-), z)\tilde{N}_1(dz,dt) + \int_{Z_1}\si_1(X(t-), z)N_1(dz,dt)+dL(t),
\end{eqnarray*}
 here $L$ is a L\'evy process on $H$ satisfying Assumption \ref{assv} (see the  examples in Subsection 4.1), $N_1$ can be any Poisson random measure on $Z$, and $L$ and $N_1$ are independent. Then Assumption \ref{ass3} holds.
\end{exmp}

\vskip 0.2cm
\begin{exmp}
Let $H=\mathbb{R}^d, d\in\mathbb{N}\cup\{+\infty\}$.
 Recall $L=\{L_t=W_{Z_t},t\geq 0\}$ in Example \ref{Prop exm sub}, where $W=\{W_t,t\geq0\}$ is a $Q$-Wiener process on $H$, $Q\in L(H)$ is nonnegative, symmetric, with finite trace and $Ker Q=\{0\}$, $Z=\{Z_t,t\geq0\}$ is a subordinator with L\'evy measure $\rho$ satisfying $\int_{(0,\infty)}(1\wedge s)\rho(ds)>0$,  and $W$ and $Z$ are independent. The intensity measure of $L$ is denoted by $\nu$, which satisfies $S_\nu=H$.
         Denote by $N$ the Poisson random measure corresponding to $L$, and $\tilde{N}$ the associated compensated Poisson random measure. Then
         $$
         L_t=\int_0^t\int_{0<\|z\|_H\leq1}zd\tilde{N}(dz,ds)+\int_0^t\int_{\|z\|_H>1}zdN(dz,ds), t\geq0.
         $$

     Denote by  $L_2(Q^{1/2}(H),H)$ the space of all Hilbert-Schmidt operators from $Q^{1/2}(H)$ to $H$ equipped with the
     Hilbert-Schmidt norm. Assume that $\sigma:H\rightarrow L_2(Q^{1/2}(H),H)$ is continuous, and, for any $h\in H$, $Ker \sigma(h)=0$. Then the driving noise
     \begin{eqnarray*}
     &&\int_0^t\sigma(X(s-))dL_s\\
     &=&
     \int_0^t\int_{0<\|z\|_H\leq1}\sigma(X(s-))zd\tilde{N}(dz,ds)+\int_0^t\int_{\|z\|_H>1}\sigma(X(s-))zdN(dz,ds), t\geq0
     \end{eqnarray*}
     satisfies  Assumption \ref{ass3}.

     This can be seen as follows.
      For any $\hbar,y\in H$ with $\hbar\neq y$ and $\eta>0$, there exists $N\in\mathbb{N}$ such that
     \begin{eqnarray}\label{eq exmp 1}
     \|\hbar-P_{N}\hbar\|_H\leq \frac{\eta}{8}\text{ and }\|y-P_{N}y\|_H\leq \frac{\eta}{8}.
     \end{eqnarray}
     Here, $\{e_1,e_2,...,e_d\}$ is an orthonormal basis of $H$, and  for any $x\in H$, $P_{N}x=\sum_{i=1}^N\langle x,e_i\rangle_H e_i$.

     Let $x_N:=P_N(y-\hbar)$, $l=\sigma(\hbar)^{-1}x_N$ and $q=\hbar+\sigma(\hbar)l=\hbar-P_N\hbar+P_Ny$. Then $\|q-y\|_H\leq \frac{\eta}{4}$.
     By the continuity of $\sigma$ and $S_\nu=H$, it is easy to see that Assumption \ref{ass3} holds with $n=1$.
\end{exmp}

%
\vskip 0.2cm
\begin{exmp}
 Let $H=\mathbb{R}^d, d\in\mathbb{N}\cup\{+\infty\}$, and $\{e_i,i=1,2,...,d\}$ be an orthonormal basis of $H$.
Assume that $\beta_i\in \mathbb{R}\setminus\{0\},i\in\mathbb{N}$, $\{L_i=\{L_i(t),t\geq0\}, i\in\mathbb{N}\}$ be a sequence of i.i.d. one dimensional L\'evy processes with intensity measure $\mu$
on some filtered probability space $(\Omega,\mathcal{F},\{\mathcal{F}_t\}_{t\geq0},\mathbb{P})$, and  $L(t)=\sum_{i=1}^d \beta_iL_i(t)e_i, t\geq0$ defines an $H$-valued L\'evy process, that is,
$$
\sum_{i=1}^d \int_{\mathbb{R}}|\beta_ix_i|^2\wedge 1\mu(dx_i)<\infty.
$$
 Assume further that
\begin{itemize}
  \item[\textbf{(C1)}] there exists $c^+_n>0, c^-_n<0,n\in\mathbb{N}$ such that $\lim_{n\rightarrow\infty}c^-_n=\lim_{n\rightarrow\infty}c^+_n=0$ and $\{c^+_n,c^-_n,n\in\mathbb{N}\}\subseteq S_\mu$.

  \item[\textbf{(C2)}] there exist $\sigma_i: H\rightarrow\mathbb{R}, i\in\mathbb{N}$ such that
  $\sigma_i: H\rightarrow\mathbb{R}$ is continuous and for any $h\in H$,
          $\sigma_i(h)\neq 0.$

\end{itemize}

Denote by $N_i$ the Poisson random measure corresponding to $L_i$, and $\tilde{N}_i$ the associated compensated Poisson random measure. Suppose that the noise term in (\ref{meq1}) is of the form:
\begin{eqnarray*}
  &&\int_0^t\int_{Z_1^c}\si(X(s-), z)\tilde{N}(dz,ds) + \int_0^t\int_{Z_1} \si(X(s-), z)N(dz,ds)\\
  &=&
  \sum_{i=1}^d\int_0^t\beta_i\sigma_i(X(s-))dL_i(s)e_i\\
&=&
\sum_{i=1}^d\Big(
    \int_0^t\int_{0<|z_i|\leq 1}\beta_i\si_i(X(s-))z_i\tilde{N}_i(dz_i,ds)e_i + \int_0^t\int_{|z_i|> 1}\beta_i\si_i(X(s-)) z_iN_i(dz_i,ds)e_i
    \Big).
\end{eqnarray*}
 Then Assumption \ref{ass3} holds.

 This can be seen as follows. For any $h\in H$, set $S_h=\{\beta_i\si_i(h)c^+_ne_i,\beta_i\si_i(h)c^-_ne_i\}_{i,n\in\mathbb{N}}$, and notice that, when $X(s-)=h$, all of the possible jumps generated by the noise term contain $S_h$. Combining \textbf{(C1)} and \textbf{(C2)},
we can  obtain the above result.

\end{exmp}

\subsection{Locally monotone SPDEs}

Under the general  framework as in \cite{BWJ}, we will obtain the irreducibility for coercive and local monotone SPDEs driven by pure jump noise.

The assumptions (H1)-(H4) are very mild so that our result in this subsection ( Proposition 4.1 below)  is applicable to SPDEs such as stochastic reaction-diffusion equations, stochastic semilinear evolution equation, stochastic porous medium equation, stochastic $p$-Laplace equation, stochastic Burgers
type equations, stochastic 2D Navier-Stokes equation, stochastic magneto-hydrodynamic
equations, stochastic Boussinesq model for the B\'enard convection, stochastic 2D magnetic B\'enard problem,  stochastic 3D Leray-$\alpha$
model, stochastic equations of non-Newtonian fluids,  several stochastic Shell Models of
turbulence, and many other stochastic 2D Hydrodynamical systems.

Recall that we consider the following SPDEs.
\begin{eqnarray}\label{a-2}
  &&dX(t)= \cA (X(t)) dt +\int_{Z_1^c}\!\!\!\!\si(X(t-), z)\tilde{N}(dz,dt) + \int_{Z_1}\!\!\!\! \si(X(t-), z)N(dz,dt),\\
  &&X(0)=x.\nonumber
\end{eqnarray}
 Let us formulate the  assumptions on the coefficients $\mathcal{A}$ and $\sigma$. Suppose that there exist constants $\al > 1,\ \be \geq 0,\ \th > 0,\ C > 0$, $F>0$ and a measurable (bounded on balls)  function $\rho: V \rightarrow [0, +\infty)$ such that the following conditions hold for all $v,\ v_1,\ v_2 \in V$:

(H1)  (Hemicontinuity) The map $s \mapsto _{V^*}\langle \mathcal{A}(v_1 + sv_2), v \rangle_V $ is continuous on $\RR$,

(H2) (Local monotonicity)
\begin{eqnarray*}
&&2_{V^*}\langle \mathcal{A}(v_1)-\mathcal{A}(v_2), v_1-v_2\rangle_V +\int_{Z_1^c}\|\sigma(v_1,z) - \sigma(v_2,z)\|_H^2 \nu(dz) \\
&\leq& (C+\rho(v_2))\|v_1 - v_2\|_H^2,
\end{eqnarray*}

(H3) (Coercivity)
\[
2_{V^*}\langle \mathcal{A}(v), v \rangle_V + \th \|v\|_V^{\al} \leq F +C\|v\|_H^2,
\]

(H4) (Growth)
\[
\|\mathcal{A}(v)\|_{V^*}^{\frac{\al}{\al-1}} \leq (F + C\|v\|_V^{\al})\big(1 + \|v\|_H^{\be}\big).
\]
\vskip 0.2cm
The following well-posedness was proved in \cite[Theorem 1.2]{BWJ}.
\begin{lem}\label{lem monontone}
  Suppose that conditions (H1)-(H4) hold, and there exists a constant $\gamma<\frac{\theta}{2\beta}$
such that for all $v\in V$
\begin{eqnarray*}
\int_{Z_1^c}\|\sigma(v,z)\|_H^2\nu(dz)\leq F+C\|v\|_H^2+\gamma\|v\|_V^\alpha;
\end{eqnarray*}
\begin{eqnarray*}
\int_{Z_1^c}\|\sigma(v,z)\|_H^{\beta+2}\nu(dz)
\leq
F^{\frac{\beta+2}{2}}+C\|v\|_H^{\beta+2};
\end{eqnarray*}
\begin{eqnarray*}
\rho(v)\leq C(1+\|v\|_V^\alpha)(1+\|v\|_H^{\beta}).
\end{eqnarray*}
Then for any $x\in H$, (\ref{a-2}) has a unique solution $X^x=(X^x(t),t\geq0)$.
\end{lem}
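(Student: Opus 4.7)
The plan is to prove existence and uniqueness by a Galerkin scheme combined with the local monotonicity (Minty) method, adapted to pure jump noise. First I would fix a basis $\{e_i\}\subset V$ orthonormal in $H$, let $V_n := \mathrm{span}\{e_1,\dots,e_n\}$, and project every term of (\ref{a-2}) onto $V_n$; on $V_n$ all norms are equivalent, and (H1)--(H2) together with the growth conditions on $\si$ and $\cA$ yield, by standard theory for finite-dimensional SDEs driven by Poisson random measures, a unique strong solution $X^n$ of the projected equation. A priori bounds uniform in $n$ are obtained by applying It\^o's formula to $\|X^n(t)\|_H^2$: the drift gives $-\th\|X^n\|_V^{\al}$ plus lower order terms via coercivity (H3), while the compensated jump term produces $\int_{Z_1^c}\|\si(X^n,z)\|_H^2\nu(dz)$, which under the first noise hypothesis with $\ga<\th/(2\be)\leq\th$ is absorbed into the coercive gain. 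The $(\be+2)$-moment assumption on $\si$ controls the large-jump contributions via Burkholder--Davis--Gundy, (H4) converts the $V$-integrability into an $L^{\al/(\al-1)}$ bound on $\cA(X^n)$ in $V^*$, and Gronwall delivers
\begin{eqnarray*}
\sup_n \EE\Big[\sup_{t\leq T}\|X^n(t)\|_H^{\be+2}+\int_0^T\|X^n(s)\|_V^{\al}\,ds\Big]<\infty.
\end{eqnarray*}

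By reflexivity, along a subsequence $X^n\rightharpoonup X$ in $L^{\al}(\Om\times[0,T];V)$, $\cA(X^n)\rightharpoonup Y$ in $L^{\al/(\al-1)}(\Om\times[0,T];V^*)$, and the compensated jump integrals converge in $L^2$; the limit $X$ then satisfies the SPDE in $V^*$ with $Y$ in place of $\cA(X)$. The central step is the identification $Y=\cA(X)$, and it is the main obstacle: with only local monotonicity the standard Minty argument requires an exponential weight. For a predictable $V$-valued test process $\phi$ with $\EE\int_0^T\rho(\phi(s))\,ds<\infty$ (an abundant class, since the polynomial bound $\rho(v)\leq C(1+\|v\|_V^{\al})(1+\|v\|_H^{\be})$ combined with the a priori estimates guarantees integrability of $\rho$ along reasonable $\phi$), applying It\^o to $e^{-\int_0^t(C+\rho(\phi(s)))ds}\|X^n(t)-\phi(t)\|_H^2$ and invoking (H2) produces a nonnegative expression whose weak limit can be compared against the analogous expression for the limit $X$. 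Running $\phi=X-\la v$ for $\la\downarrow 0$ and $v$ over a dense subset of predictable $V$-valued processes then yields $Y=\cA(X)$ $dt\otimes\PP$-almost everywhere, by the classical Minty trick. The large-jump terms in (\ref{a-2}) are incorporated afterwards by iterating the construction over the finitely many jump times of $N(Z_1,\cdot)$ on $[0,T]$, which is admissible since $\nu(Z_1)<\infty$.

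For uniqueness, given two solutions $X,\widetilde X$ with identical initial data, the $H$-valued It\^o formula combined with (H2) and the variance bound on $\si$ gives
\begin{eqnarray*}
\|X(t)-\widetilde X(t)\|_H^2 \leq \int_0^t\big(C+\rho(\widetilde X(s))\big)\|X(s)-\widetilde X(s)\|_H^2\,ds + M(t),
\end{eqnarray*}
with $M$ a local martingale vanishing at $0$. Since $\rho(\widetilde X)$ is only path-locally integrable, I would introduce $\tau_R := \inf\{t:\int_0^t\rho(\widetilde X(s))ds>R\}$; the a priori estimates together with the polynomial growth of $\rho$ imply $\tau_R\uparrow\infty$ almost surely, so Gronwall on $[0,\tau_R]$ yields $X=\widetilde X$ up to $\tau_R$ and hence on $[0,T]$. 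The hardest step throughout is the local Minty identification in the middle paragraph: it is precisely this step that forces the particular form of the noise hypotheses (notably the $(\be+2)$-moment) and the growth restriction on $\rho$, since these are what guarantee enough integrability along the approximating sequence to validate the exponential-weight trick in the absence of global monotonicity.
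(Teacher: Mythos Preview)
Your sketch is essentially correct and follows the standard Galerkin/local-Minty approach that is used in the reference the paper cites; note that the paper itself does not prove this lemma but simply invokes \cite[Theorem 1.2]{BWJ}, so there is no independent argument in the paper to compare against. One minor slip: the chain $\gamma<\theta/(2\beta)\leq\theta$ is not generally valid (since $\beta\geq 0$ may be less than $1/2$), and the factor $1/(2\beta)$ actually enters through the $(\beta+2)$-moment estimate rather than the $L^2$ energy inequality, but this does not affect the overall strategy you outline.
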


\vskip 0.2cm
Now, we state the main result in this  subsection.
\begin{prop}\label{Prop monotone}
  Under the same assumptions of Lemma \ref{lem monontone}, assume that for any fixed $z\in Z_1$, $\sigma(\cdot,z): H\rightarrow H$ is continuous, and that the driving  noise term satisfies Assumption \ref{ass3}, then the solution $\{X^x,x\in H\}$ to equation (\ref{a-2}) is irreducible in $H$.
\end{prop}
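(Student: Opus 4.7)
The plan is to invoke Theorem \ref{thmmulti}, so the task reduces to verifying Assumptions \ref{ass1}, \ref{ass2}, and \ref{ass3} for the solution to (\ref{a-2}). Assumption \ref{ass3} is an explicit hypothesis of the proposition. Assumption \ref{ass1}, i.e.\ global well-posedness and the strong Markov property, is supplied by Lemma \ref{lem monontone} together with the standard fact that a well-posed time-homogeneous SPDE driven by a Poisson random measure with state-dependent coefficients defines a strong Markov process; the assumed continuity of $\sigma(\cdot, z)$ for $z \in Z_1$ ensures that the post-jump state depends continuously on the pre-jump state, which is the natural regularity at the big-jump times. The real content of the proof is the weak-continuity-in-initial-data Assumption \ref{ass2}.

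To verify Assumption \ref{ass2}, fix $h \in H$ and $\eta > 0$ small. The key reduction is that $X^{\tilde{h}}$ coincides with the auxiliary process $X_1^{\tilde{h}}$ from (\ref{meq1-Zhai}) (with $m=1$, so that the compensator correction vanishes) on the interval $[0, \tau_1^1)$, where $\tau_1^1$ is the first arrival time of the big-jump Poisson process $N(Z_1, \cdot)$. Proposition \ref{Prop 3} with $m=1$ tells us that $X_1^{\tilde{h}}$ is independent of $\tau_1^1$, and $\mathbb{P}(\tau_1^1 > t) = e^{-\nu(Z_1)t} > 0$. Consequently
\[
\mathbb{P}(\tau_{\tilde{h}, h}^\eta \geq t) \;\geq\; \mathbb{P}\Bigl(\sup_{s \in [0,t]}\|X_1^{\tilde{h}}(s) - h\|_H \leq \eta\Bigr)\, e^{-\nu(Z_1)t},
\]
and it suffices to bound the first factor below uniformly for $\tilde{h}$ in a small $H$-ball around $h$.

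For that bound, I would apply It\^o's formula to $\|X_1^{\tilde{h}}(t) - X_1^h(t)\|_H^2$ and exploit local monotonicity (H2) with $v_2 = X_1^h(s)$ to get
\[
\|X_1^{\tilde{h}}(t) - X_1^h(t)\|_H^2 \leq \|\tilde{h} - h\|_H^2 + \int_0^t (C + \rho(X_1^h(s)))\|X_1^{\tilde{h}}(s) - X_1^h(s)\|_H^2\,ds + M(t),
\]
where $M$ is a pure-jump local martingale to be controlled via Burkholder-Davis-Gundy. Introducing the localizing stopping time $\zeta_R := \inf\{t \geq 0 : \int_0^t \rho(X_1^h(s))\,ds \geq R\}$, which satisfies $\zeta_R \uparrow \infty$ a.s.\ (because (H3) and (H4) together with the growth condition on $\rho$ and the a priori $L^\alpha_{\mathrm{loc}}([0,\infty), V) \cap L^\infty_{\mathrm{loc}}([0,\infty), H)$ regularity of $X_1^h$ force $\int_0^T \rho(X_1^h(s))\,ds < \infty$ a.s.), a stochastic Gronwall argument yields
\[
\mathbb{E}\sup_{s \in [0, t \wedge \zeta_R]}\|X_1^{\tilde{h}}(s) - X_1^h(s)\|_H^2 \,\leq\, C(R,t)\,\|\tilde{h} - h\|_H^2.
\]

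The closing step is then routine. First choose $t > 0$ small and $R$ large so that $\mathbb{P}(\zeta_R > t) \geq 3/4$ and, using the c\`adl\`ag property of $X_1^h$ at $t=0$, $\mathbb{P}(\sup_{s \in [0,t]}\|X_1^h(s) - h\|_H \leq \eta/4) \geq 3/4$; then pick $\epsilon \in (0, \eta/2]$ small enough that Markov's inequality yields $\mathbb{P}(\sup_{s \in [0, t \wedge \zeta_R]}\|X_1^{\tilde{h}}(s) - X_1^h(s)\|_H > \eta/4) \leq 1/4$ whenever $\tilde{h} \in B(h, \epsilon)$. Intersecting these three events produces $\mathbb{P}(\sup_{s \in [0,t]}\|X_1^{\tilde{h}}(s) - h\|_H \leq \eta/2) \geq 1/4$, which combined with the $e^{-\nu(Z_1)t}$ factor verifies Assumption \ref{ass2} and, via Theorem \ref{thmmulti}, yields irreducibility. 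The main obstacle is making the stochastic Gronwall step precise: the coefficient $\rho(X_1^h(\cdot))$ is not bounded on bounded $H$-sets (only on bounded $V$-sets), so its $L^1_t$-integrability has to be read off from the coercivity/energy bounds satisfied by $X_1^h$, and the BDG estimate for $M$ then requires a moment bound on $\sigma(X_1^{\tilde{h}}, z) - \sigma(X_1^h, z)$ consistent with (H2).
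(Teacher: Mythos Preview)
Your overall architecture matches the paper's proof exactly: reduce to the small-jump process $X_1^{\tilde h}$, use independence of $X_1$ and $\tau_1^1$ via Proposition~\ref{Prop 3}, and verify Assumption~\ref{ass2} by an It\^o/local-monotonicity estimate on $\|X_1^{\tilde h}-X_1^h\|_H^2$. The gap is precisely the one you flag at the end. Your displayed $L^2$-type bound $\mathbb{E}\sup_{s\le t\wedge\zeta_R}\|X_1^{\tilde h}(s)-X_1^h(s)\|_H^2\le C(R,t)\|\tilde h-h\|_H^2$ would require BDG applied to the jump martingale $M$, and the quadratic variation of its second piece $\int_0^{\cdot}\!\int_{Z_1^c}\|\sigma(X_1^{\tilde h},z)-\sigma(X_1^h,z)\|_H^2\,\tilde N(dz,ds)$ involves $\int_{Z_1^c}\|\sigma(v_1,z)-\sigma(v_2,z)\|_H^4\,\nu(dz)$. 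Hypothesis (H2) only controls the second moment $\int\|\sigma-\sigma\|_H^2\,\nu(dz)$, so under the stated assumptions the BDG route does not close.

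The paper avoids BDG altogether. It applies It\^o's formula to the \emph{exponentially weighted} quantity $e^{-\int_0^t(C+\rho(X_1^h(s)))ds}\|X_1^{\tilde h}(t)-X_1^h(t)\|_H^2$, so that the entire drift is nonpositive by (H2) and one is left with a nonnegative process dominated by $\|\tilde h-h\|_H^2$ plus a local martingale. Then the stochastic Gronwall inequality of \cite[Lemma~3.7]{XZ} (a Lenglart-type result) yields, for any $0<q<p<1$,
\[
\mathbb{E}\Big[\Big(\sup_{0\le t\le T}e^{-\int_0^t(C+\rho(X_1^h(s)))ds}\|X_1^{\tilde h}(t)-X_1^h(t)\|_H^2\Big)^q\Big]\le \tfrac{p}{p-q}\,\|\tilde h-h\|_H^{2q},
\]
with no moment hypothesis on $\sigma$ beyond (H2). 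The stopping time $\tau_1^h:=\inf\{t:e^{-\int_0^t(C+\rho)}\le 1/2\}$ (your $\zeta_R$ in disguise) converts this into a bound on $\mathbb{P}(\sup_{t\le T}\|X_1^{\tilde h}-X_1^h\|_H>\eta/2)$ via Chebyshev, and the rest of your closing step goes through unchanged. In short: replace the BDG-based $L^2$ estimate by the fractional-moment stochastic Gronwall lemma, and your proof becomes the paper's.
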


\begin{proof}
By Lemma \ref{lem monontone} and the fact that for any fixed $z\in Z_1$, $\sigma(\cdot,z): H\rightarrow H$ is continuous, it is classical that $\{X^x,x\in H\}$ forms a strong Markov process on $H$. Therefore, Assumption \ref{ass1} is satisfied.

  Applying Theorem \ref{thmmulti}, we see that the proof of this proposition will be  complete once we prove that  Assumption \ref{ass2} is satisfied, which we will do in the rest of the proof.

The proof is divided into two steps.

\textbf{{Step 1}}. Consider (\ref{meq1-Zhai}) with $m=1$, that is
  \begin{eqnarray}\label{eq mono 1}
  &&dX_1(t)= \cA (X_1(t)) dt +\int_{Z_1^c}\!\!\!\!\si(X_1(t-), z)\tilde{N}(dz,dt), \nonumber\\
  &&X_1(0)=x.
\end{eqnarray}
 By \cite[Theorem 1.2]{BWJ}, for any $x\in H$, (\ref{eq mono 1}) has a unique solution $X^x_1=(X^x_1(t),t\geq0)$.

For any $h , \tilde{h} \in H$, applying the It\^{o} formula, we have
\begin{eqnarray*}
&&e^{-\int_0^t (C + \rho(X_1^h(s)))ds}\| X_1^{\tilde{h}}(t) - X_1^h(t)\|_H^2 - \|\tilde{h}-h\|_H^2 \\
&\leq&
   \int_0^t e^{-\int_0^s (C + \rho(X_1^h(r)))dr}\Big(2_{V^*}\langle \mathcal{A}(X_1^{\tilde{h}}(s))-\mathcal{A}(X_1^h(s)), X_1^{\tilde{h}}(s)-X_1^h(s)\rangle_V \\
&&-
  (C+\rho(X_1^h(s)))\| X_1^{\tilde{h}}(s) - X_1^h(s)\|_H^2 \Big)ds \\
&&+
   2\int_0^t\!\! \int_{Z_1^c} e^{-\int_0^s\! (C + \rho(X_1^h(r)))dr} \langle \sigma(X_1^{\tilde{h}}(s-),z) \!-\! \sigma(X_1^h(s-),z), X_1^{\tilde{h}}(s-) \!-\! X_1^h(s-) \rangle_H \tilde{N}(dz,ds)\\
&&+
    \int_0^t\!\! \int_{Z_1^c} e^{-\int_0^s (C + \rho(X_1^h(r)))dr} \|\sigma(X_1^{\tilde{h}}(s-),z) - \sigma(X_1^h(s-),z)\|_H^2 N(dz,ds)\\
&\leq&
    2\int_0^t\!\! \int_{Z_1^c} e^{-\int_0^s (C + \rho(X_1^h(r)))dr} \langle \sigma(X_1^{\tilde{h}}(s-),z) - \sigma(X_1^h(s-),z), X_1^{\tilde{h}}(s-) - X_1^h(s-) \rangle_H \tilde{N}(dz,ds)\\
&&+
   \int_0^t\!\! \int_{Z_1^c} e^{-\int_0^s (C + \rho(X_1^h(r)))dr} \|\sigma(X_1^{\tilde{h}}(s-),z) - \sigma(X_1^h(s-),z)\|_H^2 \tilde{N}(dz,ds).
\end{eqnarray*}
Assumption (H2) has been used for the last inequality. Applying stochastic Gronwall's inequality, see \cite[Lemma 3.7]{XZ}, we deduce that for any $0<q<p<1$ and any $T>0$,
\begin{eqnarray}\label{eq mono pro}
\EE\Big[\Big(\sup_{0\leq t \leq T}e^{-\int_0^{t} (C + \rho(X_1^h(s)))ds}\| X_1^{\tilde{h}}(t) - X_1^h(t)\|_H^2\Big)^q\Big]
\leq
(\frac{p}{p-q})\|\tilde{h}-h\|_H^{2q}.
\end{eqnarray}
Define the stopping time
$$\tau_1^h := \inf \{t \geq 0: e^{-\int_0^{t} (C + \rho(X_1^h(s)))ds} \leq 1/2\}.$$
 Since $X^h_1\in L^\alpha_{loc}([0,\infty),V)\cap D([0,\infty),H), \mathbb{P}$-a.s. (see \cite[Definition 1.1]{XZ}), $\PP(\tau_1^h >0) = 1$. Therefore,
\begin{eqnarray}\label{eq mono stop}
\lim_{T\searrow0}\PP(\tau_1^h <T) = 0.
\end{eqnarray}
By Chebyshev's inequality, for any $\eta>0$,
\ba
&&\PP\Big(\sup_{0\leq t \leq T}\| X_1^{\tilde{h}}(t) -X_1^h(t)\|_H > \frac{\et}{2}\Big) \notag \\
&\leq&\PP\Big(\sup_{0\leq t \leq T}\| X_1^{\tilde{h}}(t) -X_1^h(t)\|_H > \frac{\et}{2},  \tau_1^h \geq T\Big) + \PP(\tau_1^h < T) \notag \\
&\leq&\PP\Big(\sup_{0\leq t \leq T}e^{-\int_0^{t} (C + \rho(X_1^h(s)))ds}\| X_1^{\tilde{h}}(t) -X_1^h(t)\|_H^2 > \frac{\et^2}{8},  \tau_1^h \geq T\Big) + \PP(\tau_1^h < T)  \notag \\
&\leq& (\frac{p}{p-q})\|\tilde{h}-h\|_H^{2q}\frac{(8)^q}{(\eta^2)^q} + \PP(\tau_1^h < T).
\ea

Notice that
\begin{eqnarray}\label{eq mon 2}
&&\PP(\sup_{0\leq t \leq T}\|X_1^{\tilde{h}}(t)-h\|_H > \et)\nonumber\\
&\leq&
\PP(\sup_{0\leq t \leq T}\|X_1^{h}(t)-h\|_H > \frac{\et}{2})
+
\PP(\sup_{0\leq t \leq T}\|X_1^{\tilde{h}}(t) -X_1^h(t)\|_H > \frac{\et}{2}),
\end{eqnarray}
and $X^h_1\in D([0,\infty),H), \mathbb{P}$-a.s. implies that
\begin{eqnarray}\label{eq mon 2-zhai00}
\lim_{T\searrow 0}\PP(\sup_{0\leq t \leq T}\|X_1^{h}(t)-h\|_H > \frac{\et}{2})=0.
\end{eqnarray}
Combining (\ref{eq mono stop})-(\ref{eq mon 2-zhai00}) together we see that  there exist $T_0=T_0(h,\eta)>0$ and $\epsilon_0=\epsilon_0(h,\eta)>0$ small enough such that
\begin{eqnarray}\label{eq mon 2-1}
\sup_{\tilde{h}\in B(h,\epsilon_0)}\PP(\sup_{0\leq t \leq T_0}\|X_1^{\tilde{h}}(t)-h\|_H > \et)
\leq
\frac{1}{4}.
\end{eqnarray}
Therefore,
\begin{eqnarray}\label{eq mon 3}
\inf_{\tilde{h}\in B(h,\epsilon_0)}\PP(\sup_{0\leq t \leq T_0}\|X_1^{\tilde{h}}(t)-h\|_H \leq \et)
\geq
\frac{3}{4}.
\end{eqnarray}

\textbf{{Step 2}}.  Recall $\tau_1^1=\inf\{t\geq 0: N(Z_1,t)=1\}$ as  defined in (\ref{stop time 1}). Notice that
$\{X^x(t),t\in[0,\tau_1^1)\}$ coincides with $\{X_1^x(t),t\in[0,\tau_1^1)\}$.
By the independence of $X^x_1$ and $\tau_1^1$ (see Proposition \ref{Prop 3}), we have
\begin{eqnarray}\label{eq mon 4}
&&\inf_{\tilde{h}\in B(h,\epsilon_0)}\PP(\sup_{0\leq t \leq T_0}\|X^{\tilde{h}}(t)-h\|_H \leq \et)\nonumber\\
&\geq&
\inf_{\tilde{h}\in B(h,\epsilon_0)}\PP(\sup_{0\leq t \leq T_0}\|X^{\tilde{h}}_1(t)-h\|_H \leq \et,\tau_1^1>2T_0)\nonumber\\
&\geq&
\inf_{\tilde{h}\in B(h,\epsilon_0)}\PP(\sup_{0\leq t \leq T_0}\|X^{\tilde{h}}_1(t)-h\|_H \leq \et)\PP(\tau_1^1>2T_0)\nonumber\\
&\geq&\frac{3}{4}\PP(\tau_1^1>2T_0)>0.
\end{eqnarray}
For the last inequality, we have used the fact that $\tau_1^1$  has the exponential distribution with parameter $\nu(Z_1)<\infty$.
(\ref{eq mon 4}) implies  Assumption \ref{ass2}, completing the proof.
\end{proof}

\subsection{Nonlinear Schr\"odinger equations}

The nonlinear Schr\"odinger equation (NLS) is a fundamental model describing wave propagation that appears in various fields such as nonlinear optics, nonlinear water propagation, quantum physics, Bose-Einstein condensate, plasma physics and molecular biology.

In this subsection, we study the irreducibility of stochastic NLS driven by additive L\'evy noise. Without further notice, all the $L^p$ spaces in this subsection are referred as spaces of complex-values functions.

Consider (\ref{meq1-additive case}) with  $H = L^2(\RR^d)$, $d\in\mathbb{N}$, $\cA(u) = i[\Delta u- \lambda |u|^{\al -1}u]$, where $\lambda \in \{-1, \textcolor[rgb]{0.00,0.07,1.00}{0,} 1\}$, and $1 < \al< 1+\frac{4}{d}$. Now consider NLS with additive noise, that is,
\begin{eqnarray}\label{a-3}
  &&dX(t)= \cA (X(t)) dt +\int_{0<\|z\|_H\leq 1}\!\!\!\! z\tilde{N}(dz,dt) + \int_{\|z\|_H>1}\!\!\!\! zN(dz,dt),\\
  &&X(0)=x.\nonumber
\end{eqnarray}

We say a pair $(p, r)$ is admissible if $p, r \in [2,\infty]$ and $(p,r,d) \ne (2,\infty ,2)$ satisfying $\frac{2}{p}+\frac{d}{r}=\frac{d}{2}$.
The following result provides the existence and uniqueness of the solution of the stochastic NLS (\ref{a-3}) whose proof was given in \cite{WZ}.
\vskip 0.2cm
\begin{prop}\label{prop 4}
  Let $p\geq 2$, $1<\al<1+\frac{4}{d}$, $r=\al +1$  such that  $(p,r)$ is an admissible pair. For any $\hbar\in  H$,
there exists a unique global mild solution  $X^\hbar=(X^\hbar(t),t\geq 0)$ of (\ref{a-3}) satisfying
\[
X^\hbar\in D([0,\infty); H)\cap L^p_{loc}(0,\infty;L^r(\RR^d)),\ \mathbb{P}\text{-a.s.}
\]
\end{prop}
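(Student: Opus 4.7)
The plan is to pass to the mild formulation using the free Schr\"odinger group $S(t)=e^{it\Delta}$, which is unitary on $H=L^2(\RR^d)$, and to reduce the stochastic problem to a pathwise deterministic one by subtracting the stochastic convolution. Concretely, set
\[
Z(t)=\int_0^t\!\!\int_{0<\|z\|_H\le 1}\!\!S(t-s)z\,\tilde N(dz,ds)+\int_0^t\!\!\int_{\|z\|_H>1}\!\!S(t-s)z\,N(dz,ds),
\]
and write $X^\hbar(t)=S(t)\hbar+Y(t)+Z(t)$, where $Y$ must solve the \emph{pathwise} equation
\[
Y(t)=-i\lambda\int_0^t S(t-s)\bigl|(S\hbar)(s)+Y(s)+Z(s)\bigr|^{\alpha-1}\bigl((S\hbar)(s)+Y(s)+Z(s)\bigr)\,ds.
\]

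First I would establish the regularity of $Z$. Since $S(t)$ is an $L^2$-isometry, the It\^o isometry for the compensated integral gives $\EE\|Z(t)\|_H^2\le t\int_{\|z\|_H\le 1}\|z\|_H^2\nu(dz)<\infty$, and the large-jump part is a finite sum of $H$-valued jumps on compact time intervals; a standard maximal inequality then yields $Z\in D([0,\infty);H)$ a.s. For the Strichartz regularity $Z\in L^p_{\mathrm{loc}}(0,\infty;L^r(\RR^d))$ I would combine the dual (inhomogeneous) Strichartz estimate for $S(t)$ applied pathwise to the jump integrand with a stochastic Fubini argument for the compensated part, exploiting that the admissibility relation $\tfrac{2}{p}+\tfrac{d}{r}=\tfrac{d}{2}$ is precisely what makes Strichartz applicable.

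Next, I would prove local well-posedness of the equation for $Y$ via a Banach fixed point in the Strichartz space $\cX_T:=C([0,T];H)\cap L^p(0,T;L^r(\RR^d))$. The key nonlinear estimate uses $r=\alpha+1$, the dual exponents $r'=r/(r-1)=(\alpha+1)/\alpha$, and H\"older in time, giving
\[
\bigl\||u|^{\alpha-1}u\bigr\|_{L^{p'}(0,T;L^{r'})}\lesssim T^{\theta}\,\|u\|_{L^p(0,T;L^r)}^{\alpha}
\]
for some $\theta>0$ exactly when $\alpha<1+4/d$ (the mass-subcritical regime). Combined with the Strichartz and dual Strichartz estimates for $S(t)$, this makes the natural map a contraction on a small ball in $\cX_T$ for $T$ sufficiently small, depending only on $\|\hbar\|_H$ and the realized $Z$.

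Finally I would globalize by $L^2$ mass control. Applying It\^o's formula to $\|X^\hbar(t)\|_H^2$ (the Hamiltonian part $i[\Delta u-\lambda|u|^{\alpha-1}u]$ is skew-adjoint on suitable test functions, so it contributes nothing to the $L^2$ norm), the only contribution comes from the L\'evy noise and is pathwise bounded on finite intervals, producing an a priori bound $\sup_{t\le T}\|X^\hbar(t)\|_H<\infty$ a.s., which rules out blow-up and iterates the local solution to all times. The main obstacle I foresee is the Strichartz-type control of the pure-jump stochastic convolution $Z$ in $L^p_{\mathrm{loc}}(L^r)$: unlike in the Wiener case, one cannot invoke an $H$-valued Burkholder inequality directly in mixed norm spaces, and one must either develop a jump-adapted Strichartz estimate by decomposing into small and large jumps and treating the small-jump part via a $p$-th moment estimate ($p\ge 2$) together with stochastic Fubini, or alternatively truncate the jump measure and pass to the limit using the $\sigma$-finiteness of $\nu$.
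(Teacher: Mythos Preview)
The paper does not supply its own proof of this proposition; it simply attributes the result to \cite{WZ} (Wang--Zhai--Zhu). Your outline follows the standard route for stochastic NLS and is consistent with the machinery of \cite{BLZ} that the paper itself invokes in the very next proof: mild formulation via the Schr\"odinger group, stochastic Strichartz control of the convolution $Z$, local contraction in $C([0,T];H)\cap L^p(0,T;L^r)$ using the mass-subcritical gain $\theta>0$, and globalization by an $L^2$ a priori bound. So at the level of strategy you are aligned with the cited reference.

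Two places where your sketch would need sharpening before it counts as a proof. First, the Strichartz regularity of $Z$: a pathwise dual-Strichartz plus stochastic Fubini argument is not enough on its own, because the compensated integral is not of the form $\int_0^t S(t-s)f(s)\,ds$ with $f\in L^{p'}(0,T;L^{r'})$ pathwise. One really needs a \emph{stochastic} Strichartz estimate for Poisson integrals (this is the content of \cite{BLZ}), obtained by combining the dispersive $L^{r'}\to L^r$ decay of $S(t)$ with a Burkholder-type inequality adapted to the jump setting; the large-jump part is handled separately as a finite sum. You flag this as the obstacle, which is correct, but the resolution is not ``stochastic Fubini'' so much as a genuinely new maximal inequality. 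Second, for globalization you apply It\^o's formula to $\|X^\hbar(t)\|_H^2$ and discard the nonlinear term by skew-symmetry; this is morally right but requires justification at the available regularity (mild solutions in $L^2$ only), typically via an approximation by truncated equations as in (\ref{a-5}) and passage to the limit. Both points are addressed in \cite{BLZ,WZ}, so your plan is sound provided you lean on those results rather than reprove them.
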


\vskip 0.2cm
Here is the result for the irreducibility of the solution.
\begin{prop}\label{Pro 4.3}

If the driving  noise satisfies Assumption \ref{assv}, then the solution $\{X^x,x\in H\}$ of (\ref{a-3}) is irreducible in $H$.
Moreover, for the linear Schr\"odinger equation case, i.e., $\lambda=0$, there exists at most one invariant measure.

\end{prop}

\begin{proof}
For the case of $\lambda=0$, we have
$$
\|X^{\hbar_1}(t)-X^{\hbar_2}(t)\|_H\leq \|\hbar_1-\hbar_2\|_H, \ \forall t\geq0,\ \hbar_1,\hbar_2\in H.
$$
This together with the irreducibility implies  the uniqueness of the invariant measure (if it exists); see  Theorem 2 in \cite{Kapica}.
In the following, we will prove irreducibility.

It is classical that the solution $\{X^\hbar,\hbar\in H\}$ of (\ref{a-3}) forms a strong Markov process on $H$.
  Therefore, Assumptions \ref{ass1} holds.
To apply Theorem \ref{thmmulti-additive case} to conclude the proof, it only remains to show that  Assumption \ref{ass2} holds.
From the proof of Proposition \ref{Prop monotone} we see that  Assumption \ref{ass2} is implied by   the following results.

 For any $h \in  H$ and $\et > 0$, there exist $T_0=T_0(h,\eta)>0$ and $\epsilon_0=\epsilon_0(h,\eta)>0$ small enough such that
\begin{equation}\label{NLS-0}
\sup_{\tilde{h} \in B(h, \eps_0)}\PP\Big(\sup_{0\leq t \leq T_0}\| X_1^h(t) - X_1^{\tilde{h}}(t)\|_{ H}> \frac{\et}{2}\Big) < \frac{1}{2},
\end{equation}
where $X^\hbar_1$ is the mild solution of the following equation:
\begin{eqnarray}\label{a-4}
  &&dX^\hbar_1(t)= \cA (X^\hbar_1(t)) dt +\int_{0<\|z\|_H\leq 1}\!\!\!\! z\tilde{N}(dz,dt),\\
  &&X^\hbar_1(0)=\hbar.\nonumber
\end{eqnarray}
 We now prove (\ref{NLS-0}). For $t>0$,  set
\begin{align}
Y_{t}:=L^{\infty}(0,t; H)\cap L^p(0,t;L^r(\RR^d)),
\end{align}
and for $u\in Y_t$,
  \begin{align}
   \|u\|_{Y_{t}}:=\sup_{s\in[0,t]}\|u(s)\|_{ H}+\Big(\int_0^t\|u(s)\|_{L^r(\RR^d)}^p\d s\Big)^{\frac1p}.
  \end{align}
Let $\theta:\mathbb{R}_+\rightarrow[0,1]$ be a non-increasing $C_0^{\infty}$ function such that
  $1_{[0,1]}\leq \theta \leq 1_{[0,2]}$ and $\inf_{x\in\RR_+}\theta'(x)\geq-2$. For the fixed $h\in  H$, set $R=\|h\|_{ H}+2$ and $\theta_R(\cdot)=\theta(\frac{\cdot}{R})$.

For any $\hbar\in  H$, let $Z^\hbar_R$ be the solution of the trancated stochastic Schr\"odinger equation:
\begin{eqnarray}\label{a-5}
  &&dZ^\hbar_R(t)= i[\Delta Z^\hbar_R(t)- \lambda \theta_R(\|Z^\hbar_R\|_{Y_t})|Z^\hbar_R(t)|^{\al-1}Z^\hbar_R(t)] dt +\int_{0<\|z\|_H\leq 1}\!\!\!\! z\tilde{N}(dz,dt),\\
  &&Z^\hbar_R(0)=\hbar.\nonumber
\end{eqnarray}
By \cite[Propositions 2.2 and 3.1]{BLZ}, for any $T>0$, we have
\ba
\|Z_R^h - Z_R^{\tilde{h}}\|_{Y_{T}}
\leq
C\|h-\tilde{h}\|_{ H} +CT^{1-\frac{(\al-1)d}{4}}R^{\al-1}\|Z_R^h - Z_R^{\tilde{h}}\|_{Y_{T}}.
\ea
For any $\eta>0$, we can choose $\widetilde{\epsilon}\in(0,\frac{\eta}{4}\wedge 1]$ and $\widetilde{T}>0$  small enough such that
\[2C\widetilde{\epsilon}\leq \frac{1}{2}\wedge \frac{\eta}{4}
   \text{ and }
   \quad C\widetilde{T}^{1-\frac{(\al-1)d}{4}}R^{\al-1} \leq \frac{1}{2}.\]
 Then, for any $\tilde{h}\in B(h,\widetilde{\epsilon})$,
\begin{equation}\label{NLS-1}
\|Z_R^h - Z_R^{\tilde{h}}\|_{Y_{\widetilde{T}}}  \leq 2C\|h-\tilde{h}\|_{ H}\leq \frac{1}{2}\wedge \frac{\eta}{4},
\end{equation}
here, for any $\epsilon>0$, $B(h,\epsilon)=\{\hbar\in  H: \|\hbar-h\|_{ H}<\epsilon\}$.

Define $\tau=\inf\{s \geq 0: \| Z^h_R \|_{_{Y_s}} > \|h\|_{ H}+1/2\}$. Then $\PP(\tau>0)=1$, which implies that
there exists $T_0\in (0, \widetilde{T}]$ such that
\begin{eqnarray}\label{eq XDE ST 00}
\PP(\tau>T_0)\geq 11/12.
\end{eqnarray}
Combining this with (\ref{NLS-1}), for any $\tilde{h}\in B(h,\widetilde{\epsilon})$,
\begin{eqnarray}\label{eq XDE 00}
\|Z_R^{\tilde{h}}\|_{Y_{T_0}}  \leq \|h\|_{ H}+1<R \text{ on }\ \{\tau>T_0\}\ \PP\text{-a.s..}
\end{eqnarray}

Let us define   $\tau^\hbar_R = \inf\{s \geq 0: \| Z^\hbar_R \|_{_{Y_s}} > R\}$. Then, for any $\tilde{h}\in B(h,\widetilde{\epsilon})$,
\begin{eqnarray}\label{eq XDE 01}
\PP(\tau^{\tilde{h}}_R>0)=1 \text{ and }Z^{\tilde{h}}_R(t)=X^{\tilde{h}}_1(t)\text{ on }t\in[0,\tau^{\tilde{h}}_R)\ \PP\text{-a.s.}.
\end{eqnarray}

Note that (\ref{eq XDE 00}) implies that, for any $\tilde{h}\in B(h,\widetilde{\epsilon})$,
\begin{eqnarray}\label{eq XDE 02}
\PP(\tau\wedge\tau^{\tilde{h}}_R>T_0)=\PP(\tau>T_0).
\end{eqnarray}

Combining (\ref{NLS-1})--(\ref{eq XDE 02}) together, we deduce that, for any $\tilde{h}\in B(h,\widetilde{\epsilon})$,
\ba
&&\PP\Big(\sup_{0\leq t \leq T_0}\| X_1^h(t) - X_1^{\tilde{h}}(t)\|_{ H}> \frac{\et}{2}\Big) \notag\\
&\leq&\PP\Big(\sup_{0\leq t \leq T_0}\| Z_R^h(t) - Z_R^{\tilde{h}}(t)\|_{ H}> \frac{\et}{2},  \tau\wedge\tau^{\tilde{h}}_R > T_0\Big) + \PP(\tau\wedge\tau^{\tilde{h}}_R\leq T_0) \notag \\
&=&  \PP(\tau \leq T_0) \leq \frac{1}{12}.
\ea

 This completes the proof.
\end{proof}

\subsection{Singular SDEs}\label{sub section 4.5}
      Let $L=(L_t)_{t\geq0}$ be a L\'evy process on $\mathbb{R}^d, d\in\mathbb{N}$, and denote its intensity measure by $\nu$.
      To state the condition on $\nu$, for $\alpha\in(0,2)$, denote by $\mathbb{L}_{non}^\alpha$ the space of all non-degenerate $\alpha$-stable L\'evy measure $\nu^{(\alpha)}$; that is,
      $$
      \nu^{(\alpha)}(A)=\int_0^\infty(\int_{\mathbb{S}^{d-1}}\frac{1_{A}(r\theta)\vartheta(d\theta)}{r^{1+\alpha}})dr,\ \ A\in\mathcal{B}(\mathbb{R}^d),
      $$
      where $\vartheta$ is a finite measure over the unit sphere $\mathbb{S}^{d-1}$ in $\mathbb{R}^d$ with
      \begin{eqnarray}\label{eq 1.4}
      \inf_{\theta_0\in \mathbb{S}^{d-1}}\int_{\mathbb{S}^{d-1}}|\theta_0\cdot\theta|\vartheta(d\theta)>0.
      \end{eqnarray}

For $R>0$, denote by $B_R$ the closed ball in $\mathbb{R}^d$ centered at the origin with radius
$R$. We assume that there are $\nu_1,\nu_2\in \mathbb{L}_{non}^\alpha$, so that
\begin{eqnarray}\label{eq ZXC IM}
\nu_1(A)\leq\nu(A)\leq\nu_2(A)\ \text{for}\ A\in\mathcal{B}(B_1).
\end{eqnarray}
In \cite{CZZ}, the authors call L\'evy processes with intensity measure satisfying (\ref{eq ZXC IM})
non-degenerate $\al$-stable-like L\'evy process. The L\'evy measure $\nu$ could be singular with respect to
the Lebesgue measure on $\mathbb{R}^d$ and its support could be a proper subset of $\mathbb{R}^d$.

For a Borel measurable drift  $b(\cdot):\mathbb{R}^d\rightarrow\mathbb{R}^d$ and diffusion matrix $\sigma(\cdot):\mathbb{R}^d\rightarrow\mathbb{R}^d\otimes \mathbb{R}^d$, consider the following SDE
\begin{eqnarray}\label{eq ZXC}
dX_t&=&b(X_t)dt+\sigma(X_{t-})dL_t\nonumber\\
&=&
   b(X_t) dt +\int_{0<|z|\leq 1}\si(X_{t-})z\widetilde{N}(dz,dt)+\int_{|z|> 1}\si(X_{t-})z N(dz,dt).
\end{eqnarray}
Here $N$ and $\widetilde{N}$ are the Poisson random measure and compensated Poisson random measure  associated with $L$, respectively.
In a recent paper \cite{CZZ}, the authors established the following well posedness of the SDE (\ref{eq ZXC}).

\begin{lem}\label{lem singular}
 Assume that $\nu$ satisfies (\ref{eq ZXC IM}) with $\alpha\in(0,2)$. Assume that there
are constants $\beta\in(1-\alpha/2,1]$ and $\Lambda>0$ so that for all $x,y,\xi\in\mathbb{R}^d$,
\begin{eqnarray}\label{eq ZXC con1}
&& |b(x)|\leq \Lambda\text{ and }|b(x)-b(y)|\leq \Lambda|x-y|^\beta,\\
&& \Lambda^{-1}|\xi|\leq |\sigma(x)\xi|\leq \Lambda|\xi|\text{ and }\|\sigma(x)-\sigma(y)\|\leq\Lambda|x-y|.
\end{eqnarray}
Then, there is a unique strong solution $X^x=(X^x(t),t\geq0)$ to (\ref{eq ZXC}) for any initial data $x\in\mathbb{R}^d$.
\end{lem}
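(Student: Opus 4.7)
\textbf{Proof plan for Lemma \ref{lem singular}.} Since the drift $b$ is only H\"older continuous (with index $\beta$ possibly less than $1$ when $\alpha<1$) and the noise is a non-degenerate $\alpha$-stable-like process rather than a true stable process, classical Lipschitz/Picard arguments fail and a Zvonkin-type transformation is the natural route. The plan is to reduce (\ref{eq ZXC}) to an SDE with Lipschitz coefficients via a ``good'' change of variables, and then invoke standard strong well-posedness for SDEs with Lipschitz drift driven by a Poisson random measure.

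First I would build up the sharp regularity theory for the ``frozen'' non-local operator
\[
\mathcal{L}^\nu f(x)=\mathrm{p.v.}\!\int_{\mathbb{R}^d}\bigl(f(x+\sigma(x_0)z)-f(x)\bigr)\nu(dz),
\]
at each frozen point $x_0\in\mathbb{R}^d$. The two-sided stable bound (\ref{eq ZXC IM}) together with the non-degeneracy (\ref{eq 1.4}) and the Lipschitz, uniformly elliptic $\sigma$ gives two-sided heat kernel estimates for $\mathcal{L}^\nu$ of stable type, and, by the perturbation/Levi parametrix method, for the full generator of $X$. From this one extracts Schauder-type a priori estimates: for the resolvent equation
\[
\lambda u-\mathcal{L}^\sigma u-b\cdot\nabla u=f,
\]
the solution $u$ lies in $C^{\alpha+\beta}_b$ with $\|\nabla u\|_\infty\le C/\sqrt{\lambda}$ (in an appropriate sense), where the key inequality $\alpha+\beta>1$ (equivalent to the assumption $\beta>1-\alpha/2$, after a small adjustment coming from the non-local nature of the operator) guarantees that $\nabla u$ is bounded and H\"older continuous.

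Second, I would implement the Zvonkin transform. Pick $\lambda$ large enough so that $\|\nabla u\|_\infty\le 1/2$ where $u$ solves the vector-valued resolvent equation with right-hand side $b$; then $\Phi(x):=x+u(x)$ is a $C^1$ diffeomorphism of $\mathbb{R}^d$. Applying the (non-local) It\^o formula to $Y_t:=\Phi(X_t)$ formally removes the singular drift and produces a transformed SDE
\[
dY_t=\widetilde b(Y_t)\,dt+\widetilde\sigma(Y_{t-})\,dL_t+\text{(non-local compensation term)},
\]
in which $\widetilde b$, $\widetilde\sigma$ and the compensation kernel are all Lipschitz (or locally Lipschitz with linear growth) in $y$. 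Strong existence and pathwise uniqueness for this transformed SDE are then classical, cf.\ the standard Lipschitz theory for jump SDEs. Transferring back by $X_t=\Phi^{-1}(Y_t)$ yields strong existence and pathwise uniqueness for (\ref{eq ZXC}).

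The main obstacle is the supercritical regime $\alpha\in(0,1)$. Here $b$ is ``rougher than the noise'', so the It\^o formula for $\Phi(X_t)$ involves a non-local term whose integrability is delicate, and the constant in the Schauder estimate degenerates unless one tracks the precise order of the singularity in the heat kernel bounds. The condition $\beta>1-\alpha/2$ is exactly what is needed so that $\alpha+\beta-1>\alpha/2$, which buys enough H\"older regularity of $\nabla u$ to control the commutator $[\mathcal L^\sigma,\Phi]$ against the Lipschitz norm of $\sigma$. Beyond this, the fact that $\nu$ is only sandwiched between two stable measures (rather than being stable itself) means the heat kernel estimates cannot be obtained by scaling; one must instead use the Levi parametrix construction starting from the two stable bounds $\nu_1,\nu_2$ and exploit (\ref{eq 1.4}) to get a lower Gaussian-type bound. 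With these heat kernel estimates in hand, the rest of the argument is the standard Zvonkin machinery, and the lemma follows as stated in \cite{CZZ}.
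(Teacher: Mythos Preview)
The paper does not prove this lemma at all: it is quoted verbatim as a result of Chen--Zhang--Zhao \cite{CZZ}, and the sentence preceding it reads ``In a recent paper \cite{CZZ}, the authors established the following well posedness of the SDE (\ref{eq ZXC}).'' So there is no in-paper argument to compare your proposal against; the authors simply import the statement. Your outline is a fair high-level sketch of the Zvonkin-type strategy that \cite{CZZ} actually uses (Schauder estimates for the non-local operator with stable-type heat kernel bounds obtained by a parametrix construction, followed by a Zvonkin transform $\Phi(x)=x+u(x)$ to remove the H\"older drift, and then standard Lipschitz well-posedness for the transformed jump SDE), and you correctly identify the supercritical regime $\alpha\in(0,1)$ as the hard part and the role of the threshold $\beta>1-\alpha/2$. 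One small wobble: the parenthetical ``$\alpha+\beta>1$ (equivalent to the assumption $\beta>1-\alpha/2$, after a small adjustment\ldots)'' is not right as stated, since $\alpha+\beta>1$ and $\beta>1-\alpha/2$ are different conditions; your later reformulation $\alpha+\beta-1>\alpha/2$ is the correct equivalence and is what actually drives the regularity gain for $\nabla u$ in \cite{CZZ}.
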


We are concerned with  the irreducibility of the solutions $\{X^x,x\in\mathbb{R}^d\}$ on $\mathbb{R}^d$.
To obtain the irreducibility, we introduce the following conditions. Let $\{e_i\}_{i=1,2,...,d}$ be an orthonormal basis of $\mathbb{R}^d$.
\begin{itemize}
  \item[(I)]There exist $n\in\mathbb{N}$,
                $f_1,f_2,...,f_n\in \mathbb{S}^{d-1}$, and $\kappa\in(0,1]$, such that
                $\{f_1,f_2,...,f_n\}\subset S_{\vartheta}$, and
  for any $x\in\mathbb{R}^d$, $\inf_{y\in  \mathbb{S}^{d-1}}\sup_{i=1,2,...,n}\frac{\langle\sigma(x)f_i,y\rangle}{|\sigma(x)f_i|}\geq \kappa$.
\end{itemize}

\begin{prop}\label{prop 4.4}
Under the same assumptions of Lemma \ref{lem singular}, and assume that (I) holds, the solutions to (\ref{eq ZXC}) is irreducible in $\RR^d$.
\end{prop}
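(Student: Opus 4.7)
The plan is to apply Theorem \ref{thmmulti} by verifying Assumptions \ref{ass1}, \ref{ass2}, and \ref{ass3}. Assumption \ref{ass1} follows immediately from Lemma \ref{lem singular} together with the standard fact that the unique strong solution of a time-homogeneous Markovian jump SDE with pathwise uniqueness forms a strong Markov family.

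To verify Assumption \ref{ass2}, I would follow the scheme used in the proof of Proposition \ref{Prop monotone}. Let $X^h_1$ denote the solution of (\ref{eq ZXC}) with all jumps in $Z_1=\{|z|>1\}$ removed. Applying It\^o's formula to $\|X_1^{\tilde h}(t)-X_1^h(t)\|^2$ and invoking the H\"older continuity of $b$ (with exponent $\beta>1-\alpha/2$), the Lipschitz continuity of $\sigma$, and the bound $\int_{|z|\leq 1}|z|^2\nu(dz)<\infty$, a stochastic Gronwall argument yields a short-time moment estimate of the form $\EE[\sup_{t\leq T}\|X_1^{\tilde h}(t)-X_1^h(t)\|^{2q}]\leq C\|\tilde h-h\|^{2q}$ for suitable $0<q<1$. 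Chebyshev's inequality combined with the exponential distribution of the first $Z_1$-jump time (as in Propositions \ref{Prop 3} and \ref{Prop 5}) then delivers Assumption \ref{ass2}.

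The heart of the proof is Assumption \ref{ass3}, which I would verify by an explicit iterative construction. Fix $\hbar\neq y$ in $\RR^d$ and $\bar\eta>0$, and set $q_0=\hbar$. Inductively, while $d_{i-1}:=\|q_{i-1}-y\|\geq \bar\eta/4$, apply condition (I) to the unit vector $u_{i-1}=(y-q_{i-1})/d_{i-1}$ to obtain an index $k_i\in\{1,\dots,n\}$ (with $n$ from (I)) satisfying $\langle\sigma(q_{i-1})f_{k_i},u_{i-1}\rangle\geq \kappa|\sigma(q_{i-1})f_{k_i}|\geq \kappa/\Lambda$. Choose $r_i=\min(\kappa d_{i-1}/\Lambda^3,\,1/2)$, set $l_i=r_i f_{k_i}$, and define $q_i=q_{i-1}+\sigma(q_{i-1})l_i$. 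Expanding $\|q_i-y\|^2$ and using $\|\sigma(q)\xi\|\leq \Lambda|\xi|$ yields
\[
\|q_i-y\|^2 \leq d_{i-1}^2-2r_i(\kappa/\Lambda)d_{i-1}+r_i^2\Lambda^2 \leq d_{i-1}^2\bigl(1-\kappa^2/\Lambda^4\bigr)
\]
in the uncapped regime; in the capped regime ($r_i=1/2$, which forces $d_{i-1}$ large) the linear term similarly produces a definite decrease. Hence the iteration terminates after finitely many steps, yielding $n$ jumps with $\|q_n-y\|<\bar\eta/4$. The remaining structural requirements follow from the Lipschitz continuity of $\sigma$, which provides widths $\eta_i,\epsilon_i>0$ such that $(q,l)\mapsto q+\sigma(q)l$ sends $\overline{B(q_{i-1},\eta_{i-1})}\times B(l_i,\epsilon_i)$ into $\overline{B(q_i,\eta_i)}$ with $\eta_n$ arbitrarily small; from the lower bound $\nu\geq \nu_1\in \mathbb{L}_{non}^\alpha$ on $B_1$ together with $f_{k_i}\in S_\vartheta$ and $r_i\in(0,1)$, giving $\nu(B(l_i,\epsilon_i))>0$; and from the fact that the $|l_i|$ are bounded below by a constant depending only on $\bar\eta$, so all jumps lie in a single $Z_{m}$.

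The main obstacle will be Assumption \ref{ass3} in the supercritical regime $\alpha\in(0,1)$, where classical analytic techniques on singular SDEs (heat-kernel estimates, Zvonkin transforms) become very delicate. The point is that the construction above is purely geometric and essentially $\alpha$-independent: condition (I) supplies exactly the uniform-in-state directional accessibility needed to produce both the geometric descent and its stability under perturbations, while the nondegeneracy of $\nu$ on $B_1$ secures the positive jump-mass required by Assumption \ref{ass3}. The H\"older regularity of $b$ enters only through Assumption \ref{ass2}, where the restriction $\beta>1-\alpha/2$ from \cite{CZZ} suffices for a short-time Gronwall argument since we need only a positive probability lower bound, not a sharp moment estimate.
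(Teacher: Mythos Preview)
Your verification of Assumption \ref{ass3} is essentially the same geometric descent argument as the paper's, only with slightly different step-size choices; both use condition (I) to guarantee a uniform angle between $\sigma(q)f_i$ and $y-q$, producing a contractive iteration, and both rely on $\{lf_i:l\in(0,1]\}\subset S_\nu$ for the positivity of $\nu(B(l_i,\epsilon_i))$.

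There is, however, a real gap in your verification of Assumption \ref{ass2}. You propose to apply It\^o's formula to $\|X_1^{\tilde h}-X_1^h\|^2$ and close by stochastic Gronwall, invoking the H\"older continuity of $b$. This does not work when $\beta<1$: the drift contribution is $2\langle b(X_1^{\tilde h})-b(X_1^h),X_1^{\tilde h}-X_1^h\rangle\leq 2\Lambda\|X_1^{\tilde h}-X_1^h\|^{1+\beta}$, and for small differences $\|x\|^{1+\beta}\gg\|x\|^2$, so no inequality of the form $d\|X_1^{\tilde h}-X_1^h\|^2\leq C\|X_1^{\tilde h}-X_1^h\|^2\,dt+dM$ is available. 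Equivalently, $\int_0 u^{-(1+\beta)/2}\,du<\infty$ for $\beta<1$, so Osgood-type uniqueness/stability fails at this level. This is precisely why the well-posedness in \cite{CZZ} already requires Zvonkin-type techniques rather than a Gronwall argument.

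The paper circumvents this by a localization: it fixes $h$, truncates $b$ by a cutoff $\theta$ and freezes $\sigma$ outside a small ball via a modified coefficient $\sigma_{\zeta_0}$, so that the resulting auxiliary SDE (for $Y^\hbar$) falls inside the scope of \cite[Theorem 4.1]{CZZ}. That theorem delivers the stability estimate $\EE\sup_{t\leq T}|Y^{\hbar_1}_t-Y^{\hbar_2}_t|^2\leq C_T|\hbar_1-\hbar_2|^2$ directly, without any Gronwall on the difference. One then shows $X_1^{\tilde h}(t)=h+Y^{\tilde h-h}_t$ up to an exit time from a small ball, and combines this with Chebyshev and a stopping-time argument to obtain (\ref{keyineq}). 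Your outline should replace the naive Gronwall step by this localization and the appeal to \cite[Theorem 4.1]{CZZ}.
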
\label{prop ZXC 02}

%
%

\begin{proof}
We will apply Theorem 2.1 to get the irreducibility.  First we verify  Assumption \ref{ass2}.

 Removing the big jumps in (\ref{eq ZXC}), consider the following SDE:
\begin{equation}\label{sde_1}
dX_1(t)= b(X_1(t)) dt +\int_{0<|z|\leq 1}\si(X_1(t-))z\widetilde{N}(dz,dt), \qquad X_1(0)=x.
\end{equation}
We will prove that for any $h \in \RR^d$ and $\et > 0$, there exist $T_0=T_0(h,\eta)>0$ and $\epsilon_0=\epsilon_0(h,\eta)>0$ small enough such that
\begin{equation}\label{keyineq}
\sup_{\tilde{h} \in B(h, \eps_0)}\PP\Big(\sup_{0\leq t \leq T_0}| X_1^h(t) - X_1^{\tilde{h}}(t)|> \frac{\et}{2}\Big) < \frac{1}{2}.
\end{equation}

We now  fix $h\in \RR^d$ and set  $R=|h|+2$. Let $\theta:\mathbb{R}_+\rightarrow[0,1]$ be a non-increasing $C_0^{\infty}$ function such that
  $1_{[0,R]}\leq \theta \leq 1_{[0,R+2]}$. Let $b_\theta(x)=b(x)\theta(x)$. For any $\zeta\in(0,1]$, define
\begin{eqnarray*}
\sigma_\zeta(x)=\left\{
           \begin{array}{lll}
               \!\!\! \sigma(x+h), &\mbox{$|x|\leq\zeta/2$,}\\
               \!\!\!  \frac{2(\zeta-|x|)}{\zeta}\sigma(\frac{\zeta x}{2|x|}+h)
                        +
                        \frac{2(|x|-\zeta/2)}{\zeta}\sigma(h), &\mbox{$\zeta/2<|x|\leq\zeta$,}\\
               \!\!\! \sigma(h), &\mbox{$|x|>\zeta$.}
           \end{array}
         \right.
\end{eqnarray*}

For $\zeta_0\in(0,1]$ small enough, by the proof of \cite[Theorem 1.1]{CZZ} and
applying \cite[Theorem 4.1]{CZZ}, for any $\hbar\in\RR^d$, the following SDE admits a unique strong solution
\begin{eqnarray*}
Y_t^\hbar=\hbar+\int_0^tb_\theta(Y_s^\hbar+h)ds+\int_0^t\int_{0<|z|\leq 1}\si_{\zeta_0}(Y_{s-}^\hbar)z\widetilde{N}(dz,dt),\ t\geq0.
\end{eqnarray*}
Moreover, from the proof of \cite[Theorem 4.1]{CZZ} we see  that for any $T>0$ there exists a constant $C_T>0$ such that for any $\hbar_1,\hbar_2\in \RR^d$
\begin{eqnarray}\label{eq X and Y 01}
\mathbb{E}(\sup_{t\in[0,T]}|Y_t^{\hbar_1}-Y_t^{\hbar_2}|^2)
\leq
C_T|\hbar_1-\hbar_2|^2.
\end{eqnarray}

Define
$$
\tau^\hbar=\inf\{t>0:|Y_t^\hbar|\geq{\zeta_0}/2\}.
$$
Then for any $\tilde{h}\in B(h, {\zeta_0}/8)$,
\begin{eqnarray}\label{eq tau ex}
\PP(\tau^{\tilde{h}-h}>0)=1,
\end{eqnarray}
\begin{eqnarray}\label{eq X and Y}
X_1^{\tilde{h}}(t)=h+Y_t^{\tilde{h}-h} \text{ on } t\in[0,\tau^{\tilde{h}-h})\ \PP\text{-a.s.},
\end{eqnarray}
and
\begin{eqnarray}\label{eq X and Y}
X_1^{{h}}(t)-X_1^{\tilde{h}}(t)=Y_t^{0}-Y_t^{\tilde{h}-h} \text{ on } t\in[0,\tau^0\wedge\tau^{\tilde{h}-h})\ \PP\text{-a.s.}.
\end{eqnarray}

Define $\tilde{\tau}^0=\inf\{t>0:|Y_t^0|\geq{\zeta_0}/16\}$. Then $\PP(\tilde{\tau}^0>0)=1$, which implies that there
exists $T_0\in(0,1]$ such that
\begin{eqnarray}\label{eq tau ex 1}
\PP(\tilde{\tau}^0>T_0)\geq 11/12.
\end{eqnarray}
Combining the inequality above with (\ref{eq X and Y 01}) and the Chebyshev inequality,
\begin{eqnarray*}
&&\PP(\sup_{t\in[0,T_0]}|Y_t^{\tilde{h}-h}|\leq \zeta_0/8)\nonumber\\
&\geq&
\PP(\{\sup_{t\in[0,T_0]}|Y_t^{\tilde{h}-h}-Y_t^0|\leq \zeta_0/16\} \cap\{\sup_{t\in[0,T_0]}|Y_t^{0}|\leq \zeta_0/16\})\nonumber\\
&\geq&
1-\PP(\sup_{t\in[0,T_0]}|Y_t^{\tilde{h}-h}-Y_t^0|> \zeta_0/16)
-\PP(\sup_{t\in[0,T_0]}|Y_t^{0}|> \zeta_0/16)\nonumber\\
&\geq&
1-\frac{16^2}{\zeta_0^2}C_{T_0}|\tilde{h}-h|^2-\PP(\tilde{\tau}^0\leq T_0)\nonumber\\
&\geq& 11/12-\frac{16^2}{\zeta_0^2}C_{T_0}|\tilde{h}-h|^2.
\end{eqnarray*}
Hence, there exists $\epsilon_1>0$ such that
\begin{eqnarray*}
\inf_{\tilde{h}\in B(h,\epsilon_1)}\PP(\sup_{t\in[0,T_0]}|Y_t^{\tilde{h}-h}|\leq \zeta_0/8)
\geq
 10/12,
\end{eqnarray*}
which implies that
\begin{eqnarray*}
\inf_{\tilde{h}\in B(h,\epsilon_1)}\PP(\tau^{\tilde{h}-h}> T_0)
\geq
 10/12.
\end{eqnarray*}
Combining this inequality with (\ref{eq tau ex 1}), we further have  that for any $\tilde{h}\in B(h,\epsilon_1)$
\begin{eqnarray}\label{eq es 00}
\PP(\tau^0\wedge\tau^{\tilde{h}-h}> T_0)
&\geq&
 1-\PP(\tau^0\leq T_0)-\PP(\tau^{\tilde{h}-h}\leq T_0)\nonumber\\
 &\geq&
 \PP(\tau^{\tilde{h}-h}> T_0)-\PP(\tilde{\tau}^0\leq T_0)\nonumber\\
 &\geq&\frac{3}{4}.
\end{eqnarray}
For the second inequality, we have used $\{\tau^0\leq T_0\}\subseteq\{\tilde{\tau}^0\leq T_0\}$.

For any $\eta>0$, let $\epsilon_0=\epsilon_1\wedge \sqrt{\frac{\eta^2}{32C_{T_0}}}\wedge \frac{\eta}{4}$. Then,
by (\ref{eq X and Y 01}), (\ref{eq X and Y}), the Chebyshev inequality, and (\ref{eq es 00}),  for any $\tilde{h}\in B(h,\epsilon_0)$,
\begin{eqnarray*}
&&\PP\Big(\sup_{0\leq t \leq T_0}| X_1^h(t) - X_1^{\tilde{h}}(t)|> \frac{\et}{2}\Big)\\
&\leq&
\PP\Big(\{\sup_{0\leq t \leq T_0}| X_1^h(t) - X_1^{\tilde{h}}(t)|> \frac{\et}{2}\}
\cap
\{\tau^0\wedge\tau^{\tilde{h}-h}> T_0\}
\Big)
+
\PP(\tau^0\wedge\tau^{\tilde{h}-h}\leq T_0)\\
&\leq&
\PP\Big(\sup_{0\leq t \leq T_0}|Y_t^{0}-Y_t^{\tilde{h}-h}|> \frac{\et}{2}
\Big)
+
\frac{1}{4}\\
&\leq&
\frac{4}{\eta^2}C_{T_0}|\tilde{h}-h|^2+\frac{1}{4}\\
&\leq&
3/8.
\end{eqnarray*}

The proof of (\ref{keyineq}) is complete. Now following the  similar arguments as that in the proof of Proposition \ref{Prop monotone}, we see that Assumption \ref{ass2} holds.

Assumption \ref{ass1} follows from  Lemma \ref{lem singular}.  We now verify Assumption \ref{ass3}.

Note that $\{lf_i,l\in(0,1], i=1,2,...,n\}\subseteq S_\nu$.

For any $\hbar\neq y\in \mathbb{R}^d$ and $\eta>0$, set $q_0=\hbar$. Choose $i_0\in\{1,2,...,n\}$ such that
$$\varpi_0
:=
\frac{\langle\sigma(q_0)f_{i_0},y-q_0\rangle}{|\sigma(q_0)f_{i_0}||y-q_0|}
=
\sup_{i=1,2,...,n}\frac{\langle\sigma(q_0)f_i,y-q_0\rangle}{|\sigma(q_0)f_i||y-q_0|}.
$$
Then $\varpi_0\in[\kappa,1]$. Let $\varrho=|y-q_0|$ and $\theta$ be such that $\cos \theta=\varpi_0$. Define
$$g(r)=(\varrho-r\cos \theta )^2+(r\sin \theta)^2=\varrho^2-2r\varrho\varpi_0+r^2\leq \varrho^2-2r\varrho\kappa+r^2,\ \ r\geq0.$$
Take $r_0\in(0,|\sigma(q_0)f_{i_0}|]\supset(0,\Lambda^{-1}]$ such that $g(r_0)=\inf_{r\in(0,|\sigma(q_0)f_{i_0}|]}g(r)\leq\inf_{r\in(0,\Lambda^{-1}]}g(r)$. Since $\varpi_0\in[\kappa,1]$,
if $\varrho\kappa>\Lambda^{-1}$,  $g(r_0)\leq  g(\Lambda^{-1})\leq\varrho^2-\Lambda^{-2}$; if $\varrho\kappa\in(0,\Lambda^{-1}]$, $g(r_0)= g(\varrho\kappa) =\varrho^2(1-\kappa^2)$.

Now let $q_1=q_0+\frac{\sigma(q_0)f_{i_0}}{|\sigma(q_0)f_{i_0}|}r_0$ and $l_1=\frac{f_{i_0}}{|\sigma(q_0)f_{i_0}|}r_0$. Then $|q_1-y|^2=g(r_0)$. Recursively, we can construct $q_m,l_m,m\geq 2$ until that $|q_m-y|\leq \frac{\eta}{8}$.
Since $\sigma$ is continuous and $\{lf_i,l\in(0,1], i=1,2,...,n\}\subseteq S_\nu$, then Assumption \ref{ass3} holds.

The proof of the proposition is complete.
\end{proof}

\vskip 0.6cm
\section{Applications II: ergodicity}

In Proposition \ref{Pro 4.3}, we obtained the uniqueness of invariant measure for the linear Schr\"odinger equation. In this section, we will see many more interesting examples for which we obtain the ergodicity.
We consider the following stochastic evolution inclusion in a separable Hilbert space $H$ driven by pure jump noise:
\begin{eqnarray}\label{eq Ergodicity 1}
  &&dX(t)-\cA (X(t)) dt \ni dL(t),\ t>0,\\
  &&X(0)=X_0.\nonumber
\end{eqnarray}

Let $H$ be a separable
Hilbert space with dual $H^*$. Suppose that there is another Hilbert space $S$ embedded densely and compactly into $H$.
We thus have a Gelfand triple
$$
S\subset H\cong H^*\subset S^*
$$
and it holds that
$$
_{S^*}\langle v,u\rangle_{S}=\langle v,u\rangle_{H}, \text{ whenever }u\in S, v\in H.
$$
Let $i_S:S\rightarrow S^*$ denote the Riesz map of $S$. We note that the scalar product $\langle \cdot,\cdot\rangle_{S}$ defines a bilinear, $S$-bounded,
$S$-coercive form on $H$. By the Lax-Milgram Theorem there is a linear, positive definite, self-adjoint operator
$T:D(T)\subset H\rightarrow H$ with $D(T^{1/2}) = S$ and $\langle  T^{1/2}u,T^{1/2}v\rangle_{H}=\langle u,v\rangle_{S}$.
We define $J_n=(1+\frac{T}{n})^{-1}, n\in\mathbb{N}$, to be
the resolvent associated to $T$ and $T_n=TJ_n=n(1-J_n)$ to be the Yosida approximation of $T$.

Suppose that $\mathcal{A}:S\rightarrow 2^{S^*}$ satisfies the following conditions: There are constants $C,f,\gamma>0$ such
that
\begin{itemize}
  \item[(A1)] Maximal monotonicity: The map $x\rightarrow \mathcal{A}(x)$ is maximal monotone with non-empty values.
  \item[(A2)] Linear growth:  for all $x\in S$ and $y\in\mathcal{A}(x)$,
$$
\|y\|_{S^*}\leq f+C\|x\|_S,
$$
  \item[(A3)] Weak coercivity in $S$: for all $x\in S$, $y\in\mathcal{A}(x)$ and $n\in\mathbb{N}$,
      $$
      2_S^*\langle y,T_n(x)\rangle_S\geq-\gamma-C\|x\|_S^2.
      $$
\end{itemize}

Suppose that the driving noise $L(t)$, $t\geq0$ satisfies
\begin{itemize}
  \item[(A4)] $L$ is a pure jump L\'evy process taking values in $D(T^{3/2})$, that is, the intensity measure $\nu$ of $L$ satisfies
  $\int_{D(T^{3/2})}\|z\|^2_{D(T^{3/2})}\wedge 1\nu(dz)<\infty$.

  \item[(A5)] $H_0$ is dense in $H$, here $S_\nu$ denotes the support of $\nu$ and
  \begin{equation*}
H_0:=\Big\{\sum_{i=1}^n m_ia_i,\ n,m_1,...,m_n\in\mathbb{N},\ a_i\in S_\nu\Big\}.
\end{equation*}

\end{itemize}

Consider the following deterministic evolution inclusion in $H$:
\begin{eqnarray}\label{eq Ergodicity 1 00}
  &&dX(t)-\cA (X(t)) dt \ni g(t),\ t>0,\\
  &&X(0)=x.\nonumber
\end{eqnarray}
Here $g$ is a $\rm c\grave{a}dl\grave{a}g$ path in $H$.

\begin{defi}
  A solution of \eqref{eq Ergodicity 1 00} is a $\rm c\grave{a}dl\grave{a}g$ function $X\in D([0,\infty);H)$ such that
\begin{eqnarray*}
  X(t)=x+\int_0^t\eta(s)ds+g(t),
\end{eqnarray*}
for all $t\geq0$ as an equation in $S^*$, where $\eta\in L^2_{loc}([0,\infty);S^*)$ such that $\eta(t)\in \mathcal{A}(X(t))$ for a.e. $t\geq 0$.
\end{defi}

\begin{defi}

We say that a function $X\in D([0,\infty);H)$ is a limit solution to \eqref{eq Ergodicity 1 00} with starting point
$x\in H$, if $X(0)=x$ and for each approximation $\{x^\delta\}$ with (eventually) $x^\delta\in S$ and $x^\delta\rightarrow x$ in $H$ the associated solutions
$\{X^\delta\}$ converge to $X$ in $D([0,T];H)$ for any $T>0$.

\end{defi}

\begin{defi}\label{def ergodicity 01}
An $\{\mathbb{F}\}$-adapted stochastic process $X:[0,\infty)\times\Omega\rightarrow H$ is a pathwise (limit) solution to \eqref{eq Ergodicity 1}
with starting point $x\in H$ if for all $\omega\in \Omega$, $X(\omega)$ is a (limit) solution for \eqref{eq Ergodicity 1} with $g(\cdot) = L(\cdot,\omega)$.
\end{defi}

By Theorem 3.2 and the proof of Theorem 2.8  in \cite{GT 2014}, we have the following well-posedness results.
\begin{prop}\label{prop ergo 01}
  Assume that Assumptions (A1)-(A4) hold. We have
  \begin{itemize}
    \item[(1)] For any $X_0\in S$ and $X_0\in\mathcal{F}_0$, there is a unique pathwise solution to \eqref{eq Ergodicity 1} in the sense of
  Definition \ref{def ergodicity 01}.
    \item[(2)] For any $X_0\in X$ and $X_0\in\mathcal{F}_0$, there is a unique pathwise limit solution to \eqref{eq Ergodicity 1} in the sense of
  Definition \ref{def ergodicity 01}. Furthermore, for any $x,y\in H$ and $t\geq0$,
  \begin{eqnarray}\label{eq e prop 01}
 \|X^x(t)-X^y(t)\|_H\leq \|x-y\|_H.
  \end{eqnarray}
 Here $X^x$ and $X^y$ are the limit solutions to  \eqref{eq Ergodicity 1} with initial data $x$ and $y$, respectively.
  \end{itemize}

\end{prop}

Now we state our main results in this subsection.
\begin{thm}\label{thm ergo 1}
  Assume that Assumptions (A1)-(A5) hold. Then there exists at most one invariant measure to the limit solution of the equation
  \eqref{eq Ergodicity 1}.
\end{thm}

\begin{proof}
  The inequality \eqref{eq e prop 01}
implies that the limit solutions $\{X^x,x\in H\}$ satisfy the so-called $e$-property; see \cite{Kapica,KPS 2010}. This together with the irreducibility implies  the uniqueness of the invariant measure (if it exists); see  Theorem 2 in \cite{Kapica}. In the rest of the proof, we will show that $\{X^x,x\in H\}$ is irreducible in $H$.

Since, in general, one can not prove that  the limit solutions $\{X^x,x\in H\}$ satisfy some specific stochastic equations, which plays an important role in the proofs of Theorems \ref{thmmulti} and \ref{thmmulti-additive case}. Hence, we can not apply  Theorems \ref{thmmulti} and \ref{thmmulti-additive case} directly to this case . In the sequel,  with slight modifications to the proof of Theorem \ref{thmmulti}, we will show the irreducibility of $\{X^x,x\in H\}$.

By the Yamada-Watanabe Theorem and Proposition \ref{prop ergo 01}, there exists a measurable map $K:H\times D([0,\infty);H)\rightarrow D([0,\infty);H)$ satisfying
\begin{itemize}
  \item[(B1)] For any  $X_0\in H$ and $X_0\in\mathcal{F}_0$, $K(X_0,L)=X^{X_0}$. Here $X^{X_0}$ is the limit solution of the equation
  \eqref{eq Ergodicity 1} with initial data $X_0$. And for any stopping time $\tau$ and $t\geq0$, $K(K(X_0,L)(\tau),L(\tau+\cdot))(t)=K(X_0,L)(\tau+t)$.
  \item[(B2)] For any $x,y\in H$ and $t\geq 0$, $\|K(x,L)(t)-K(y,L)(t)\|_H\leq \|x-y\|_H$,
  \item[(B3)] Set $G(x,L)(t)=K(x,L)(t)-L(t)-x, t\geq0$, then for any $\omega\in \Omega$, $G(x,L(\omega))\in C([0,\infty);H)$. Hence for any $t\geq0$, $K(x,L)(t)=x+G(x,L)(t)+L(t)$, and $K(x,L)(t)-K(x,L)(t-)=L(t)-L(t-)$.
\end{itemize}
By (B1) and (B2), it is classical that  $\{K(x,L),x\in H\}$ forms a strong Markov process on $H$. Applying (B2) again, using arguments similar to, but much easier than, that in the proof of  Proposition \ref{Prop monotone}, we can see that $\{K(x,L),x\in H\}$ satisfies Assumption \ref{ass2}.
Combining (B3) and Assumption (A5), with only slight modifications to the proof of Theorem \ref{thmmulti}, we obtain the irreducibility of $\{K(x,L),x\in H\}$, that is, the irreducibility of $\{X^x,x\in H\}$.

The proof of Theorem \ref{thm ergo 1} is complete.
\end{proof}

As the application of Theorem \ref{thm ergo 1}, we can obtain the uniqueness of invariant measures of many multi-valued, singular stochastic evolution inclusions. It seems quite difficult to
get these results with other means due to the multi-valued or/and lack of strong dissipativity of the equations.

Here are the examples.

\begin{exmp}[Stochastic singular $\Phi$-Laplace equation in all space dimensions]
\,\,\,\,\,
\begin{itemize}
    \item {\bf Dirichlet boundary conditions in a bounded domain}
\end{itemize}

Let $d\in\NN$, $\Lambda\subset\RR^d$ be a bounded domain with piecewise smooth boundary $\partial\Lambda$ and $-\Delta$ be
the Dirichlet Laplacian on $\Lambda$. We define $|\cdot|$ and $\langle{\cdot},{\cdot}\rangle$ to be the Euclidean norm and the inner-product on $\RR^d$ respectively. Let
  \[
  S:=H^1_0(\Lambda) \subseteq H:=L^2(\Lambda) \subseteq S^*,
  \]
where we endow $S$ with the equivalent norm $\|u\|_{S}:=\|\nabla u\|_{L^2(\Lambda)}$.
Let $\Psi(x)=\widetilde{\Psi}(|x|):\mathbb{R}^d\rightarrow[0,\infty)$ be a radially symmetric function with $\widetilde{\Psi}:\mathbb{R}\rightarrow[0,\infty)$ being even, convex, continuous, non-decreasing and satisfying $\widetilde{\Psi}(0)=0$. Further assume
$$
\Psi(x)\leq C(|x|^2+1),\ \ \forall x\in\mathbb{R}^d
$$
for some constant $C>0$. Let $\Phi:=\partial \Psi:\mathbb{R}^d\rightarrow 2^{\mathbb{R}^d}$.

\begin{itemize}
    \item {\bf Neumann boundary conditions in a bounded domain}
\end{itemize}

Let $d\in\NN$, $\Lambda\subset\RR^d$ be a bounded, open domain with Lipschitz boundary $\partial\Lambda$. We either assume that $\Lambda$ is convex or $\partial\Lambda$
is $C^2$ and convex. Let
  \[
  S:=H^1(\Lambda) \subseteq H:=L^2(\Lambda) \subseteq S^*,
  \]
where  $S$ is normed by $\|u\|^2_{S}:=\|\nabla u\|^2_{L^2(\Lambda)}+\|u\|^2_{L^2(\Lambda)}$. Let $\widetilde{\Psi}(x):\mathbb{R}\rightarrow[0,\infty)$ be an Orlicz function, i.e., $\widetilde{\Psi}$ is even, convex, continuous, non-decreasing and satisfying $\widetilde{\Psi}(0)=0$. Set $\Psi(x):=\widetilde{\Psi}(|x|),\ x\in\mathbb{R}^d$. Assume that for some constant $C>0$
$$
\Psi(x)\leq C(|x|^2+1),\ \ \forall x\in\mathbb{R}^d.
$$
Let $\Phi:=\partial \Psi:\mathbb{R}^d\rightarrow 2^{\mathbb{R}^d}$.

We consider the following stochastic singular $\Phi$-Laplace equation
\begin{equation}\label{SLp-eq}
          dX(t) \in {\rm div} \Phi(\nabla X(t))\,dt + dL(t), t\geq0, \quad
          X_0 =x \in H,
\end{equation}
with Dirichlet or Neumann boundary conditions.

Applying Proposition \ref{prop ergo 01} and Theorem \ref{thm ergo 1},
\begin{prop}
    If the intensity measure $\nu$ of the driving noise $L$ satisfies Assumptions (A4) and (A5).
    Then there exists at most one invariant measure to the limit solution of the equation \eqref{SLp-eq}.

\end{prop}

 The following are several specific examples, and for details, please refer to \cite{GT 2014}.

An explicit example of $\Psi$ in the case of Neumann boundary conditions is given by the singular $p$-Laplacian nonlinearity
$$
\Psi_p(x):=\frac{1}{p}|x|^p,\ x\in\mathbb{R}^d,\ \ p\in[1,2].
$$
Note that when $p=1$  the equation becomes a multi-valued inclusion.

Some examples of $\Phi$ in the case of Dirichlet boundary conditions are as follows.
\begin{enumerate}
  \item Singular $p$-Laplacian:  $\Phi_p(x) := \partial\left(\frac{1}{p}|\cdot|^{p}\right)(x) = |x|^{p-1}\Sgn(x),\ p\in [1,2],$ where
  \[\operatorname{Sgn}(x):=\left\{\begin{aligned}&\dfrac{x}{|x|},&&\;\;\text{if}\;\;x\in\RR^d\setminus\{0\},\\
                 &B(0,1),&&\;\;\text{if}\;\;x=0.
                \end{aligned}\right.\]
  Note that we include the total variation flow, i.e. $p=1$,where the equation becomes a multi-valued inclusion.
  \item Minimal surface flow: \[
  \Phi_{\text{m.s.f.}}(x):=\partial\left(\sqrt{1+|\cdot|^2}\right)(x)=\frac{x}{\sqrt{1+|x|^2}},\quad x\in\RR^d.
  \]
  \item Plastic antiplanar shear deformation:
  \[\Phi_{\text{p.a.s.}}(x):=\partial\left(y\mapsto\begin{cases}\frac{1}{2}|y|^2,&\;\;\text{if}\;\;|y|\le 1,\\ |y|-\frac{1}{2},&\;\;\text{if}\;\;|y|> 1,\end{cases}\right)(x)=\begin{cases}x,&\;\;\text{if}\;\;|x|\le 1,\\ \Sgn(x),&\;\;\text{if}\;\;|x|> 1,\end{cases},\;\; x\in\RR^d.\]
  \item Curve shortening flow:
      \[\Phi_{\arctan}(r) := \partial\left( s\mapsto s\arctan(s)-\frac{1}{2}\log(s^2+1)\right)(r) = \arctan(r),\quad r \in \RR.\]
\end{enumerate}

\end{exmp}

\begin{exmp}[Stochastic generalized fast-diffusion equation] Let $(E,\cB,\mu)$ be a finite measure space and
$(\cE,D(\cE))$ be a symmetric Dirichlet form on $L^2(\mu)$ with associated Dirichlet operator $(\mathcal{L},D(\mathcal{L}))$.
Assume that $\mathcal{L}$ is strictly coercive, self-adjoint, positive-definite and possesses a compact resolvent. Then $D(\cE)$ is a Hilbert space with norm $\|\cdot\|_0:=\cE^{1/2}(\cdot)$ and $D(\cE)\subset L^2(\mu)$ is dense and compact. Let
\[
S := L^2(\mu) \subseteq H:= D(\cE)^\ast\subset S^*,
\]
and $\Psi:\mathbb{R}\rightarrow [0,\infty)$ be an even, convex, continuous function with
$\Psi(0)=0$, subdifferential $\Phi=\partial \Psi:\mathbb{R}\rightarrow 2^{\mathbb{R}}$ and
$$
\Psi(r)\leq C(|r|^2+1_{\{\mu(E)<\infty\}}),\ r\in\mathbb{R}
$$
for some constant $C>0$. We consider the stochastic generalized fast-diffusion equation
\begin{equation}\label{SGFS-eq}
    dX(t) \in \mathcal{L}\Phi(X(t))dt +d L(t), \quad X_0 = x \in H.
\end{equation}

Applying Proposition \ref{prop ergo 01} and Theorem \ref{thm ergo 1}, we obtain the following result.
\begin{prop}
    If the intensity measure $\nu$ of the driving noise $L$ satisfies Assumptions (A4) and (A5).
    Then there exists at most one invariant measure to the limit solution of the equation \eqref{SGFS-eq}.

\end{prop}

Here, an important example of $\mathcal{L}$ is the Laplace operator $\Delta$ on some bounded smooth domain in $\mathbb{R}^d$, $d\in\mathbb{N}$, and $\Phi$ includes two important cases:
  \begin{enumerate}
   \item Fast diffusion equation:
   \[\Phi_p(r) := \partial\left( s\mapsto\frac{1}{p}|s|^{p}\right)(r) = |r|^{p-1}\Sgn(r),\quad p\in [1,2].\] Note that we include the limit case $p=1$ for which $\Phi_1(r) =  \Sgn(r)$, and it is also called sign fast diffusion equation.
   \item Plasma diffusion:
   \[\Phi_{\log}(r) := \partial\Big(s\mapsto(|s|+1)\log(|s|+1)-|s|\Big)(r) = \log(|r|+1)\Sgn(r).\]
  \end{enumerate}
\end{exmp}

\begin{rmk}
In this paper, we focus on the uniqueness of invariant measures. For the existence,   the well-known Krylov-Bogolioubov theorem is a powerful tool. Using the Krylov-Bogolioubov criteria and Theorem  \ref{thm ergo 1} above, we can obtain the ergodicity of the following models:
\begin{itemize}
    \item Stochastic singular $p$-Laplacian:\ $p \in (1,2)$ for $d=1,2$ and $p=1$ for $d=1$;
    \item Stochastic minimal surface flow when $d=1$;
    \item Stochastic plastic antiplanar shear deformation when $d=1$;
    \item Stochastic curve shortening flow;
    \item Stochastic fast diffusion equation: \ $p \in (1,2)$ for $d=1,2$ and $p=1$ for $d=1$.
\end{itemize}
The proof of the existence of invariant measures is similar to that in the Gaussian case.   We refer the reader to \cite{EsRen,Liu-T}.
\end{rmk}


\noindent {\bf Acknowledgement.}
This work is partially supported by the National Key R\&D program of China (No.2022YFA1006001), the National Natural Science Foundation of China (Nos. 12131019, 12371151,
12426655). Jianliang Zhai's research is also supported by the School Start-up Fund (USTC) KY0010000036 and the Fundamental Research Funds for the Central Universities (Nos. WK3470000031, WK0010000081).

\end{document}